\definecolor{red}{rgb}{0.9,0,0}
\definecolor{green}{rgb}{0,0.9,0}
\definecolor{blue}{rgb}{0,0,0.9}
\newtheorem{theorem}{Theorem}[section]
\newtheorem{proposition}{Proposition}[section]
\newtheorem{lemma}{Lemma}[section]
\newtheorem{corollary}{Corollary}[section]
\newtheorem{remark}{Remark}[section]
\newtheorem{definition}{Definition}[section]
\newtheorem{assumption}{Assumption}[section]
\begin{document}

\title{\bf Perturbation analysis of a class of composite optimization problems}

\author{Peipei Tang\thanks{School of Computer and Computing Science, Hangzhou City University, Zhejiang Key Laboratory of Big Data Intelligent Computing, Hangzhou 310015, China ({\tt Email:tangpp@hzcu.edu.cn}). Research of Peipei Tang was supported by Zhejiang Provincial Natural Science Foundation of China
under Grant No. LMS26A010022.},
Chengjing Wang
\thanks{{\bf Corresponding author}, School of Mathematics, Southwest Jiaotong University, No.999, Xian Road, West Park, High-tech Zone, Chengdu 611756, China ({\tt Email:renascencewang@hotmail.com}). Research
of Chengjing Wang was supported partly by National Natural Science Foundation of China under Grant No.
U21A20169, partly by Zhejiang Provincial Natural Science Foundation
of China under Grant No. LTGY23H240002 and LMS26A010022.    } 
}

%\date{}
\maketitle

%\date{September 30, 2009; Revised, March 08, 2010}

%%
%%
\begin{abstract}
In this paper, we conduct a perturbation analysis of a class of composite optimization problems, providing a unified framework for addressing both theoretical and algorithmic aspects of constrained optimization problems. By exploiting the properties of the graphical derivative,  limiting/Mordukhovich coderivative, and regular coderivative, we establish a strong second-order sufficient condition (SSOSC) for the composite optimization problem. This condition represents a novel contribution that cannot be directly inferred from existing definitions tailored to specific problems. Under certain mild assumptions on the objective function, it is demonstrated that SSOSC together with the nondegeneracy condition, the nonsingularity of Clarke's generalized Jacobian of the nonsmooth system at a Karush-Kuhn-Tucker (KKT) point, and strong regularity of the KKT point are equivalent. These findings provide a robust analytical tool for evaluating the stability of the KKT point.
\end{abstract}

\begin{keywords}
composite optimization problem, limiting/Mordukhovich coderivative,  strong regularity, strong second-order sufficient condition, second-order variational function
\end{keywords}

\textbf{MSC codes:} Primary 90C31, 90C26; Secondary 49J52, 49J53

\section{Introduction}
In this paper, we consider the following general class of composite optimization problems:
\begin{equation*}
	(P)\quad\quad\quad\min_{x\in\mathcal{R}^{n}}g(F(x)),
\end{equation*}
where $g:\mathcal{R}^{m}\rightarrow(-\infty,+\infty]$ is a proper lower semicontinuous (l.s.c.) convex function and $F:\mathcal{R}^{n}\rightarrow\mathcal{R}^{m}$ is a $\mathcal{C}^{2}$-smooth mapping. Problem $(P)$ is equivalent to the following problem
\begin{equation}\label{general-prob-equivalent}
	\min_{(x,c)\in\mathcal{R}^{n}\times\mathcal{R}}\quad \{c\, |\, (F(x),c)\in\operatorname{epi}g \}
\end{equation}
with its corresponding Lagrangian function as
\begin{equation*}
\mathcal{L}((x,c);(\mu,\gamma))=\langle\mu,F(x)\rangle+c(1+\gamma),\, (\mu,\gamma)\in\mathcal{N}_{\operatorname{epi}g}(F(x),c).
\end{equation*}
Given that the minimum of the Lagrangian function with respect to $c$ yields $-\infty$ if $\gamma\neq -1$, the dual of problem $(P)$ is formulated by
\begin{equation*}%\label{general-prob-dual}
	\max_{\mu\in\mathcal{R}^{m}}\{-g^{*}(\mu)+\min_{x\in\mathcal{R}^{n}}\langle\mu,F(x)\rangle\}.
\end{equation*}
The Karush-Kuhn-Tucker (KKT) condition for problem $(P)$ takes the following form
\begin{equation}\label{eq:comp-prob-kkt}
	F'(x)^{*}\mu=0,\quad 0\in F(x)-\partial g^{*}(\mu),
\end{equation}
which can be equivalently rewritten as the generalized equation: 
\begin{equation}\label{eq:kkt-generalized-equation}
	R(x,\mu):=\left[\begin{array}{c}
		F'(x)^{*}\mu\\
		\mu-\operatorname{Prox}_{g^{*}}(F(x)+\mu)
	\end{array}\right]=0.
\end{equation}

In recent decades, substantial research efforts, such as \cite{Bonnans2005,Mohammadi2022,Mordukhovich2018,MordukhovichNghia2015,Mordukhovich2017}, have been dedicated to analyzing the stability of composite optimization problems, where the objective function is formulated as $\varphi(x):=\varphi_{0}(x)+g(\Phi(x))$, where  $\varphi_{0}:\mathcal{R}^{n}\rightarrow\mathcal{R}$ and $\Phi:\mathcal{R}^{n}\rightarrow\mathcal{R}^{m}$ are smooth mappings. This objective function can be reformulated as the composition of $\tilde{g}(c,x):=c+g(x)$ and $F(x):=(\varphi_{0}(x),\Phi(x))$. Notably, the classical constrained optimization problem
\begin{equation}\label{classical-constrained-problem}
	\left.\begin{array}{c}\min\limits_{x\in\mathcal{R}^{n}} \varphi_{0}(x)\\
		\mbox{s.t.}\quad \Phi(x)\in\Theta,\end{array}\right.
\end{equation}
where $\Theta$ is a convex subset of $\mathcal{R}^{m}$, represents a special case of the composite optimization problem $(P)$ with $F(x):=(\varphi_{0}(x),\Phi(x))$ and $\tilde{g}(c,x):=c+\delta_{\Theta}(x)$. The proposed model $(P)$ provides a comprehensive framework that accommodates a variety of optimization problems, including linear programming, nonlinear programming, nonlinear second-order cone programming, and nonlinear semidefinite programming, among others. This unified approach significantly advances the theoretical understanding and practical applications of composite optimization.

As we know, substantial efforts have been dedicated to studying the stability analysis of solution maps in parameter-dependent optimization problems. Numerous monographs have been published that provide conditions to ensure various stability properties of these solution maps. Strong regularity, a fundamental concept that originated from the landmark work of \cite{Robinson1980}, is crucial for describing the stability of the solution mapping associated with the following generalized equation:
\begin{equation}\label{eq:generalized-equation}
    0\in\phi(x)+\mathcal{M}(x),
\end{equation}
where  $\phi:\mathcal{R}^{t}\rightarrow\mathcal{R}^{t}$ and $\mathcal{M}:\mathcal{R}^{t}\rightrightarrows\mathcal{R}^{t}$ is a set-valued mapping. Characterizing strong regularity of the canonically perturbed KKT system at the solution point of a given optimization problem has become an important topic over the past few decades. For nonlinear programming problems, Robinson \cite{Robinson1980} demonstrated that the strong second-order sufficient condition (SSOSC) combined with the linear independence constraint qualification  at the solution point serves as a sufficient condition for strong regularity of the canonically perturbed KKT system. Mordukhovich and Sarabi \cite{Mordukhovich2017} established the equivalence between full stability of local minimizers and strong regularity of the associated KKT system for a class of composite optimization problems that involve convex piecewise linear functions. For the general constrained optimization problem \eqref{classical-constrained-problem}, Mordukhovich et al. \cite{MordukhovichNghia2015} established the equivalence among strong regularity, Lipschitzian strong stability, Lipschitzian full stability, and the second-order subdifferential condition at a local minimizer, under the reducibility and nondegeneracy conditions specified in their Theorem 5.6. Additionally, as noted in \cite{Clarke1976}, Clarke's generalized Jacobian of the KKT system serves as an important tool for characterizing the stability of the KKT system at a given solution point. Characterizations of strong regularity in terms of the second-order optimality conditions and the nonsingularity of Clarke's generalized Jacobian of the KKT system for second-order cone programming problems and nonlinear semidefinite programming problems are provided in \cite{Bonnans2005} and \cite{Sun2006}, respectively. However, to the best of our knowledge, such characterizations for the general composite optimization problem $(P)$ remain unknown.

Moreover, the second-order conditions, particularly the second-order sufficient condition (SOSC) and SSOSC, play a pivotal role in establishing perturbation analysis for optimization problems. For specific instances of composite optimization problem $(P)$, such as when the function $g$ is the indicator function of the positive semidefinite cone, the sigma term can be simplified to derive an explicit expression for SOSC. This allows for the extension of the critical cone to its affine hull, thereby establishing SSOSC. However, for general composite optimization problem $(P)$, directly defining SSOSC by replacing the critical cone with its affine hull is not feasible, as the domain of the sigma term in SOSC is restricted to the critical cone. Formulating an expression for SSOSC that naturally extends SOSC while enabling meaningful analysis presents significant challenges. In this paper, we propose a definition of SSOSC (as detailed in Section \ref{sec:SRCQ-SOSC-SSOSC-nondegeneracy}) for composite optimization problem $(P)$. We also explore fundamental properties related to function $g$ and its proximal operator $\operatorname{Prox}_{g}$. Based on the defined SSOSC and a set of mild assumptions—including  the $\mathcal{C}^{2}$-cone reducibility of $g$, and other verifiable conditions—we demonstrate the equivalence among the following conditions: the combination of SSOSC and the nondegeneracy condition, the nonsingularity of Clarke's generalized Jacobian of the mapping $R$ at a solution $(x,\mu)$ of the KKT system \eqref{eq:comp-prob-kkt}, and strong regularity of the KKT point $(x,\mu)$. Notably, the class of $\mathcal{C}^{2}$-cone reducible functions is extensive, encompassing all indicator functions of $\mathcal{C}^{2}$-cone reducible sets, the nuclear norm function and the matrix spectral function.

The methodology employed in this paper draws upon established frameworks from \cite{Bonnans2000, DontchevandRockafellar2014, Mordukhovich2006, Mordukhovich2018, Rockafellar1998}. It systematically incorporates the theoretical underpinnings of set-valued mapping derivatives, particularly the graphical derivative, limiting/Mordukhovich and regular coderivatives. Building on these foundational concepts, which have been extensively validated in variational analysis, optimization theory, and control systems, this paper establishes a robust framework for defining SSOSC tailored for composite optimization problems of the form $(P)$.

The remaining sections of this paper are organized as follows. Section \ref{sec:preliminaries} reviews some fundamental concepts from variational analysis and generalized differentiation, including the second-order subderivative and derivatives of set-valued mappings that are extensively employed throughout the paper. Section \ref{sec:prop-g-prox_g} establishes essential properties of function $g$ and its associated proximal mapping, which are instrumental for subsequent analysis. Section \ref{sec:SRCQ-SOSC-SSOSC-nondegeneracy} introduces SOSC, SSOSC and constraint qualification conditions, with particular emphasis on the definition of  SSOSC, which serves as the foundation for establishing the key equivalence conditions in perturbation analysis. Section \ref{sec:equiv-nonsignularity-SSOSC} presents the equivalent conditions for the nonsingularity of Clarke's generalized Jacobian of the nonsmooth system at a KKT point. Finally, Section \ref{sec:Conclusion} presents our conclusions and future research.

Our notation is mostly standard. For a given point $x\in\mathcal{R}^{m}$ and $\varepsilon>0$, the closed ball centered at $x$ with radius $\varepsilon$ is defined as $\mathbb{B}(x,\varepsilon):=\{u\in\mathcal{R}^{m}\, |\, \|u-x\|\leq\varepsilon\}$. Given a set $C\subseteq\mathcal{R}^{m}$,  the indicator function $\delta_{C}$ of the set $C$ is defined by $\delta_{C}(x)=0$ if $x\in C$, and $\delta_{C}(x)=+\infty$ otherwise. For a point $x\in\mathcal{R}^{m}$, $\operatorname{dist}(x,C)$ represents the distance from $x$ to the set $C$, while $\operatorname{\Pi}_{C}(x)$ denotes the projection of $x$ onto $C$. For a sequence $\{x^{k}\}$,  $x^{k}\xrightarrow{C}x$ means that $x^{k}\rightarrow x$ with $x^{k}\in C$. The smallest cone containing $C$ is called the positive hull of $C$, defined by $\operatorname{pos}C=\{0\}\cup\{\lambda x\, |\, x\in C,\, \lambda>0\}$. If $C=\emptyset$, then $\operatorname{pos}C=\{0\}$ and if $C\neq\emptyset$, then $\operatorname{pos}C=\{\lambda x\, |\, x\in C,\, \lambda\geq0\}$. The convex hull of $C$, denoted by $\operatorname{conv}C$, is the intersection of all convex sets containing $C$. For a nonempty closed convex cone $C$, the polar cone $C^\circ$ is defined as $C^{\circ}=\{y\,|\, \langle x,y\rangle\leq 0,\,\forall\, x\in C\}$.  The lineality space of $C$ is  $\operatorname{lin}C:=C\cap(-C)$, representing the largest subspace contained in $C$, and the affine space $\operatorname{aff}C:=C-C$ is the smallest subspace containing $C$. If $C$ is a nonempty closed convex subset of $\mathcal{R}^{m}$ with $0\in C$, the gauge of $C$ is the function $\gamma_{C}$ defined by $\gamma_{C}(x):=\inf\{\lambda\geq0\,|\,x\in\lambda C\}$. For a given matrix $A\in\mathcal{R}^{m\times n}$, the null space of $A$ is $\operatorname{ker}A:=\{x\in\mathcal{R}^{n}\,|\,Ax=0\}$.

\section{Preliminaries}\label{sec:preliminaries}

In this section, we introduce fundamental concepts of variational analysis and generalized differentiation that will be extensively used in the remainder of the paper. We adopt the standard notation and terminology consistent with the existing literature, and refer readers to the monographs \cite{Bonnans2000, Mordukhovich2006, Mordukhovich2018, Mordukhovich2024, Rockafellar1998} for a comprehensive treatment of these topics.

Consider a real-valued function $r:\mathcal{R}^{m}\rightarrow\overline{\mathcal{R}}$. Its epigraph $\operatorname{epi}r$ is defined as
$
\operatorname{epi}r:=\{(x,c)\, |\, x\in\operatorname{dom}r,\ c\in\mathcal{R},\ r(x)\leq c\}$, 
where $\operatorname{dom}r:=\{x\in\mathcal{R}^{m}\,|\,r(x)<+\infty\}$. 
The conjugate function $r^{*}$ at $x\in\mathcal{R}^{m}$ is given by
$
r^{*}(x):=\sup\limits_{u\in\operatorname{dom}r}\{\langle x,u\rangle-r(u)\}$.
The extended function $r$ is said to be proper if its epigraph is nonempty and lacks vertical lines. This occurs when there exists at least one $x\in\mathcal{R}^{m}$ such that $r(x)<+\infty$ and $r(x)>-\infty$ for all $x\in\mathcal{R}^{m}$. Equivalently, $r$ is proper if and only if $\operatorname{dom}r$ is nonempty and $r$ is finite on its domain; otherwise, $r$ is improper. For a proper l.s.c. function $r$, recall the proximal mapping $\operatorname{Prox}_{\sigma r}$ defined by
\begin{equation*}
	\begin{aligned}	\operatorname{Prox}_{\sigma r}(x)&:=\mathop{\operatorname{argmin}}_{u\in\mathcal{R}^{m}}\{r(u)+\frac{1}{2\sigma}\|u-x\|^{2}\},
	\end{aligned}
\end{equation*}
where $\sigma>0$ is a parameter.
If $r$ is a proper l.s.c. convex function, the proximal mapping $\operatorname{Prox}_{\sigma r}$ is single-valued, Lipschitz continuous, and satisfies the Moreau identity (see e.g., \cite[Theorem~2.26]{Rockafellar1998},  \cite[Theorem~31.5]{Rockafellar1970}) 
\begin{equation}\label{eq:moreau-identity}
\operatorname{Prox}_{\sigma r}(x)+\sigma\operatorname{Prox}_{\sigma^{-1}r^{*}}(\sigma^{-1}x)=x,\; \forall\, x\in\mathcal{R}^{m}.
\end{equation}

For the extended real-valued function $r$ and $x\in\operatorname{dom}r$, the regular/Fr\'{e}chet subdifferential (also known  as the presubdifferential or viscosity
subdifferential) of $r$ at $x$ is defined as the set 
\begin{equation*}
	\widehat{\partial}r(x):=\{v\in\mathcal{R}^{m}\,|\, r(x')\geq r(x)+\langle v,x'-x\rangle+o(\|x'-x\|)\}.
\end{equation*}
The limiting/Mordukhovich subdifferential (also referred to as the basic or general subdifferential) of $r$ at $x$, denoted by $\partial r(x)$, is given by
\begin{equation*}
	\partial r(x):=\{v\in\mathcal{R}^{m}\,|\, \exists\, x^{k}\rightarrow x\ \mbox{with}\ r(x^{k})\rightarrow r(x),\, v^{k}\in\widehat{\partial}r(x^{k})\ \mbox{with}\ v^{k}\rightarrow v\}.
\end{equation*}
The singular subdifferential (also known as the horizon subdifferential) of $r$ at $x$ is defined by
\begin{equation*}
	\partial^{\infty}r(x):=\{v\in\mathcal{R}^{m}\,|\,\exists\, t_{k}\downarrow 0,\ x^{k}\rightarrow x\ \mbox{with}\, r(x^{k})\rightarrow r(x),\,v^{k}\in\widehat{\partial}r(x^{k}),\, t_{k}v^{k}\rightarrow v\}.
\end{equation*}
According to \cite[Theorem~1.93]{Mordukhovich2006}, if $r$ is a convex function, the regular/Fr\'{e}chet subdifferential and the limiting/Mordukhovich subdifferential of $r$ at $x$ coincide with the set of subgradients of $r$ at $x$ in the sense of convex analysis. A function $r$ is said to be subdifferentiable at $x$ if $\partial r(x)\neq\emptyset$.

Let $x\in\mathcal{R}^{m}$ be a point where $r(x)$ is finite. The lower and upper directional epiderivatives of $r$ are defined as follows:
\begin{equation*}
	\begin{aligned}
	r^{\downarrow}_{-}(x,d):=\liminf_{{t\downarrow 0}\atop{d'\rightarrow d}}\frac{r(x+td')-r(x)}{t},\
	r^{\downarrow}_{+}(x,d):=\sup_{\{t_{n}\}\in\operatorname{\Sigma}}\liminf_{{n\rightarrow\infty}\atop{d'\rightarrow d}}\frac{r(x+t_{n}d')-r(x)}{t_{n}},
	\end{aligned}
\end{equation*}
where $\operatorname{\Sigma}$ denotes the set of positive real sequences $\{t_{n}\}$ converging to zero. The function $r$ is said to be directionally epi-differentiable at $x$ in a direction $d$ if $r^{\downarrow}_{-}(x,d)=r^{\downarrow}_{+}(x,d)$. Every convex function is directionally epi-differentiable at all points of its domain. Assuming $r(x)$ and the respective directional epiderivatives $r^{\downarrow}_{-}(x,d)$ and $r^{\downarrow}_{+}(x,d)$ are finite, define the lower and upper second-order epiderivatives as
\begin{equation*}
	\begin{aligned}
	r^{\downdownarrows}_{-}(x;d,w)&:=\liminf_{{t\downarrow 0}\atop{w'\rightarrow w}}\frac{r(x+td+\frac{1}{2}t^{2}w')-r(x)-tr^{\downarrow}_{-}(x,d)}{\frac{1}{2}t^{2}},\\
	r^{\downdownarrows}_{+}(x;d,w)&:=\sup_{\{t_{n}\}\in\operatorname{\Sigma}}\liminf_{{n\rightarrow\infty}\atop{w'\rightarrow w}}\frac{r(x+t_{n}d+\frac{1}{2}t_{n}^{2}w')-r(x)-t_{n}r^{\downarrow}_{+}(x,d)}{\frac{1}{2}t_{n}^{2}},
	\end{aligned}
\end{equation*}
respectively. The function $r$ is parabolically epi-differentiable at $x$ in the direction $d$ if $r^{\downdownarrows}_{-}(x;d,\cdot)=r^{\downdownarrows}_{+}(x;d,\cdot)$.
Recall that the second-order subderivative of $r$ at a feasible point $x\in\operatorname{dom}r$ for $u\in\partial r(x)$ is defined by
\begin{equation*}
	\begin{aligned}
	d^{2}r(x,u)(d):=\liminf_{{t\downarrow 0}\atop{d'\rightarrow d}}\Delta_{t}^{2}r(x,u)(d'),\ 
	\Delta_{\tau}^{2}r(x,u)(d):=\frac{r(x+\tau d)-r(x)-\tau\langle u,d\rangle}{\frac{1}{2}\tau^{2}}.
	\end{aligned}
\end{equation*}
The function $r$ is called twice epi-differentiable at $x$ for $u$ if $\Delta_{t}^{2}r(x,u)$ epi-converges to $d^{2}r(x,u)$ as $t\downarrow 0$.
The second-order subderivative $d^{2}r(x,u)$ is l.s.c., as it arises from the lower epi-limit of the corresponding ratio function. By \cite[Proposition~13.5]{Rockafellar1998}, it is positively homogeneous of degree 2 with $\operatorname{dom}d^{2}r(x,u)\subseteq\{d\, |\, r^{\downarrow}_{-}(x,d)=\langle u,d\rangle\}$. Furthermore, if $r$ is a proper l.s.c. convex function, then $\frac{1}{2}d^{2}r(x,u)$ is convex and proper, and its conjugate function is $\frac{1}{2}d^{2}r^{*}(u,x)$, when $x\in\operatorname{dom}r$ and $u\in\partial r(x)$. One may see \cite[Theorems~2.2 and~2.4]{Rockafellar1990} for more details.

\begin{definition}
	Let $r:\mathcal{R}^{m}\rightarrow\overline{\mathcal{R}}$ be an extended real-valued function, $x\in\operatorname{dom}r$, and $u\in\partial r(x)$. We say that $r$ is parabolically regular at $x$ in the direction $d$ for $u$, if 
	\begin{equation*}
	\inf\limits_{w\in\mathcal{R}^{m}}\{r^{\downdownarrows}_{-}(x;d,w)-\langle u,w\rangle\}=d^{2}r(x,u)(d).
	\end{equation*}
\end{definition}

Given a set $C\subseteq\mathcal{R}^{m}$ and a point $x\in C$,
the (Bouligand) tangent/contingent cone to $C$ at $x$ is defined as
\begin{equation*}
	\mathcal{T}_{C}(x):=\limsup_{t\downarrow 0}\frac{C-x}{t}=\{y\in\mathcal{R}^{m}\, |\, \exists\ t_{k}\downarrow 0,\ y^{k}\rightarrow y\ \mbox{with}\ x+t_{k}y^{k}\in C\}.
\end{equation*}
The corresponding regular tangent cone to $C$ at $x$ and the second-order tangent set to $C$ at $x$ for a vector $w\in\mathcal{T}_{C}(x)$ are given by
\begin{equation*}
	\widehat{\mathcal{T}}_{C}(x):=\liminf_{{ x'\xrightarrow{C}x}\atop{t\downarrow 0}}\frac{C-x'}{t}\;\mbox{and}\;\mathcal{T}_{C}^{2}(x,w):=\limsup_{t\downarrow 0}\frac{C-x-tw}{\frac{1}{2}t^{2}},
\end{equation*}
respectively. 
The regular/Fr\'{e}chet normal cone to $C$ at $x$ admits the equivalent characterization 
\begin{equation*}
	\widehat{\mathcal{N}}_{C}(x):=\left\{y\in\mathcal{R}^{m}\, \left|\, \limsup_{x'\xrightarrow{C} x}\frac{\langle y,x'-x\rangle}{\|x'-x\|}\leq 0\right.\right\}=\mathcal{T}_{C}^{\circ}(x).
\end{equation*}
The limiting/Mordukhovich normal cone to $C$ at $x$ is defined by
\begin{equation*}
	\mathcal{N}_{C}(x):=\limsup_{x'\xrightarrow{C} x}\widehat{\mathcal{N}}_{C}(x'),
\end{equation*}
which is equivalent to Mordukhovich's original definition \cite{MORDUKHOVICH1976}, namely, 
\begin{equation*}
	\mathcal{N}_{C}(x):=\limsup_{x'\rightarrow x}\{\operatorname{pos}(x'-\operatorname{\Pi}_{C}(x'))\},\; \mathcal{N}_{C}(x)^{\circ}=\widehat{\mathcal{T}}_{C}(x),
\end{equation*}
provided that $C$ is locally closed at $x$.
When $C$ is convex, both the tangent cone $\mathcal{T}_{C}$ and the regular tangent cone $\widehat{\mathcal{T}}_{C}$ coincide with the classical tangent cone, while both the regular/Fr\'{e}chet normal cone $\widehat{\mathcal{N}}_{C}$ and the limiting/Mordukhovich normal cone $\mathcal{N}_{C}$ reduce to the classical normal cone of convex analysis. The notions $\mathcal{T}_{C}(x)$ and $\mathcal{N}_{C}(x)$ are commonly used to denote these cones, respectively.

\begin{definition}
	Let $C\subseteq\mathcal{R}^{m}$ and $K\subseteq\mathcal{R}^{t}$ be convex closed sets. We say that the set $C$ is $\mathcal{C}^{\ell}$-reducible to the set $K$, at a point $x\in C$, if there exist a neighborhood $N$ at $x$ and an $\ell$-times continuously differentiable mapping $\Xi:N\rightarrow\mathcal{R}^{t}$ such that
	$\Xi'(x):\mathcal{R}^{m}\rightarrow\mathcal{R}^{t}$ is onto and $C\cap N=\{x\in N\, |\, \Xi(x)\in K\}$.	We say that the reduction is pointed if the tangent cone $\mathcal{T}_{K}(\Xi(x))$ is a pointed cone. If, in addition, the set $K-\Xi(x)$ is a pointed closed convex cone, we say that $C$ is $\mathcal{C}^{\ell}$-cone reducible at $x$. We can assume without loss of generality that $\Xi(x)=0$.
\end{definition}	

We say a closed proper convex function $r:\mathcal{R}^{m}\rightarrow(-\infty,+\infty]$ is $\mathcal{C}^{2}$-cone reducible at $x$, if the set $\operatorname{epi}r$ is $\mathcal{C}^{2}$-cone reducible at $(x,r(x))$. Furthermore, $r$ is said to be $\mathcal{C}^{2}$-cone reducible if it is $\mathcal{C}^{2}$-cone reducible at every $x\in\operatorname{dom}r$. %From Proposition 3.136 of \cite{Bonnans2000}, $r$ is outer second order regular at $x$ if $r$ is $\mathcal{C}^{2}$-cone reducible at $x$.

%Recall that a function $r:\mathcal{R}^{m}\rightarrow\overline{R}$ is locally Lipschitz continuous around $\bar{x}\in\operatorname{dom}r$ relative to some set $\Omega\subseteq\operatorname{dom}r$ if there exists a constant $\kappa\in(0,+\infty)$ and a neighborhood $\mathcal{U}$ of $\bar{x}$ such that
%\begin{align*}
%	|r(x)-r(y)|\leq\kappa\|x-y\|,\, \forall\, x,y\in\Omega\cap\mathcal{U}.
%\end{align*}

For a set-valued mapping $S:\mathcal{R}^{m}\rightrightarrows\mathcal{R}^{n}$, the range of $S$ is defined as the set $\operatorname{rge}S:=\{u\in\mathcal{R}^{n}\,|\,\exists\,x\in\mathcal{R}^{m}\ \mbox{with}\ u\in S(x)\}$, while the domain of $S$ is given by $\operatorname{dom}S:=\{x\in\mathcal{R}^{m}\,|\, S(x)\neq\emptyset\}$. The inverse mapping $S^{-1}:\mathcal{R}^{n}\rightrightarrows\mathcal{R}^{m}$ is defined by $S^{-1}(u):=\{x\in\mathcal{R}^{m}\,|\,u\in S(x)\}$. Specifically, given a point  $x\in\operatorname{dom}S$, the graphical derivative of $S$ at $x$ for any $u\in S(x)$ is the mapping $DS(x,u):\mathcal{R}^{m}\rightrightarrows\mathcal{R}^{n}$ defined by
\begin{equation*}
	z\in DS(x,u)(w)\ \Longleftrightarrow\ (w,z)\in\mathcal{T}_{\operatorname{gph}S}(x,u),
\end{equation*}
where $\operatorname{gph}S$ is the graph of $S$, a subset of $\mathcal{R}^{m}\times\mathcal{R}^{n}$, notably $\operatorname{gph}S:=\{(x,y)\, |\, y\in S(x)\}$. The limiting/Mordukhovich coderivative mapping $D^{*}S(x,u):\mathcal{R}^{n}\rightrightarrows\mathcal{R}^{m}$ is defined by
\begin{equation*}
	v\in D^{*}S(x,u)(y)\ \Longleftrightarrow\ (v,-y)\in\mathcal{N}_{\operatorname{gph}S}(x,u).
\end{equation*}
Similarly, the regular coderivative $\widehat{D}^{*}S(x,u):\mathcal{R}^{n}\rightrightarrows\mathcal{R}^{m}$ is defined by
\begin{equation*}
	v\in \widehat{D}^{*}S(x,u)(y)\ \Longleftrightarrow\ (v,-y)\in\widehat{\mathcal{N}}_{\operatorname{gph}S}(x,u).
\end{equation*}
When $S$ is single-valued at $x$, i.e., $S(x)=\{u\}$, the notations $DS(x,u)$, $D^{*}S(x,u)$ and $\widehat{D}^{*}S(x,u)$ are simplified to $DS(x)$, $D^{*}S(x)$ and $\widehat{D}^{*}S(x)$, respectively.  Moreover, if $S$ is single-valued and locally Lipschitz continuous at $x$, then $D^{*}S(x)(d)=\partial\langle d,S\rangle(x)$ and $\widehat{D}^{*}S(x)(d)=\widehat{\partial}\langle d,S\rangle(x)$ as stated in  \cite[Proposition~9.24]{Rockafellar1998}. One may see for example, \cite[Proposition~8]{Ioffe1984ApproximateSA} for additional details. 
The graphical derivative mapping $DS(x,u)$ and the limiting/Mordukhovich coderivative mapping $D^{*}S(x,u)$ exhibit simple behaviors when taking the inverse of the mapping $S$:
\begin{align}
	z\in DS(x,u)(w)\,&\Longleftrightarrow\, w\in D(S^{-1})(u,x)(z),\label{eq:inverse-of-DS}\\
	v\in D^{*}S(x,u)(y)\,&\Longleftrightarrow\, -y\in D^{*}(S^{-1})(u,x)(-v).
	\label{eq:inverse-of-DS&DS*}
\end{align}

%\begin{definition}
%	The multifunction $S:\mathcal{R}^{m}\rightrightarrows\mathcal{R}^{n}$ is said to be calm at $\bar{z}$ for $\bar{w}$ if there exists $\kappa\geq0$ along
%	with $\varepsilon>0$ and $\delta>0$ such that for all $z\in\mathbb{B}(\bar{z},\varepsilon)$,
%	\begin{align*}
%		S(z)\cap\mathbb{B}(\bar{w},\delta)\subseteq S(\bar{z})+\kappa\|z-\bar{z}\|\mathbb{B}_{\mathcal{R}^{n}}.
%	\end{align*}
%\end{definition}	
\begin{definition}
	The multifunction $S:\mathcal{R}^{m}\rightrightarrows\mathcal{R}^{n}$ is said to be metrically subregular at $\bar{z}$ for $\bar{w}$ if there
	exists $\kappa\geq0$ along with $\varepsilon>0$ and $\delta>0$ such that for all  $z\in\mathbb{B}(\bar{z},\varepsilon)$,
	\begin{equation*}
		\operatorname{dist}(z,S^{-1}(\bar{w}))\leq\kappa\operatorname{dist}(\bar{w},S(z)\cap\mathbb{B}(\bar{w},\delta)).
	\end{equation*}
\end{definition}
%It is known from Theorem 3H.3 of \cite{DontchevandRockafellar2014} that a set $S$ is calm at $\bar{z}$ for $\bar{w}$ if and only if the inverse $S^{-1}$ is metrically subregular at $\bar{w}$ for $\bar{z}$.

For a vector-valued locally Lipschitz continuous function $G:\mathcal{O}\rightarrow\mathcal{R}^{n}$, where $\mathcal{O}$ is an open subset of $\mathcal{R}^{m}$, $G$ is F(r\'{e}chet)-differentiable almost everywhere on $\mathcal{O}$ by Rademacher's theorem \cite[Theorem~9.60]{Rockafellar1998}. Let $\mathcal{D}_{F}$ denote the set of all points where $G$ is F-differentiable and  $G'(x)\in\mathcal{R}^{n\times m}$ be the Jacobian of $G$ at $x\in \mathcal{D}_{F}$, whose adjoint is denoted as $G'(\bar{x})^{*}$. The B-subdifferential of $G$ at $x\in\mathcal{O}$ is defined by
\begin{equation*}
	\partial_{B}G(x):=\{U\in\mathcal{R}^{n\times m}\,|\, \exists\, x^{k}\xrightarrow{\mathcal{D}_{F}} x, G'(x^{k})\rightarrow U\}.
\end{equation*}
The corresponding Clarke subdifferential of $G$ at $x$ is defined by $\partial G(x):=\operatorname{conv}\partial_{B}G(x)$. For a given $\mathcal{C}^{2}$-smooth mapping $G:\mathcal{R}^{m}\rightarrow\mathcal{R}^{n}$, denote $G''(x):\mathcal{R}^{m}\rightarrow\mathcal{R}^{n\times m}$ the second-order G\^{a}teaux derivative of $G$ at $x$.

\begin{definition}
	We say a function $G:\mathcal{R}^{m}\rightarrow\mathcal{R}^{n}$ is directionally differentiable at $x\in\mathcal{R}^{m}$ in a direction $d\in\mathcal{R}^{m}$, if
	\begin{equation*}
	G'(x,d):=\lim\limits_{t\downarrow 0}\frac{G(x+td)-G(x)}{t}
	\end{equation*}
	exists. If $G$ is directionally differentiable at $x$ in every direction $d\in\mathcal{R}^{m}$, $G$ is said to be directionally differentiable at $x$.
\end{definition}	
 It follows from \cite[Proposition~2.2]{Hoheisel2012} that $DG(x)(d)=\{G'(x,d)\}$ for all $d\in\mathcal{R}^{m}$, provided that $G$ is directionally differentiable at $x$. For a proper l.s.c. convex function $r$, given $x\in\operatorname{dom}r$ and $u\in\partial r(x)$, if $r$ is twice epi-differenitiable at $x$ for $u$, by  \cite[Theorem~13.40]{Rockafellar1998} and \eqref{eq:inverse-of-DS}, the proximal mapping $\operatorname{Prox}_{r}$ is directionally differentiable at $x+u$ with its directional derivative given by  $\operatorname{Prox}_{r}'(x+u,\cdot)=\operatorname{Prox}_{\frac{1}{2}d^{2}r(x,u)}(\cdot)$. 

To facilitate our analysis, we impose the following assumptions, which are crucial for establishing our main theoretical results.
\begin{assumption}\label{assump-1}
	Suppose that $r:\mathcal{R}^{m}\rightarrow(-\infty,+\infty]$ is a proper l.s.c. convex $\mathcal{C}^{2}$-cone reducible function.
\end{assumption}	
\begin{assumption}\label{assumption-2}
	Let $r:\mathcal{R}^{m}\rightarrow(-\infty,+\infty]$ be a proper l.s.c. convex function with $\bar{x}\in\operatorname{dom}r$ and $\bar{u}\in\partial r(\bar{x})$. Suppose that
	\begin{equation*}
		\begin{aligned}
		\operatorname{dom}D^{*}(\partial r)(\bar{x},\bar{u})&=\operatorname{aff}\{d\, |\, r^{\downarrow}_{-}(\bar{x},d)=\langle\bar{u},d\rangle\},\\
		\{d\, |\, 0\in D^{*}\operatorname{Prox}_{r}(\bar{x}+\bar{u})(d)\}&=\operatorname{aff}\{d\, |\, 0\in D\operatorname{Prox}_{r}(\bar{x}+\bar{u})(d)\}.
		\end{aligned}
	\end{equation*}
\end{assumption}	
\begin{assumption}\label{assumption-3}
	Let $r:\mathcal{R}^{m}\rightarrow(-\infty,+\infty]$ be a proper l.s.c. convex function with $\bar{x}\in\operatorname{dom}r$ and $\bar{u}\in\partial r(\bar{x})$. For any $v\in\operatorname{rge}(D^{*}\operatorname{Prox}_r(\bar{x}+\bar{u}))$, suppose that
	\begin{equation*}
		\mathop{\operatorname{argmin}}_{{d\in\mathcal{R}^{m},v=Ud}\atop{U\in\partial \operatorname{Prox}_{r}(\bar{x}+\bar{u})}}\langle v,d-v\rangle\cap\{d\,|\, v\in D^{*}\operatorname{Prox}_{r}(\bar{x}+\bar{u})(d)\}\neq\emptyset.
	\end{equation*}
\end{assumption}

We say that a function $r$ satisfies Assumption \ref{assumption-2}, or Assumption \ref{assumption-3} at the point $\bar{x}$ for $\bar{u}$. For brevity, we refer to a function $r$ as satisfying Assumption \ref{assumption-2} or Assumption \ref{assumption-3} when the context is clear. These assumptions are satisfied by various functions arsing in conic programming, including the $\ell_{p}$ norm functions for $p = 1, 2, \infty$, and indicator functions of standard cones such as the positive orthant, second-order, and positive semidefinite cones. In Appendix \ref{Appendix-A}, we establish some sufficient conditions for Assumptions \ref{assumption-2} and \ref{assumption-3}. Meanwhile, Appendix \ref{Appendix-B} provides the verification of these two assumptions for the indicator function of the positive semidefinite cone.

\section{Fundamental properties}\label{sec:prop-g-prox_g}	
In this section, for a given proper l.s.c convex function $r$, we establish a useful property of the Clarke 
subdifferential of the proximal mapping  $\operatorname{Prox}_{r}$ at the point of interest. Additionally, leveraging the properties of the graphical derivative, we also present additional characteristics associated with the second-order subderivative and proximal mapping. 

Given $\bar{x}\in\operatorname{dom
}r$ and $\bar{u}\in\partial r(\bar{x})$, the subsequent proposition provides full characterizations for   $\operatorname{dom}d^{2}(\bar{x},\bar{u})$ and $\operatorname{dom}D(\partial r)(\bar{x},\bar{u})$.
\begin{proposition}\label{prop-domd2g}
	Let $r:\mathcal{R}^{m}\rightarrow (-\infty,+\infty]$ be a proper l.s.c. convex function with $\bar{x}\in\operatorname{dom}r$ and $\bar{u}\in\partial r(\bar{x})$. One has
	\begin{equation}
	\label{eq:domain-graphical-derivative-partial-r}	\operatorname{dom}D(\partial r)(\bar{x},\bar{u})=\operatorname{dom}d^{2}r(\bar{x},\bar{u}).
	\end{equation}
	Furthermore, when $r$ is additionally $\mathcal{C}^{2}$-cone reducible, the function $r$ is parabolically epi-differentiable at $\bar{x}$ for every $w\in\{d\, |\, r^{\downarrow}_{-}(\bar{x},d)=\langle \bar{u},d\rangle\}$ and parabolically regular at $\bar{x}$ for $\bar{u}$. Moreover, $r$ is twice epi-differentiable at $\bar{x}$ for $\bar{u}$  with
	\begin{equation*}
		\operatorname{dom}d^{2}r(\bar{x},\bar{u})=\{d\, |\, r^{\downarrow}_{-}(\bar{x},d)=\langle \bar{u},d\rangle\}.
	\end{equation*}
\end{proposition}
\begin{proof} 
    To establish the equality in \eqref{eq:domain-graphical-derivative-partial-r}, we first show that the second-order subderivative $d^{2}r(\bar{x},\bar{u})$ is subdifferentiable at every point of its domain. As shown in  \cite[Proposition~13.20]{Rockafellar1998}, $d^{2}r(\bar{x},\bar{u})$ is convex and equal to $\gamma_{C}^{2}$, where $C=\{v\,|\,d^{2}r(\bar{x},\bar{u})(v)\leq 1\}$ is a closed convex set containing $0$. Since $\gamma_{C}$ is a positively homogeneous convex function with $\gamma_{C}(0)=0$, it follows from  \cite[Proposition~2.124]{Bonnans2000} that $\gamma_{C}$ is subdifferentiable at any $v\in\operatorname{dom}d^{2}r(\bar{x},\bar{u})$. For any $v\in\operatorname{dom}d^{2}r(\bar{x},\bar{u})$ and $v\notin\mathop{\operatorname{argmin}}\limits_{d\in\mathcal{R}^{m}}\{d^{2}r(\bar{x},\bar{u})(d)\}$, let $w\in\partial\gamma_{C}(v)$, where $w$ is finite and satisfies $\gamma_{C}(y)\geq\gamma_{C}(v)+\langle w,y-v\rangle,\;\forall\; y\in\mathcal{R}^{m}$. 
	By \cite[Propositions~13.5 and~13.20]{Rockafellar1998}, $d^{2}r(\bar{x},\bar{u})(v)=\gamma^{2}_{C}(v)>0$ and there exists a neighborhood $\mathcal{O}$ of $v$ such that for any $y\in\mathcal{O}$, both $d^{2}r(\bar{x},\bar{u})(y)>0$ and $\gamma_{C}(v)+\langle w,y-v\rangle>0$ hold. Therefore, the square of $\gamma_{C}(y)$ satisfies the following relationship
	\begin{equation*}
		\begin{aligned}
		d^{2}r(\bar{x},\bar{u})(y)=\gamma_{C}^{2}(y)&\geq\gamma_{C}^{2}(v)+2\gamma_{C}(v)\langle w,y-v\rangle+\langle w,y-v\rangle^{2}\\
		&\geq d^{2}r(\bar{x},\bar{u})(v)-2\gamma_{C}(v)\|w\|\|y-v\|.
		\end{aligned}
	\end{equation*}
	Finally, combining \cite[Proposition~2.1]{Mohammadi2022}, it concludes $\partial d^{2}r(\bar{x},\bar{u})(v)\neq\emptyset$. If, however, $v\in\mathop{\operatorname{argmin}}\limits_{d\in\mathcal{R}^{m}}\{d^{2}r(\bar{x},\bar{u})(d)\}$, then $0\in\partial d^{2}r(\bar{x},\bar{u})(v)$ and \eqref{eq:domain-graphical-derivative-partial-r} follows from \cite[Theorem~13.40]{Rockafellar1998}.

	When $r$ is $\mathcal{C}^{2}$-cone reducible, as established in  \cite[Theorem~6.2]{Mohammadi2021} and  \cite[Example~13.62(b)]{Rockafellar1998}, the function $r$ is parabolically epi-differentiable at $\bar{x}$ for all $w\in\{d\, |\, r^{\downarrow}_{-}(\bar{x},d)=\langle \bar{u},d\rangle\}$. Furthermore, \cite[Propositions~3.103 and~3.136]{Bonnans2000} demonstrates that $r$ is also parabolically
	regular at $\bar{x}$ for $\bar{u}$. Combining \cite[Theorem~3.8]{Mohammadi2020},  the function $r$ is properly twice epi-differentiable at $\bar{x}$ for $\bar{u}$,
	with the domain $\operatorname{dom}d^{2}r(\bar{x},\bar{u})$ defined as $\{d\, |\, r^{\downarrow}_{-}(\bar{x},d)=\langle \bar{u},d\rangle\}$. This completes the proof.
\end{proof}

\begin{proposition}\label{prop-partial-proximal-mapping}
	Let $r:\mathcal{R}^{m}\rightarrow (-\infty,+\infty]$ be a proper l.s.c. convex function. For any $\bar{x}\in\operatorname{dom}r$, $\bar{u}\in\partial r(\bar{x})$, $U\in\partial\operatorname{Prox}_{r}(\bar{x}+\bar{u})$ and $d\in\mathcal{R}^{m}$, we have
	$
	\langle Ud,d-Ud\rangle\geq0.
	$
\end{proposition}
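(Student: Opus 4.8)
The plan is to reduce the claim to a pointwise inequality that holds at every point of differentiability of $\operatorname{Prox}_r$, and then to propagate it first through the $B$-subdifferential by continuity and finally through the convex hull by a concavity argument. The starting point is the firm nonexpansiveness of $P:=\operatorname{Prox}_r$, which is standard: since $P=(I+\partial r)^{-1}$ and $\partial r$ is monotone, one has for all $x,y\in\mathcal{R}^{m}$
\[
\langle P(x)-P(y),\,x-y\rangle\ \geq\ \|P(x)-P(y)\|^{2}.
\]
(Indeed, writing $p=P(x)$, $q=P(y)$, monotonicity of $\partial r$ applied to $x-p\in\partial r(p)$ and $y-q\in\partial r(q)$ gives $\langle (x-y)-(p-q),\,p-q\rangle\geq0$, which rearranges to the displayed inequality.)

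First I would fix a point $z$ at which $P$ is F-differentiable with Jacobian $J=P'(z)$, and apply firm nonexpansiveness with $x=z+td$, $y=z$. Using the first-order expansion $P(z+td)=P(z)+tJd+o(t)$, the left-hand side equals $t^{2}\langle Jd,d\rangle+o(t^{2})$ and the right-hand side equals $t^{2}\|Jd\|^{2}+o(t^{2})$; dividing by $t^{2}>0$ and letting $t\downarrow0$ yields $\langle Jd,d\rangle\geq\|Jd\|^{2}$, i.e. $\langle Jd,d-Jd\rangle\geq0$. Consequently the claimed inequality holds for every $J\in\partial_{B}\operatorname{Prox}_{r}(\bar{x}+\bar{u})$: such a $J$ is a limit $P'(z^{k})\to J$ along differentiability points $z^{k}\to\bar{x}+\bar{u}$, and the map $J\mapsto\langle Jd,d-Jd\rangle$ is continuous, so the inequality passes to the limit.

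The only nontrivial step, and the one I expect to be the main obstacle, is to upgrade the inequality from $\partial_{B}\operatorname{Prox}_{r}(\bar{x}+\bar{u})$ to its convex hull $\partial\operatorname{Prox}_{r}(\bar{x}+\bar{u})=\operatorname{conv}(\partial_{B}\operatorname{Prox}_{r}(\bar{x}+\bar{u}))$, since the target quantity is quadratic rather than linear in $U$. The resolution is to observe that the function $\phi:\mathcal{R}^{m\times m}\to\mathcal{R}$ defined by $\phi(J):=\langle Jd,d\rangle-\|Jd\|^{2}$ is \emph{concave} in $J$: the first term is affine in $J$, while $\|Jd\|^{2}$ is the composition of the linear map $J\mapsto Jd$ with the convex function $\|\cdot\|^{2}$, hence convex, so $-\|Jd\|^{2}$ is concave. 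Therefore, for any $U=\sum_{i}\lambda_{i}J_{i}$ with $J_{i}\in\partial_{B}\operatorname{Prox}_{r}(\bar{x}+\bar{u})$, $\lambda_{i}\geq0$ and $\sum_{i}\lambda_{i}=1$, Jensen's inequality gives $\phi(U)\geq\sum_{i}\lambda_{i}\phi(J_{i})\geq0$, which is precisely $\langle Ud,d-Ud\rangle\geq0$ for every $U\in\partial\operatorname{Prox}_{r}(\bar{x}+\bar{u})$. (An alternative route for this last step is to note that each $J\in\partial_{B}\operatorname{Prox}_{r}$ is symmetric with $0\preceq J\preceq I$, a property preserved under convex combinations, whence $\langle Ud,d-Ud\rangle=d^{\top}U(I-U)d\geq0$ by commutativity of $U$ and $I-U$; the concavity argument avoids invoking symmetry and is self-contained.)
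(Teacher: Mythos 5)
Your proof is correct, and it reaches the key inequality $\langle Jd,d\rangle\geq\|Jd\|^{2}$ for $J\in\partial_{B}\operatorname{Prox}_{r}(\bar{x}+\bar{u})$ by a genuinely different and more elementary route than the paper. The paper derives it through coderivative machinery: it invokes the positive semidefiniteness of the limiting coderivative of the maximal monotone operator $\partial r$ (Poliquin--Rockafellar), transfers this to $D^{*}\operatorname{Prox}_{r}$ via the inverse-coderivative relation to get $\langle v,d-v\rangle\geq0$ for all $v\in D^{*}\operatorname{Prox}_{r}(\bar{x}+\bar{u})(d)$, and then uses the inclusion $Vd\in D^{*}\operatorname{Prox}_{r}(\bar{x}+\bar{u})(d)$ for $V\in\partial_{B}\operatorname{Prox}_{r}(\bar{x}+\bar{u})$ (Gfrerer--Outrata). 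You instead obtain the same pointwise inequality directly from firm nonexpansiveness of the proximal mapping at points of differentiability and pass to $\partial_{B}$ by continuity, which is self-contained and avoids three external citations. The final step (Carath\'eodory plus concavity of $J\mapsto\langle Jd,d\rangle-\|Jd\|^{2}$, equivalently Jensen applied to the convex function $\|\cdot\|^{2}$) is essentially identical to the paper's. The only thing your route does not deliver is the paper's intermediate coderivative inequality $\langle v,d-v\rangle\geq0$ for $v\in D^{*}\operatorname{Prox}_{r}(\bar{x}+\bar{u})(d)$, which is strictly stronger than the stated proposition since $D^{*}\operatorname{Prox}_{r}(\bar{x}+\bar{u})(d)$ need not consist only of elements of the form $Vd$; that inequality is not cited elsewhere in the paper, so nothing is lost for the proposition as stated, but it is the natural companion fact one would want on record for the surrounding coderivative-based analysis (e.g., Assumption 2.3 and Lemma 5.3).
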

\begin{proof}
	For the given function $r$, the limiting/Mordukhovich subdifferential $\partial r$ is maximal monotone \cite{Rockafellar70}. As established by \cite[Theorem~2.1]{Poliquin1998}, $\langle v,d\rangle\geq0$ for any $v\in D^{*}(\partial r)(\bar{x},\bar{u})(d)$. By invoking \eqref{eq:inverse-of-DS&DS*} and  \cite[Theorem~1.62]{Mordukhovich2006}, $v\in D^{*}\operatorname{Prox}_{r}(\bar{x}+\bar{u})(d)$ is equivalent to $v-d\in D^{*}(\partial r)(\bar{x},\bar{u})(-v)$. Consequently, the following inequality is valid
	\begin{equation}\label{eq:prop-partial-proximal-mapping-1}
		\langle v,d-v\rangle\geq0, \, \forall\, v\in D^{*}\operatorname{Prox}_{r}(\bar{x}+\bar{u})(d).
	\end{equation}
	Observing from \cite[Proposition~2.4]{Gfrerer2022}, for any $V\in\partial_{B}\operatorname{Prox}_{r}(\bar{x}+\bar{u})$ and $d\in\mathcal{R}^{m}$, the element $Vd$ belongs to $D^{*}\operatorname{Prox}_{r}(\bar{x}+\bar{u})(d)$. Substituting this into  \eqref{eq:prop-partial-proximal-mapping-1}, it follows that  $\langle Vd,d\rangle\geq\|Vd\|^{2}$ for any $V\in\partial_{B}\operatorname{Prox}_{r}(\bar{x}+\bar{u})$ and $d\in\mathcal{R}^{m}$. By applying Carath\'{e}odory's theorem (see e.g., \cite[Theorem~17.1]{Rockafellar1970}), for any $U\in\partial\operatorname{Prox}_{r}(\bar{x}+\bar{u})$, there exist a positive integer $t$, matrices  $U_{i}\in\partial_{B}\operatorname{Prox}_{r}(\bar{x}+\bar{u})$,  $i=1,2,\ldots,t$ and  real numbers $\lambda_{i}\in[0,1]$ with $\sum\limits_{i=1}^{t}\lambda_{i}=1$ such that $U=\sum\limits_{i=1}^{t}\lambda_{i}U_{i}$. Combining Jensen's inequality, it concludes that
	\begin{equation*}
		\langle d,Ud\rangle=\sum_{i=1}^{t}\lambda_{i}\langle d,U_{i}d\rangle\geq\sum_{i=1}^{t}\lambda_{i}\|U_{i}d\|^{2}\geq\|Ud\|^{2},\, \forall\, d\in\mathcal{R}^{m}.
	\end{equation*}
	This completes the proof.
\end{proof}
 
Proposition \ref{prop-partial-proximal-mapping} plays a pivotal role in establishing properties related to SSOSC for problem $(P)$. By leveraging the properties of the second-order subderivative, we establish the following two key propositions. These propositions will be essential for subsequent analyses and proofs concerning SSOSC.
\begin{proposition}\label{prop-2}
	Let $r:\mathcal{R}^{m}\rightarrow (-\infty,+\infty]$ be a proper l.s.c. convex function with $\bar{x}\in\operatorname{dom}r$ and $\bar{u}\in\partial r(\bar{x})$. Suppose that the function $r$ is parabolically epi-differentiable at $\bar{x}$ for every $w\in\{d\, |\, r^{\downarrow}_{-}(\bar{x},d)=\langle \bar{u},d\rangle\}$ and parabolically regular at $\bar{x}$ for $\bar{u}$. Then we have
	\begin{equation}\label{eq:reformulation-graphical-0}
		D(\partial r)(\bar{x},\bar{u})(0)=\{d\, |\, r^{\downarrow}_{-}(\bar{x},d)=\langle \bar{u},d\rangle\}^{\circ}
	\end{equation}
	and
	$
		\{d\, |\, r^{\downarrow}_{-}(\bar{x},d)=\langle \bar{u},d\rangle\}^{\circ}=\{d\, |\, r^{*\downarrow}_{-}(\bar{u},d)=\langle \bar{x},d\rangle\}\cap\{d\, |\, d^{2}r^{*}(\bar{u},\bar{x})(d)=0\}.
	$
\end{proposition}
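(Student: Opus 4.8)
The plan is to reduce both identities to properties of the purely second-order object $q:=\frac{1}{2}d^{2}r(\bar{x},\bar{u})$ and its Fenchel conjugate. First I would record the consequences of the standing hypotheses. Since $r$ is parabolically epi-differentiable at $\bar{x}$ for every $w\in\{d\,|\,r^{\downarrow}_{-}(\bar{x},d)=\langle\bar{u},d\rangle\}$ and parabolically regular at $\bar{x}$ for $\bar{u}$, Theorem 3.8 of \cite{Mohammadi2020}, exactly as invoked in the proof of Proposition \ref{prop-domd2g}, guarantees that $r$ is twice epi-differentiable at $\bar{x}$ for $\bar{u}$ with $\operatorname{dom}d^{2}r(\bar{x},\bar{u})=\{d\,|\,r^{\downarrow}_{-}(\bar{x},d)=\langle\bar{u},d\rangle\}=:K$. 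Note that $K$ is a closed convex cone, being the zero set of the nonnegative sublinear function $d\mapsto r^{\downarrow}_{-}(\bar{x},d)-\langle\bar{u},d\rangle$, and that by Proposition 13.20 of \cite{Rockafellar1998} the function $q$ is convex, proper, lower semicontinuous, positively homogeneous of degree $2$, nonnegative, and satisfies $q(0)=0$ with $\operatorname{dom}q=K$.

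The key computational lemma, used in both parts, is the identity $\partial q(0)=K^{\circ}$. For the inclusion $K^{\circ}\subseteq\partial q(0)$ I would use $q\geq 0$ on $K=\operatorname{dom}q$ together with $q\equiv+\infty$ off $K$; for the reverse inclusion I would exploit the degree-$2$ positive homogeneity, namely if $\langle v,y\rangle\leq q(y)$ for all $y$, then for $y\in K$ the scaling $\langle v,ty\rangle\leq q(ty)=t^{2}q(y)$ with $t\downarrow 0$ forces $\langle v,y\rangle\leq 0$. Here the genuine equality $\operatorname{dom}q=K$, rather than the general inclusion $\operatorname{dom}d^{2}r(\bar{x},\bar{u})\subseteq K$ from Proposition 13.5 of \cite{Rockafellar1998}, is essential, and securing it is the main obstacle; this is precisely why the parabolic hypotheses (and the resulting twice epi-differentiability) are imposed. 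The first assertion \eqref{eq:reformulation-graphical-0} then follows by combining $\partial q(0)=K^{\circ}$ with Theorem 13.40 of \cite{Rockafellar1998}, which under twice epi-differentiability yields $D(\partial r)(\bar{x},\bar{u})(0)=\partial q(0)$.

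For the second assertion I would pass to the conjugate. By Theorem 2.2 and Theorem 2.4 of \cite{Rockafellar1990}, recalled in Section \ref{sec:preliminaries}, the conjugate of $q$ is $q^{*}=\frac{1}{2}d^{2}r^{*}(\bar{u},\bar{x})$. Because $q\geq 0$ and $q(0)=0$, the conjugate $q^{*}$ is nonnegative, and $q^{*}(d)=0$ holds if and only if $\langle d,y\rangle\leq q(y)$ for all $y$, i.e.\ if and only if $d\in\partial q(0)=K^{\circ}$; this identifies $\{d\,|\,d^{2}r^{*}(\bar{u},\bar{x})(d)=0\}=K^{\circ}$. Finally, applying the general domain inclusion of Proposition 13.5 of \cite{Rockafellar1998} to $r^{*}$, using $\bar{x}\in\partial r^{*}(\bar{u})$ by conjugacy, gives $\operatorname{dom}d^{2}r^{*}(\bar{u},\bar{x})\subseteq\{d\,|\,r^{*\downarrow}_{-}(\bar{u},d)=\langle\bar{x},d\rangle\}$, so every $d$ with $q^{*}(d)=0$ already lies in the critical cone of $r^{*}$. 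Intersecting $\{d\,|\,d^{2}r^{*}(\bar{u},\bar{x})(d)=0\}=K^{\circ}$ with $\{d\,|\,r^{*\downarrow}_{-}(\bar{u},d)=\langle\bar{x},d\rangle\}$ therefore leaves it unchanged, yielding the claimed equality. Once $\partial q(0)=K^{\circ}$ is in hand, this conjugacy bookkeeping is routine.
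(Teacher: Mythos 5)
Your proposal is correct and follows essentially the same route as the paper: both hinge on $\operatorname{dom}d^{2}r(\bar{x},\bar{u})=\{d\,|\,r^{\downarrow}_{-}(\bar{x},d)=\langle\bar{u},d\rangle\}$ from Theorem 3.8 of \cite{Mohammadi2020}, the homogeneity/nonnegativity argument identifying $\partial\bigl(\tfrac{1}{2}d^{2}r(\bar{x},\bar{u})\bigr)(0)$ with the polar cone, Theorem 13.40 of \cite{Rockafellar1998} to pass to $D(\partial r)(\bar{x},\bar{u})(0)$, and the conjugacy $\tfrac{1}{2}d^{2}r^{*}(\bar{u},\bar{x})=\bigl(\tfrac{1}{2}d^{2}r(\bar{x},\bar{u})\bigr)^{*}$ together with the domain inclusion of Proposition 13.5 for the second identity. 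The only difference is cosmetic: you characterize $\{d\,|\,d^{2}r^{*}(\bar{u},\bar{x})(d)=0\}$ directly via the Fenchel--Young equality at $0$, whereas the paper routes through $\operatorname{argmin}$, Fermat's rule, and subdifferential inversion, which amounts to the same computation.
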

\begin{proof}
	By \cite[Proposition~13.20]{Rockafellar1998}, the function $d^{2}r^{*}(\bar{u},\bar{x})$ is convex with nonnegative value and 
	\begin{equation}\label{eq:graphical-derivative-0-equations}
		\begin{aligned}
		&\{d\, |\, r^{*\downarrow}_{-}(\bar{u},d)=\langle \bar{x},d\rangle\}\cap\{d\, |\, d^{2}r^{*}(\bar{u},\bar{x})(d)=0\}\\
		&=\mathop{\operatorname{argmin}}_{d\in\mathcal{R}^{m}}\{d^{2}r^{*}(\bar{u},\bar{x})(d)\, |\, r^{*\downarrow}_{-}(\bar{u},d)=\langle \bar{x},d\rangle\}=\mathop{\operatorname{argmin}}_{d\in\mathcal{R}^{m}}\{d^{2}r^{*}(\bar{u},\bar{x})(d)\}\\
		&=\{d\, |\, 0\in\partial(\frac{1}{2}d^{2}r^{*}(\bar{u},\bar{x}))(d)\}=\partial(\frac{1}{2}d^{2}r(\bar{x},\bar{u}))(0)=D(\partial r)(\bar{x},\bar{u})(0),
		\end{aligned}
	\end{equation}
	where the second equality follows from  \cite[Proposition~13.5]{Rockafellar1998}, 
	the fourth equality is due to  \cite[Theorem~13.21]{Rockafellar1998} (see also \cite[Theorem~2.4]{Rockafellar1990}), and the last equality follows from \cite[Theorem~3.8]{Mohammadi2020} and \cite[Theorem~13.40]{Rockafellar1998} (see also  \cite[Theorem~2.4]{Rockafellar1990}).
	
	Now we prove the result \eqref{eq:reformulation-graphical-0}.
	It is demonstrated by \cite[Theorem~3.8]{Mohammadi2020} that 
	$
		\operatorname{dom}d^{2}r(\bar{x},\bar{u})=\{d\, |\, r^{\downarrow}_{-}(\bar{x},d)=\langle \bar{u},d\rangle\}$.
    By the convexity of  $d^{2}r(\bar{x},\bar{u})$ along with $d^{2}r(\bar{x},\bar{u})(0)=0$ from \cite[Proposition~13.20]{Rockafellar1998},
	for any $v\in D(\partial r)(\bar{x},\bar{u})(0)$, the following inequality holds.
    \begin{equation*}
		d^{2}r(\bar{x},\bar{u})(z)-d^{2}r(\bar{x},\bar{u})(0)\geq\langle v,z-0\rangle,\quad\forall\,z\in\mathcal{R}^{m}.
    \end{equation*}
    Choose $\lambda>0$ and   $z\in\operatorname{dom}d^{2}r(\bar{x},\bar{u})$. Given that $d^{2}r(\bar{x},\bar{u})(z)\geq 0$, the aforementioned inequality is simplified to 
    \begin{equation*}
		\lambda^{2}d^{2}r(\bar{x},\bar{u})(z)\geq\lambda\langle v,z\rangle,
    \end{equation*}
	from which it deduces that $\langle v,z\rangle\leq 0$ and $z\in\{d\, |\, r^{\downarrow}_{-}(\bar{x},d)=\langle \bar{u},d\rangle\}^{\circ}$. As a result, it concludes that $D(\partial r)(\bar{x},\bar{u})(0)\subseteq\{d\, |\, r^{\downarrow}_{-}(\bar{x},d)=\langle \bar{u},d\rangle\}^{\circ}$. Conversely, for any $z\in\{d\, |\, r^{\downarrow}_{-}(\bar{x},d)=\langle \bar{u},d\rangle\}^{\circ}$ and $v\in\{d\, |\, r^{\downarrow}_{-}(\bar{x},d)=\langle \bar{u},d\rangle\}$, we have
	\begin{equation*}
		\langle v,z\rangle\leq 0\leq d^{2}r(\bar{x},\bar{u})(v)-d^{2}r(\bar{x},\bar{u})(0).
	\end{equation*}
	If $v\notin\{d\, |\, r^{\downarrow}_{-}(\bar{x},d)=\langle \bar{u},d\rangle\}$, then $d^{2}r(\bar{x},\bar{u})(v)=+\infty$, which indicates that the above inequality is valid for any $v\in\mathcal{R}^{m}$. By the convexity of $d^{2}r(\bar{x},\bar{u})$, it follows that $z\in\partial d^{2}r(\bar{x},\bar{u})(0)=D(\partial r)(\bar{x},\bar{u})(0)$. This demonstrates that $\{d\, |\, r^{\downarrow}_{-}(\bar{x},d)=\langle \bar{u},d\rangle\}^{\circ}\subseteq D(\partial r)(\bar{x},\bar{u})(0)$.
	Combining these results  with   \eqref{eq:graphical-derivative-0-equations}, the desired results follow directly.
\end{proof}

\begin{remark}
	The result presented in   equality \eqref{eq:reformulation-graphical-0} simplifies to \cite[Corollary~3.1]{MohammadBoris2020} when the function $r$ is taken as the indicator function of a convex set satisfying the specified conditions in their work.
\end{remark}

\begin{proposition}\label{prop-5}
	Let $r:\mathcal{R}^{m}\rightarrow (-\infty,+\infty]$ be a proper l.s.c. convex function with $\bar{x}\in\operatorname{dom}r$ and $\bar{u}\in\partial r(\bar{x})$. Suppose that the function $r$ is parabolically epi-differentiable at $\bar{x}$ for every $w\in\{d\, |\, r^{\downarrow}_{-}(\bar{x},d)=\langle \bar{u},d\rangle\}$ and parabolically regular at $\bar{x}$ for $\bar{u}$. Then for $d\in\mathcal{R}^{m}$,  $D\operatorname{Prox}_{r}(\bar{x}+\bar{u})(d)=\{\operatorname{Prox}_{r}'(\bar{x}+\bar{u},d)\}$. For any $h\in\mathcal{R}^{m}$, that $h= \operatorname{Prox}'_{r}(\bar{x}+\bar{u},h+d)$ holds if and only if $h$ satisfies $r^{\downarrow}_{-}(\bar{x},h)=\langle \bar{u},h\rangle$ and $d\in D(\partial r)(\bar{x},\bar{u})(h)$.
	Moreover, for any $h$ satisfying $r^{\downarrow}_{-}(\bar{x},h)=\langle \bar{u},h\rangle$
	and $d\in D(\partial r)(\bar{x},\bar{u})(h)$, we have $d^{2}r(\bar{x},\bar{u})(h)=\langle h,d\rangle$. Furthermore, we also have
	\begin{equation}\label{eq:proximal-mapping-directional-derivative}
	\begin{aligned}	\operatorname{Prox}'_{r}(\bar{x}+\bar{u},h)=h\;&\Longleftrightarrow\;\operatorname{Prox}'_{r^{*}}(\bar{x}+\bar{u},h)=0\Longleftrightarrow\; d^{2}r(\bar{x},\bar{u})(h)=0,\\
    \operatorname{Prox}'_{r^{*}}(\bar{x}+\bar{u},h)=h\;&\Longleftrightarrow\;\operatorname{Prox}'_{r}(\bar{x}+\bar{u},h)=0\Longleftrightarrow\; d^{2}r^{*}(\bar{x},\bar{u})(h)=0.
    \end{aligned}
	\end{equation}
\end{proposition}
\begin{proof}
	It follows from Proposition \ref{prop-domd2g} and \cite[Theorem~3.8]{Mohammadi2020} that
	$
		\operatorname{dom}D(\partial r)(\bar{x},\bar{u})=\operatorname{dom}d^{2}r(\bar{x},\bar{u})=\{d\, |\, r^{\downarrow}_{-}(\bar{x},d)=\langle \bar{u},d\rangle\}$. 
	Given that $\bar{u}\in\partial r(\bar{x})$ and $\operatorname{Prox}_{r}(\bar{x}+\bar{u})=\bar{x}$, by \eqref{eq:inverse-of-DS}, \cite[Theorem~3.8]{Mohammadi2020},  \cite[Theorem~13.40]{Rockafellar1998} and \cite[Proposition~2.2]{Hoheisel2012}, the proximal mapping $\operatorname{Prox}_{r}$ is directionally differentiable at $\bar{x}+\bar{u}$ with  
    \begin{equation*}
	\begin{aligned}
		\operatorname{Prox}'_{r}(\bar{x}+\bar{u},h+d)=h\;&\Longleftrightarrow\; h\in D\operatorname{Prox}_{r}(\bar{x}+\bar{u})(h+d)\\ &\Longleftrightarrow\; d\in D(\partial r)(\bar{x}, \bar{u})(h)\;\Longleftrightarrow\; d\in\partial(\frac{1}{2}d^{2}r(\bar{x},\bar{u}))(h)
		\end{aligned}
    \end{equation*}
    and $D\operatorname{Prox}_{r}(\bar{x}+\bar{u})(d)=\{\operatorname{Prox}_{r}'(\bar{x}+\bar{u},d)\}=\{\operatorname{Prox}_{\frac{1}{2}d^{2}r(\bar{x},\bar{u})}(d)\}$. 
    Consequently, if $d\in D(\partial r)(\bar{x}, \bar{u})(h)$, by \cite[Theorem~13.21]{Rockafellar1998} and \cite[Theorem~23.5 and Corollary~15.3.2]{Rockafellar1970} it yields
	\begin{equation*}%\label{eq:inequality-second-order-subderivative}
		\langle h,d\rangle=\frac{1}{2}d^{2}r(\bar{x},\bar{u})(h)+\frac{1}{2}d^{2}r^{*}(\bar{u},\bar{x})(d)\leq\sqrt{d^{2}r(\bar{x},\bar{u})(h)}\sqrt{d^{2}r^{*}(\bar{u},\bar{x})(d)},
	\end{equation*}
	which implies $\langle h,d\rangle=d^{2}r(\bar{x},\bar{u})(h)$. Combining this with Moreau's identity \eqref{eq:moreau-identity}, \cite[Proposition~13.20 and Theorems~13.21,~13.40]{Rockafellar1998}, the result \eqref{eq:proximal-mapping-directional-derivative} follows directly. This completes the proof.
\end{proof}

\section{SOSC, SSOSC and constraint qualification conditions}\label{sec:SRCQ-SOSC-SSOSC-nondegeneracy}

In this section, we introduce SOSC, SSOSC and constraint qualification conditions for the general composite optimization problem $(P)$. Let $(\bar{x},\bar{c})$ be an optimal solution of problem \eqref{general-prob-equivalent} with $(\bar{\mu},-1)$ being the corresponding Lagrange multiplier and $\mathcal{K}:=\operatorname{epi}g$. Suppose that the following Robinson constraint qualification (RCQ) for problem \eqref{general-prob-equivalent} holds at $(\bar{x},\bar{c})$, i.e.,
\begin{equation}\label{RCQ-primal-prob}
	(F'(\bar{x}),1)(\mathcal{R}^{n}\times\mathcal{R})-\mathcal{T}_{\mathcal{K}}(F(\bar{x}),\bar{c})=\mathcal{R}^{m}\times\mathcal{R},
\end{equation}
where $\mathcal{T}_{\mathcal{K}}(F(\bar{x}),\bar{c})=\{(d,d_c)\, |\, g^{\downarrow}_{-}(F(\bar{x}),d)\leq d_c\}$. It follows from \cite{IoffeandOutrata2008} that $\partial(g\circ F)(\bar{x})=F'(\bar{x})^{*}\partial g(F(\bar{x}))$. We say a point $x\in\mathcal{R}^{n}$ is a stationary point of problem $(P)$ if it satisfies $0\in F'(x)^{*}\partial g(F(x))$. 
By \cite[Theorem~3.9 and Proposition~3.17]{Bonnans2000}, the set of Lagrange multipliers is a nonempty, convex and compact set if and only if RCQ \eqref{RCQ-primal-prob} holds at $(\bar{x},\bar{c})$. RCQ for problem $(P)$ at $\bar{x}$ takes the following form
\begin{equation*}
	F'(\bar{x})\mathcal{R}^{n}-\{d\,|\,\exists\, d_c\in\mathcal{R},\, g^{\downarrow}_{-}(F(\bar{x}),d)\leq d_c\}=\mathcal{R}^{m},
\end{equation*}
which can be written equivalently as
$
	\mathcal{N}_{\operatorname{dom}g}(F(\bar{x}))\cap\operatorname{ker}F'(\bar{x})^{*}=\{0\}$ or
	$\partial^{\infty}g(F(\bar{x}))\cap\operatorname{ker}F'(\bar{x})^{*}=\{0\}$,
by \cite[Proposition~8.12 and Exercise~8.23]{Rockafellar1998}. The strict Robinson constraint qualification (SRCQ) for problem \eqref{general-prob-equivalent} is said to hold at $(\bar{x},\bar{c})$ with $(\bar{\mu},-1)$ if
\begin{equation*}\label{SRCQ-primal-prob}
	(F'(\bar{x}),1)(\mathcal{R}^{n}\times\mathcal{R})-\mathcal{T}_{\mathcal{K}}(F(\bar{x}),\bar{c})\cap(\bar{\mu},-1)^{\perp}=\mathcal{R}^{m}\times\mathcal{R}.
\end{equation*}
By \cite[Corollary~2.4.9]{Clarke1990} and \cite[Theorem~8.9]{Rockafellar1998}, $g^{\downarrow}_{-}(F(\bar{x}),d)\geq \langle\bar{\mu},d\rangle$ holds for any  $d\in\mathcal{R}^{m}$ and  $(\bar{\mu},-1)\in\mathcal{N}_{\mathcal{K}}(F(\bar{x}),g(F(\bar{x})))$ is equivalent to $\bar{\mu}\in\partial g(F(\bar{x}))$.
Then
$\mathcal{T}_{\mathcal{K}}(F(\bar{x}),\bar{c})\cap(\bar{\mu},-1)^{\perp}=\{(d,d_c)\, |\, g^{\downarrow}_{-}(F(\bar{x}),d)=\langle\bar{\mu},d\rangle=d_c\}$ and
SRCQ for problem $(P)$ can be written as
\begin{equation}\label{eq:srcq}
	F'(\bar{x})\mathcal{R}^{n}-\{d\, |\, g^{\downarrow}_{-}(F(\bar{x}),d)=\langle\bar{\mu},d\rangle\}=\mathcal{R}^{m}.
\end{equation}
The corresponding Lagrange multiplier set is a singleton if SRCQ \eqref{eq:srcq} holds at $\bar{x}$ as demonstrated in \cite[Proposition~4.50]{Bonnans2000}.

Suppose that $g$ is $\mathcal{C}^{2}$-cone reducible at $\bar{x}$. The nondegeneracy condition for problem (\ref{general-prob-equivalent}) holds at $(\bar{x},\bar{c})$ if
\begin{equation*}
	(F'(\bar{x}),1)(\mathcal{R}^{n}\times\mathcal{R})-\operatorname{lin}\mathcal{T}_{\mathcal{K}}(F(\bar{x}),\bar{c})=\mathcal{R}^{m}\times\mathcal{R}.
\end{equation*}
While the nondegeneracy condition for problem $(P)$ takes the following form
\begin{equation*}
F'(\bar{x})\mathcal{R}^{n}-\{d\, |\, g^{\downarrow}_{-}(F(\bar{x}),d)\leq-g^{\downarrow}_{-}(F(\bar{x}),-d)\}=\mathcal{R}^{m},
\end{equation*}
which can be equivalently reformulated as
\begin{equation}\label{eq:comp-prob-nondegeneracy-cond}
	F'(\bar{x})\mathcal{R}^{n}-\{d\, |\, g^{\downarrow}_{-}(F(\bar{x}),d)=-g^{\downarrow}_{-}(F(\bar{x}),-d)\}=\mathcal{R}^{m},
\end{equation}
by \cite[Corollary~4.7.2]{Rockafellar1970}.
Define the critical cone of problem (\ref{general-prob-equivalent}) at $(\bar{x},\bar{c})$ as
\begin{equation*}\label{eq:critical-cone-problem(1)}
	\begin{aligned}
	\widehat{\mathcal{C}}(\bar{x},\bar{c})&=\{(d,d_c)\, |\, g^{\downarrow}_{-}(F(\bar{x}),F'(\bar{x})d)=0,d_c=0\}\\
	&=\{(d,d_c)\, |\, g^{\downarrow}_{-}(F(\bar{x}),F'(\bar{x})d)=\langle F'(\bar{x})d,\bar{\mu}\rangle=0,d_c=0\}
	\end{aligned}
\end{equation*}
and the critical cone of problem $(P)$ at $\bar{x}$ as
\begin{equation*}\label{eq:critical-cone-problem(P)}
	\mathcal{C}(\bar{x})=\{d\, |\, g^{\downarrow}_{-}(F(\bar{x}),F'(\bar{x})d)=0\}=\{d\, |\, F'(\bar{x})d\in\mathcal{N}_{\partial g(F(\bar{x}))}(\bar{\mu})\}.
\end{equation*}
We say SOSC for problem \eqref{general-prob-equivalent} holds at $(\bar{x},\bar{c})$ if
\begin{equation*}
	\sup_{\mu\in\Lambda(\bar{x})}\{\langle\tilde{d},\nabla^{2}\mathcal{L}((\cdot);(\mu,-1))\tilde{d}\rangle(\bar{x},\bar{c})-\sup_{(w,\gamma)\in\mathcal{T}^{2}}\{\langle\mu,w\rangle-\gamma\}\}> 0,\, \forall\, \tilde{d}\in\widehat{\mathcal{C}}(\bar{x},\bar{c})\setminus\{0\},
\end{equation*}
where $\tilde{d}=(d,d_{c})$, $\Lambda(\bar{x})$ denotes the set of all Lagrange multipliers associated with $\bar{x}$ for problem $(P)$, and
$
\mathcal{T}^{2}:=\mathcal{T}^{2}_{\operatorname{epi}g}((F(\bar{x}),\bar{c}),(F'(\bar{x})d,g^{\downarrow}_{-}(F(\bar{x}),F'(\bar{x})d)))$. 
For simplicity, define
$
\varphi_{g}(
x,\mu)(d):=\inf\limits_{w\in\mathcal{R}^{m}}\{g^{\downdownarrows}_{-}(x;d,w)-\langle w,\mu\rangle\}
$.
It follows from  \cite[Proposition~3.30]{Bonnans2000} that $-\sup\limits_{(w,\gamma)\in\mathcal{T}^{2}}\{\langle\mu,w\rangle-\gamma\}=\varphi_{g}(
F(\bar{x}),\mu)(F'(\bar{x})d)$. Then 
SOSC at $(\bar{x},\bar{c})$ for problem \eqref{general-prob-equivalent} can be equivalently formulated as follows:
\begin{equation}
    \label{eq:SOSC}
    \sup_{\mu\in\Lambda(\bar{x})}\{\langle\mu,F''(\bar{x})(d,d)\rangle+\varphi_{g}(F(\bar{x}),\mu)(F'(\bar{x})d)\}> 0,\, \forall\, d\in\mathcal{C}(\bar{x})\setminus\{0\},
\end{equation}
which is also referred to as SOSC at $\bar{x}$ for problem $(P)$. Consequently, SOSC at $(\bar{x},\bar{c})$ for problem \eqref{general-prob-equivalent} coincides with SOSC at $\bar{x}$ for problem $(P)$.

Let $r:\mathcal{R}^{m}\rightarrow
(-\infty,+\infty]$ be an l.s.c. function. Given $\bar{x}\in\operatorname{dom}r$ and $\bar{u}\in\partial r(\bar{x})$, define a second-order variational function   $\Gamma_{r}(\bar{x},\bar{u}):\mathcal{R}^{m}\rightarrow\overline{\mathcal{R}}$ as 
\begin{equation*}
	\Gamma_{r}(\bar{x},\bar{u})(v):=\left\{\begin{array}{cl}\min\limits_{{d\in\mathcal{R}^{m},v=Ud}\atop{U\in\partial \operatorname{Prox}_{r}(\bar{x}+\bar{u})}}\langle v,d-v\rangle,&\mbox{if}\ v\in\mathop{\cup}\limits_{U\in\partial \operatorname{Prox}_{r}(\bar{x}+\bar{u})}\operatorname{rge}U,\\
		+\infty,&\mbox{otherwise}.
	\end{array}\right.
\end{equation*}
Although the definition of the second-order variational function  $\Gamma_{r}(\bar{x},\bar{u})$ appears complex, it becomes computationally tractable when the Clarke subdifferential of the proximal mapping $\operatorname{Prox}_{r}$ at $\bar{x}+\bar{u}$ admits a closed-form expression, then the second-order variational function  $\Gamma_{r}(\bar{x},\bar{u})$ can  be evaluated explicitly. Furthermore,
it follows from Proposition \ref{prop-partial-proximal-mapping} that $\Gamma_{r}(\bar{x},\bar{u})(v)\geq 0$ for any $v\in\operatorname{dom}\Gamma_{r}(\bar{x},\bar{u})$, provided that $r$ is a proper l.s.c. convex function. Now we formally present the definition of SSOSC for problem $(P)$.
\begin{definition}
    Let $\bar{x}$ be a stationary point of problem $(P)$ with $\Lambda(\bar{x})$ as the corresponding Lagrange multiplier set. We say that SSOSC holds at $\bar{x}$ if
	\begin{equation}
		\sup_{\mu\in\Lambda(\bar{x})}\{\langle\mu,F''(\bar{x})(d,d)\rangle+\Gamma_{g}(F(\bar{x}),\mu)(F'(\bar{x})d)\}> 0,\,\forall\,d\neq0.\label{eq:comp-prob-SSOSC}
	\end{equation}
\end{definition}
The most intricate component in SSOSC \eqref{eq:comp-prob-SSOSC} is the second term involving the second-order variational function $\Gamma_{g}(F(\bar{x}),\mu)$. For a given proper function $r$, we first establish foundational properties of  $\Gamma_{r}$ and the proximal mapping $\operatorname{Prox}_{r}$ at the point under consideration to enhance the understanding of SSOSC.
\begin{lemma}\label{lemma-Gamma-inequality}
	Suppose that the function $r:\mathcal{R}^{m}\rightarrow(-\infty,+\infty]$ is  proper, l.s.c. and convex. Given $\bar{x}\in\operatorname{dom}r$ and $\bar{u}\in\partial r(\bar{x})$, for any $U\in\partial\operatorname{Prox}_{r}(\bar{x}+\bar{u})$ and $v,d\in\mathcal{R}^{m}$ which satisfy $v=U(v+d)$, we have
	\begin{equation}		
		\Gamma_{r}(\bar{x},\bar{u})(v)\leq\langle v,d\rangle.\label{eq:corollary-result-2}
	\end{equation}
	Furthermore, one also has
	\begin{equation}\label{eq:prop8-result-3}
		\mathop{\cup}_{U\in\partial \operatorname{Prox}_{r}(\bar{x}+\bar{u})}\operatorname{ker}U=\{v\, |\, \Gamma_{r^{*}}(\bar{u},\bar{x})(v)=0\}.
	\end{equation}
\end{lemma}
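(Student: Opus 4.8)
The plan is to treat the two assertions separately, since \eqref{eq:corollary-result-2} is essentially a feasibility observation while \eqref{eq:prop8-result-3} is the substantive claim. For \eqref{eq:corollary-result-2}, given $U\in\partial\operatorname{Prox}_{r}(\bar{x}+\bar{u})$ and $v,d$ with $v=U(v+d)$, I would simply note that the pair $(d',U)$ with $d':=v+d$ is admissible in the minimization defining $\Gamma_{r}(\bar{x},\bar{u})(v)$, because $v=Ud'$ and $U\in\partial\operatorname{Prox}_{r}(\bar{x}+\bar{u})$. Hence $v\in\operatorname{rge}U$ lies in $\operatorname{dom}\Gamma_{r}(\bar{x},\bar{u})$, and the minimum is bounded above by the value at this pair, namely $\langle v,d'-v\rangle=\langle v,d\rangle$. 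This requires no further input.

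For \eqref{eq:prop8-result-3}, the engine is the Moreau identity together with two structural facts about the generalized Jacobian of a proximal map. First, writing $\bar{w}:=\bar{x}+\bar{u}$ (which is also the evaluation point of $\Gamma_{r^{*}}(\bar{u},\bar{x})$, since $\bar{u}\in\operatorname{dom}r^{*}$, $\bar{x}\in\partial r^{*}(\bar{u})$ and $\bar{u}+\bar{x}=\bar{w}$), the identity $\operatorname{Prox}_{r}+\operatorname{Prox}_{r^{*}}=I$ yields, after differentiating at differentiability points and passing to limits and convex hulls, the correspondence $V\in\partial\operatorname{Prox}_{r^{*}}(\bar{w})\Longleftrightarrow I-V\in\partial\operatorname{Prox}_{r}(\bar{w})$. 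Second, every element of $\partial\operatorname{Prox}_{r}(\bar{w})$ (equivalently of $\partial\operatorname{Prox}_{r^{*}}(\bar{w})$) is symmetric and positive semidefinite with spectrum in $[0,1]$, because $\operatorname{Prox}_{r}=\nabla e_{r^{*}}$ is the gradient of a convex $C^{1,1}$ function. I would also record that $\Gamma_{r^{*}}(\bar{u},\bar{x})\geq 0$ on its domain, which follows directly from Proposition \ref{prop-partial-proximal-mapping} applied to $r^{*}$: for $V\in\partial\operatorname{Prox}_{r^{*}}(\bar{w})$ and $v=Vd$ one has $\langle v,d-v\rangle=\langle Vd,d-Vd\rangle\geq 0$.

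With these in hand I would prove the two inclusions. For $\subseteq$: if $Uv=0$ with $U\in\partial\operatorname{Prox}_{r}(\bar{w})$, set $V:=I-U\in\partial\operatorname{Prox}_{r^{*}}(\bar{w})$; then $Vv=v$, so $v\in\operatorname{rge}V\subseteq\operatorname{dom}\Gamma_{r^{*}}(\bar{u},\bar{x})$ and the pair $(d,V)=(v,V)$ gives $\Gamma_{r^{*}}(\bar{u},\bar{x})(v)\leq\langle v,v-v\rangle=0$, whence $\Gamma_{r^{*}}(\bar{u},\bar{x})(v)=0$ by nonnegativity. For $\supseteq$: if $\Gamma_{r^{*}}(\bar{u},\bar{x})(v)=0$ is attained at $(d^{*},V^{*})$ with $v=V^{*}d^{*}$ and $\langle v,d^{*}-v\rangle=0$, I would rewrite $0=\langle V^{*}d^{*},d^{*}-V^{*}d^{*}\rangle=\langle d^{*},(V^{*}-(V^{*})^{2})d^{*}\rangle$; since $V^{*}$ is symmetric with spectrum in $[0,1]$, the matrix $V^{*}-(V^{*})^{2}$ is symmetric positive semidefinite, so a vanishing quadratic form forces $(V^{*}-(V^{*})^{2})d^{*}=0$, i.e. $V^{*}v=(V^{*})^{2}d^{*}=V^{*}d^{*}=v$. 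Then $U:=I-V^{*}\in\partial\operatorname{Prox}_{r}(\bar{w})$ satisfies $Uv=v-V^{*}v=0$, so $v\in\mathcal{N}(U)$.

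The main obstacle, and the step I would take most care over, is this $\supseteq$ inclusion: the passage from $\langle v,d^{*}-v\rangle=0$ to the kernel membership $V^{*}v=v$ genuinely relies on the symmetry and the $0\preceq V^{*}\preceq I$ spectral bound of the generalized Jacobian—without symmetry a vanishing quadratic form does not localize to the kernel. I would therefore make sure these structural properties of $\partial\operatorname{Prox}_{r^{*}}$ are established or cited, and I would confirm that the minimum defining $\Gamma_{r^{*}}(\bar{u},\bar{x})(v)$ is genuinely attained so that the equality case is available; note that for fixed symmetric $V^{*}$ the inner objective $\langle v,d-v\rangle$ is constant over the affine set $\{d\,|\,V^{*}d=v\}$ when $v\in\operatorname{rge}V^{*}$, which reduces attainment to the outer minimization over the compact set $\partial\operatorname{Prox}_{r^{*}}(\bar{w})$.
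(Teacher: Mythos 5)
Your proof is correct and follows essentially the same route as the paper: feasibility of the pair $(v+d,U)$ in the defining minimum for \eqref{eq:corollary-result-2}, and for \eqref{eq:prop8-result-3} the Moreau-identity correspondence $U\leftrightarrow I-U$ combined with the nonnegativity of $\Gamma_{r^{*}}(\bar{u},\bar{x})$ from Proposition \ref{prop-partial-proximal-mapping} in one direction, and the vanishing of the quadratic form of $V-V^{2}$ forcing $(I-V)v=0$ in the other. Your explicit appeal to the symmetry and the $0\preceq V\preceq I$ spectral bound of elements of $\partial\operatorname{Prox}_{r^{*}}(\bar{x}+\bar{u})$ (via $\operatorname{Prox}_{r}=\nabla e_{r^{*}}$), and your remark on attainment of the minimum, make explicit two points the paper leaves implicit when it deduces $(V-V^{2})d=0$ from positive semidefiniteness, so your version is if anything slightly more complete.
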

\begin{proof}
	The result \eqref{eq:corollary-result-2} follows directly from the definition of $\Gamma_{r}(\bar{x},\bar{u})$. We proceed to prove the second result \eqref{eq:prop8-result-3}. For any $v'\in\mathop{\cup}\limits_{U\in\partial \operatorname{Prox}_{r}(\bar{x}+\bar{u})}\operatorname{ker}U$, there exists $U\in\partial\operatorname{Prox}_{r}(\bar{x}+\bar{u})$ such that $Uv'=0$, which is also equivalent to $v'=(I-U)v'$ with $I-U\in\partial\operatorname{Prox}_{r^{*}}(\bar{x}+\bar{u})$. Applying the result \eqref{eq:corollary-result-2} to $\Gamma_{r^{*}}(\bar{u},\bar{x})$ with $v=v'$ and $d=0$, then  $\Gamma_{r^{*}}(\bar{u},\bar{x})(v')\leq0$. By \cite[Theorem~12.2]{Rockafellar1970} and Proposition \ref{prop-partial-proximal-mapping}, the function $r^{*}$ is a proper l.s.c. convex function and $\Gamma_{r^{*}}(\bar{u},\bar{x})(v)\geq0$ for all $v\in\mathcal{R}^{m}$. Therefore, it follows that $\Gamma_{r^{*}}(\bar{u},\bar{x})(v')=0$. Conversely, if $\Gamma_{r^{*}}(\bar{u},\bar{x})(v)=0$, there exist $V\in\partial\operatorname{Prox}_{r^{*}}(\bar{x}+\bar{u})$ and $d\in\mathcal{R}^{m}$ with $v=Vd$ such that $\Gamma_{r^{*}}(\bar{u},\bar{x})(v)=\langle v,d-v\rangle=0$. Then $\langle Vd,(I-V)d\rangle=\langle d,(V-V^{2})d\rangle=0$. By Proposition \ref{prop-partial-proximal-mapping}, the matrix $V-V^{2}$ is positive semidefinite and then $(V-V^{2})d=0$ which also implies that $(I-V)v=0$. Consequently this confirms the existence of  $U=(I-V)\in\partial\operatorname{Prox}_{r}(\bar{x}+\bar{u})$ with $v\in\operatorname{ker}U$ and completes the proof of \eqref{eq:prop8-result-3}.
\end{proof}

\begin{lemma}\label{lemma-range-partial-derivative}
	Let   $r:\mathcal{R}^{m}\rightarrow(-\infty,+\infty]$ be a proper l.s.c. convex function with $\bar{x}\in\operatorname{dom}r$ and $\bar{u}\in\partial r(\bar{x})$. We have
	\begin{equation*}
		\operatorname{dom}D^{*}(\partial r)(\bar{x},\bar{u})\subseteq\operatorname{conv}\mathop{\cup}_{U\in\partial\operatorname{Prox}_{r}(\bar{x}+\bar{u})}\operatorname{rge}U\subseteq\operatorname{aff}(\operatorname{rge}(D\operatorname{Prox}_{r}(\bar{x}+\bar{u}))).
 	\end{equation*}
    \end{lemma}	
\begin{proof}
	We first show the following equality
	\begin{equation}\label{eq:critical-cone-polar-equivalent}
		(\operatorname{rge}(D\operatorname{Prox}_{r}(\bar{x}+\bar{u})))^{\circ}=\{d\, |\, 0\in\widehat{D}^{*}\operatorname{Prox}_{r}(\bar{x}+\bar{u})(-d)\}.
	\end{equation}
	Let $y$ be an arbitrary element in the left-hand side set of \eqref{eq:critical-cone-polar-equivalent}. For any $(w,z)\in\mathcal{T}_{\operatorname{gph} \operatorname{Prox}_{r}}(\bar{x}+\bar{u},\bar{x})$, particularly with $z\in\operatorname{rge}(D\operatorname{Prox}_{r}(\bar{x}+\bar{u}))$, it holds that  $\langle(0,y),(w,z)\rangle=\langle y,z\rangle\leq 0$.
	This implies that $(0,y)\in\widehat{\mathcal{N}}_{\operatorname{gph} \operatorname{Prox}_{r}}(\bar{x}+\bar{u},\bar{x})$ and consequently 
	$0\in\widehat{D}^{*}\operatorname{Prox}_{r}(\bar{x}+\bar{u})(-y)$. Conversely, suppose $\tilde{y}$ satisfies $0\in\widehat{D}^{*}\operatorname{Prox}_{r}(\bar{x}+\bar{u})(-\tilde{y})$. Given any $z\in\operatorname{rge}(D\operatorname{Prox}_{r}(\bar{x}+\bar{u}))$, there exists $w\in\mathcal{R}^{m}$ such that  $(w,z)\in\mathcal{T}_{\operatorname{gph}\operatorname{Prox}_{r}}(\bar{x}+\bar{u},\bar{x})$. Then $
	\langle\tilde{y},z\rangle=\langle(0,\tilde{y}),(w,z)\rangle\leq 0$,
	which confirms that $\tilde{y}\in(\operatorname{rge}(D\operatorname{Prox}_{r}(\bar{x}+\bar{u})))^{\circ}$, as required.
	
	To establish the desired result, the equality  \eqref{eq:critical-cone-polar-equivalent} tells us that
	\begin{equation}
	\label{eq:polar-aff-rge-graphical-proximal-mapping}	(\operatorname{aff}(\operatorname{rge}(D\operatorname{Prox}_{r}(\bar{x}+\bar{u}))))^{\circ}=\operatorname{lin}\{d\, |\, 0\in\widehat{D}^{*}\operatorname{Prox}_{r}(\bar{x}+\bar{u})(-d)\}.
	\end{equation}
	For any $d\in\operatorname{lin}\{d\, |\, 0\in\widehat{D}^{*}\operatorname{Prox}_{r}(\bar{x}+\bar{u})(-d)\}$, observing the definition of regular normal cone $\widehat{\mathcal{N}}_{\operatorname{gph} \operatorname{Prox}_{r}}(\bar{x}+\bar{u},\bar{x})$ and
	\begin{equation*}
		v\in\widehat{D}^{*}\operatorname{Prox}_{r}(\bar{x}+\bar{u})(y)\, \Longleftrightarrow\, \langle v,x-\bar{x}-\bar{u}\rangle\leq\langle y,u-\bar{x}\rangle+o(\|(x,u)-(\bar{x}+\bar{u},\bar{x})\|),\\ \forall\, u=\operatorname{Prox}_{r}(x),
	\end{equation*}
	 it follows that
	\begin{equation}\label{eq:prox-result}
		\langle d,\operatorname{Prox}_{r}(x)-\operatorname{Prox}_{r}(\bar{x}+\bar{u})\rangle=o(\|(x,u)-(\bar{x}+\bar{u},\bar{x})\|)=o(\|x-\bar{x}-\bar{u}\|).
	\end{equation}
	Let $\mathcal{D}_{F}$ denote the set of points where $\operatorname{Prox}_{r}$ is differentiable. For any sequence $\{x^{k}\}\subseteq\mathcal{D}_{F}$ with $x^{k}\rightarrow\bar{x}+\bar{u}$, the expansion $\operatorname{Prox}_{r}(x^{k})-\operatorname{Prox}_{r}(\bar{x}+\bar{u})=\operatorname{Prox}'_{r}(x^{k})(x^{k}-\bar{x}-\bar{u})+o(\|x^{k}-\bar{x}-\bar{u}\|)$ is valid for sufficiently large $k$. Substituting this into \eqref{eq:prox-result} yields
	$
	\langle d,\operatorname{Prox}'_{r}(x^{k})(x^{k}-\bar{x}-\bar{u})\rangle=o(\|x^{k}-\bar{x}-\bar{u}\|)$.
	Passing to the limit as $k\rightarrow+\infty$, it follows that  $Ud=0$ for any $U\in\partial_{B}\operatorname{Prox}_{r}(\bar{x}+\bar{u})$. Then taking into account \eqref{eq:polar-aff-rge-graphical-proximal-mapping} we obtain
	\begin{equation*}
		(\operatorname{aff}(\operatorname{rge}(D\operatorname{Prox}_{r}(\bar{x}+\bar{u}))))^{\circ}\subseteq\mathop{\cap}_{U\in\partial_{B} \operatorname{Prox}_{r}(\bar{x}+\bar{u})}\operatorname{ker}U.
	\end{equation*}
    By applying the polar operation to both sides of the above inclusion, it reduces to
	\begin{equation*}
		\begin{aligned}
		\operatorname{aff}(\operatorname{rge}(D\operatorname{Prox}_{r}(\bar{x}+\bar{u})))&\supseteq\sum_{U\in\partial_{B}\operatorname{Prox}_{r}(\bar{x}+\bar{u})}\operatorname{rge}U^{*}=\sum_{U\in\partial_{B}\operatorname{Prox}_{r}(\bar{x}+\bar{u})}\operatorname{rge}U\\
		&=\operatorname{conv}\{\operatorname{rge}(U)\,|\, U\in\partial_{B} \operatorname{Prox}_{r}(\bar{x}+\bar{u})\}\\
		&=\operatorname{conv}\mathop{\cup}_{U\in\partial_{B} \operatorname{Prox}_{r}(\bar{x}+\bar{u})}\operatorname{rge}U=\operatorname{conv}\mathop{\cup}_{U\in\partial \operatorname{Prox}_{r}(\bar{x}+\bar{u})}\operatorname{rge}U,
		\end{aligned}
	\end{equation*}
	 where the second equality follows from  \cite[Theorem~3.8]{Rockafellar1970}.   The desired result follows immediately from the relationship
	 $\operatorname{dom}D^{*}(\partial r)(\bar{x},\bar{u})=-\operatorname{rge}D^{*}\operatorname{Prox}_{r}(\bar{x}+\bar{u})$ and \cite[Theorem~13.52]{Rockafellar1998}.
\end{proof}

\begin{remark}
	Verifying the inverse inclusion in Lemma \ref{lemma-range-partial-derivative} remains challenging. If valid, Proposition \ref{prop-domd2g} shows the first equality of Assumption \ref{assumption-2} would be unnecessary. For practical verification, we provide an easily verifiable sufficient condition for Assumption \ref{assumption-2} in the appendix. 
\end{remark}

\begin{corollary}\label{corollary-2}
	Let  $r:\mathcal{R}^{m}\rightarrow(-\infty,+\infty]$ be a proper l.s.c. convex function with $\bar{x}\in\operatorname{dom}r$ and $\bar{u}\in\partial r(\bar{x})$. Then we have
	\begin{equation}
	\label{eq:null-coderivative-proximal-mapping}	\begin{aligned}
		\{d\, |\, 0\in D^{*}\operatorname{Prox}_{r}(\bar{x}+\bar{u})(d)\}&=\mathop{\cup}_{W\in\partial\operatorname{Prox}_{r}(\bar{x}+\bar{u})}\operatorname{ker}W\\
		&=\operatorname{aff}\{d\, |\, r^{*\downarrow}_{-}(\bar{u},d)=\langle\bar{x},d\rangle\}\cap(\mathop{\cup}_{W\in\partial\operatorname{Prox}_{r}(\bar{x}+\bar{u})}\operatorname{ker}W).
		\end{aligned}
	\end{equation}
\end{corollary}
\begin{proof}
	We first establish the second equality in \eqref{eq:null-coderivative-proximal-mapping}. For any
	$
	d\in\mathop{\cup}\limits_{W\in\partial \operatorname{Prox}_{r}(\bar{x}+\bar{u})}\operatorname{ker}W$, there exists $W\in\partial\operatorname{Prox}_{r}(\bar{x}+\bar{u})$ such that $Wd=0$, which is also equivalent to $d=(I-W)d$ with $I-W\in\partial\operatorname{Prox}_{r^{*}}(\bar{x}+\bar{u})$. Then the inclusion
	\begin{equation}
	\label{inclusion-ker-partial-proximal-mapping}	\mathop{\cup}_{W\in\partial\operatorname{Prox}_{r}(\bar{x}+\bar{u})}\operatorname{ker}W\subseteq\mathop{\cup}_{U\in\partial\operatorname{Prox}_{r^{*}}(\bar{x}+\bar{u})}\operatorname{rge}U
	\end{equation}
	holds.
	It follows from \cite[Proposition~13.5]{Rockafellar1998}, \cite[Theorem~12.2]{Rockafellar1970} and \cite[Proposition~2.126]{Bonnans2000} that
	\begin{equation*}
		\begin{aligned}
	\operatorname{rge}(D\operatorname{Prox}_{r^{*}}(\bar{x}+\bar{u}))&=\operatorname{dom}D(\partial r^{*})(\bar{u},\bar{x})\subseteq\{d\,|\,r^{*\downarrow}_{-}(\bar{u},d)=\langle\bar{x},d\rangle\}=\mathcal{N}_{\partial r^{*}(\bar{u})}(\bar{x})
	\end{aligned}
	\end{equation*}
	and the set $\mathcal{N}_{\partial r^{*}(\bar{u})}(\bar{x})$ is convex. Combining this with \eqref{inclusion-ker-partial-proximal-mapping} and applying Lemma \ref{lemma-range-partial-derivative} to $r^{*}$ at $\bar{u}$ with $\bar{x}$, it obtains the following inclusion relationship
	\begin{equation*}
		\mathop{\cup}_{W\in\partial\operatorname{Prox}_{r}(\bar{x}+\bar{u})}\operatorname{ker}W\subseteq\operatorname{aff}\{d\, |\, r^{*\downarrow}_{-}(\bar{u},d)=\langle\bar{x},d\rangle\},
	\end{equation*}
	as required.
    
	Now we show the first equality in \eqref{eq:null-coderivative-proximal-mapping}. For any  $d\in\mathop{\cup}\limits_{W\in\partial\operatorname{Prox}_{r}(\bar{x}+\bar{u})}\operatorname{ker}W$, by \cite[Theorem~13.52]{Rockafellar1998} and Proposition \ref{prop-partial-proximal-mapping}, it yields that $0\in \operatorname{conv}D^{*}\operatorname{Prox}_{r}(\bar{x}+\bar{u})(d)$ and $\langle d,z\rangle\geq0$ for any $z\in\operatorname{conv}D^{*}\operatorname{Prox}_{r}(\bar{x}+\bar{u})(d)$. Consequently,  \begin{equation}\label{eq:0=max_dWd}
		\begin{aligned}
			0&=\max\{\langle -d,\operatorname{conv}D^{*}\operatorname{Prox}_{r}(\bar{x}+\bar{u})(d)\rangle\}\\
			&=\max\{\sum_{i}\lambda_{i}\langle-d,W_{i}d\rangle, W_{i}\in\partial_{B}\operatorname{Prox}_{r}(\bar{x}+\bar{u}), \sum_{i}\lambda_{i}=1, \lambda_{i}\geq0\}\\
			&=\max\{\langle-d,Wd\rangle, W\in\partial_{B}\operatorname{Prox}_{r}(\bar{x}+\bar{u})\},
		\end{aligned}
	\end{equation}
	where the second equality in  \eqref{eq:0=max_dWd} is based on Carath\'{e}odory's theorem (see e.g., \cite[Theorem~17.1]{Rockafellar1970}).
	Then there exists $U\in\partial_{B}\operatorname{Prox}_{r}(\bar{x}+\bar{u})$ such that $Ud=0$. Taking into account \cite[Proposition~2.4]{Gfrerer2022} and observing $0=Ud\in D^{*}\operatorname{Prox}_{r}(\bar{x}+\bar{u})(d)$, we obtain the inclusion relationship
	\begin{equation*}
		\mathop{\cup}_{W\in\partial\operatorname{Prox}_{r}(\bar{x}+\bar{u})}\operatorname{ker}W\subseteq\{d\, |\, 0\in D^{*}\operatorname{Prox}_{r}(\bar{x}+\bar{u})(d)\}.
	\end{equation*}
	The converse inclusion is an immediate consequence of \cite[Theorem~13.52]{Rockafellar1998} and this completes the proof.
\end{proof}

Given $\bar{x}\in \operatorname{dom}r$ and $\bar{u}\in\partial r(\bar{x})$, Lemma \ref{lemma-range-partial-derivative} and  Corollary \ref{corollary-2} establish the relationship between $\partial \operatorname{Prox}_{r}(\bar{x}+\bar{u})$ and $D\operatorname{Prox}_{r}(\bar{x}+\bar{u})$, as well as $D^{*}\operatorname{Prox}_{r}(\bar{x}+\bar{u})$, respectively. Based on these results, we further derive additional results that are essential for proving the main theorems of this paper.
\begin{lemma}\label{lemma-relation-range-partial} 
	Let $r:\mathcal{R}^{m}\rightarrow (-\infty,+\infty]$ be a proper l.s.c. convex function with $\bar{x}\in\operatorname{dom}r$ and $\bar{u}\in\partial r(\bar{x})$. Suppose that the function $r$ is parabolically epi-differentiable at $\bar{x}$ for every $w\in\{d\, |\, r^{\downarrow}_{-}(\bar{x},d)=\langle \bar{u},d\rangle\}$, parabolically regular at $\bar{x}$ for $\bar{u}$, and satisfies Assumption \ref{assumption-2} at $\bar{x}$ for $\bar{u}$. One has
	\begin{equation*}
		\operatorname{conv}\mathop{\cup}_{U\in\partial \operatorname{Prox}_{r}(\bar{x}+\bar{u})}\operatorname{rge}U=
		\mathop{\cup}_{U\in\partial \operatorname{Prox}_{r}(\bar{x}+\bar{u})}\operatorname{rge}U
		=\operatorname{aff}\{d\, |\, r^{\downarrow}_{-}(\bar{x},d)=\langle\bar{u},d\rangle\}.
	\end{equation*}
\end{lemma}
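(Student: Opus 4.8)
The plan is to squeeze the middle object $\operatorname{conv}\big(\bigcup_{U}\operatorname{rge}U\big)$ between two copies of the subspace $\operatorname{aff}K$, where $K:=\{d\mid r^{\downarrow}_{-}(\bar x,d)=\langle\bar u,d\rangle\}$, using Lemma \ref{lemma-range-partial-derivative}, and then to strip off the convex hull by exhibiting a single element of $\partial\operatorname{Prox}_{r}(\bar x+\bar u)$ whose range is already all of $\operatorname{aff}K$. First I would invoke Lemma \ref{lemma-range-partial-derivative}: since $r$ is convex it yields
\[
\operatorname{dom}D^{*}(\partial r)(\bar x,\bar u)\subseteq\operatorname{conv}\Big(\bigcup_{U}\operatorname{rge}U\Big)\subseteq\operatorname{aff}\big(\operatorname{rge}(D\operatorname{Prox}_{r}(\bar x+\bar u))\big).
\]
The left-hand side equals $\operatorname{aff}K$ by the first identity of Assumption \ref{assumption-2}. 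For the right-hand side, Proposition \ref{prop-5} shows that every $h=\operatorname{Prox}'_{r}(\bar x+\bar u,w)$ satisfies $r^{\downarrow}_{-}(\bar x,h)=\langle\bar u,h\rangle$ (apply it with $d:=w-h$, so that $w=h+d$), whence $\operatorname{rge}(D\operatorname{Prox}_{r}(\bar x+\bar u))\subseteq K$ and therefore $\operatorname{aff}(\operatorname{rge}(D\operatorname{Prox}_{r}(\bar x+\bar u)))\subseteq\operatorname{aff}K$. (In fact the hypotheses give $\operatorname{dom}D(\partial r)(\bar x,\bar u)=K$ via Proposition \ref{prop-domd2g}, so this range equals $K$, but only the inclusion is needed.) The two inclusions collapse to $\operatorname{conv}\big(\bigcup_{U}\operatorname{rge}U\big)=\operatorname{aff}K$, which is the first asserted equality.

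Next I would remove the convex hull. Every $U\in\partial\operatorname{Prox}_{r}(\bar x+\bar u)$ is symmetric — the proximal mapping of a convex function is a gradient map, so its Clarke Jacobian consists of symmetric matrices — and by Proposition \ref{prop-partial-proximal-mapping} it satisfies $\langle Ud,d\rangle\ge\|Ud\|^{2}$, so $0\preceq U\preceq I$ and each $\operatorname{rge}U=\mathcal{N}(U)^{\perp}$ is a subspace. Since the convex hull of a union of subspaces is their Minkowski sum, $\operatorname{conv}\big(\bigcup_{U}\operatorname{rge}U\big)=\sum_{U}\operatorname{rge}U=\operatorname{aff}K$. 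As $\operatorname{aff}K$ is finite-dimensional I can pick finitely many $U_{1},\dots,U_{k}\in\partial\operatorname{Prox}_{r}(\bar x+\bar u)$ with $\sum_{i=1}^{k}\operatorname{rge}U_{i}=\operatorname{aff}K$ and set $U^{\star}:=\tfrac1k\sum_{i=1}^{k}U_{i}$, which again lies in $\partial\operatorname{Prox}_{r}(\bar x+\bar u)$ by convexity. If $U^{\star}d=0$, then $\sum_{i}\langle U_{i}d,d\rangle=0$ with every summand nonnegative, forcing $U_{i}d=0$ for all $i$; hence $\mathcal{N}(U^{\star})=\bigcap_{i}\mathcal{N}(U_{i})$ and, by symmetry, $\operatorname{rge}U^{\star}=\mathcal{N}(U^{\star})^{\perp}=\sum_{i}\operatorname{rge}U_{i}=\operatorname{aff}K$. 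Therefore $\operatorname{aff}K=\operatorname{rge}U^{\star}\subseteq\bigcup_{U}\operatorname{rge}U\subseteq\operatorname{conv}\big(\bigcup_{U}\operatorname{rge}U\big)=\operatorname{aff}K$, which gives the remaining equality.

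The squeeze is routine once the inclusion $\operatorname{rge}(D\operatorname{Prox}_{r}(\bar x+\bar u))\subseteq K$ is recorded, so I expect the delicate part to be the second step: showing that the bare union of ranges, not merely its convex hull, already fills $\operatorname{aff}K$. This rests entirely on the symmetric positive semidefinite structure of the generalized Jacobian, which is what guarantees $\operatorname{rge}U=\mathcal{N}(U)^{\perp}$ and, crucially, that averaging the $U_{i}$ produces no cancellation so that $\operatorname{rge}\big(\sum_{i}U_{i}\big)=\sum_{i}\operatorname{rge}U_{i}$. I would take particular care in justifying the symmetry of the Jacobian elements and in deducing $\mathcal{N}(U^{\star})=\bigcap_{i}\mathcal{N}(U_{i})$ from the quadratic inequality of Proposition \ref{prop-partial-proximal-mapping}.
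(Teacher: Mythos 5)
Your proof is correct, and the first half coincides with the paper's: both squeeze $\operatorname{conv}\bigl(\bigcup_{U}\operatorname{rge}U\bigr)$ between $\operatorname{dom}D^{*}(\partial r)(\bar x,\bar u)=\operatorname{aff}\{d\mid r^{\downarrow}_{-}(\bar x,d)=\langle\bar u,d\rangle\}$ (Assumption \ref{assumption-2}) and $\operatorname{aff}\bigl(\operatorname{rge}(D\operatorname{Prox}_{r}(\bar x+\bar u))\bigr)$, the latter computed via Proposition \ref{prop-domd2g}. Where you diverge is in stripping the convex hull. The paper gets the inclusion $\operatorname{aff}\{d\mid r^{\downarrow}_{-}(\bar x,d)=\langle\bar u,d\rangle\}\subseteq\bigcup_{U}\operatorname{rge}U$ in one step, from $\operatorname{dom}D^{*}(\partial r)(\bar x,\bar u)=-\operatorname{rge}(D^{*}\operatorname{Prox}_{r}(\bar x+\bar u))\subseteq\operatorname{rge}(\operatorname{conv}D^{*}\operatorname{Prox}_{r}(\bar x+\bar u))=\bigcup_{U}\operatorname{rge}U$, the last identity being Corollary 13.53 of Rockafellar--Wets; no convex hull ever needs to be removed. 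You instead exhibit a single $U^{\star}=\frac1k\sum_i U_i\in\partial\operatorname{Prox}_{r}(\bar x+\bar u)$ whose range is already the whole affine hull, using that each $U_i$ is symmetric with $0\preceq U_i\preceq I$ so that averaging produces no cancellation ($\mathcal N(U^{\star})=\bigcap_i\mathcal N(U_i)$, hence $\operatorname{rge}U^{\star}=\sum_i\operatorname{rge}U_i$). This is a legitimate and rather more concrete alternative; its only extra cost is the symmetry of the elements of $\partial\operatorname{Prox}_{r}(\bar x+\bar u)$, which you should justify explicitly (e.g., $\operatorname{Prox}_{r}=\nabla\bigl(r+\tfrac12\|\cdot\|^{2}\bigr)^{*}$ is the gradient of a convex $C^{1,1}$ function, so wherever it is differentiable its Jacobian is a symmetric Hessian by Theorem 13.42 of \cite{Rockafellar1998}, and symmetry passes to convex combinations). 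Note the paper itself tacitly uses this symmetry elsewhere (the step ``$\langle d,(V-V^{2})d\rangle=0$ and $V-V^{2}$ positive semidefinite imply $(V-V^{2})d=0$'' in Lemma \ref{lemma-Gamma-inequality} needs it), so your argument does not import anything foreign. Your route buys independence from the precise form of Corollary 13.53 at the cost of the symmetry lemma; the paper's route is shorter but leans more heavily on coderivative calculus.
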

\begin{proof}
	It follows from Proposition \ref{prop-domd2g}, \cite[Theorem~3.8]{Mohammadi2020} and \eqref{eq:inverse-of-DS} that
	\begin{equation*}%\label{eq:range-graphical-derivative-proximal}
		\operatorname{aff}(\operatorname{rge}(D\operatorname{Prox}_{r}(\bar{x}+\bar{u})))=\operatorname{aff}(\operatorname{dom}D(\partial r)(\bar{x},\bar{u}))=\operatorname{aff}\{d\, |\, r^{\downarrow}_{-}(\bar{x},d)=\langle\bar{u},d\rangle\}.
	\end{equation*}
	Applying Assumption \ref{assumption-2},  \cite[Theorem~13.52]{Rockafellar1998}, and noticing that
	\begin{equation*}
		\begin{aligned}
		\operatorname{dom}D^{*}\partial r(\bar{x},\bar{u})&=-\operatorname{rge}(D^{*}\operatorname{Prox}_{r}(\bar{x}+\bar{u}))\subseteq\mathop{\cup}_{U\in\partial \operatorname{Prox}_{r}(\bar{x}+\bar{u})}\operatorname{rge}U,
		\end{aligned}
	\end{equation*}
	we have
	\begin{equation*}
		\operatorname{aff}\{d\, |\, r^{\downarrow}_{-}(\bar{x},d)=\langle\bar{u},d\rangle\}\subseteq\mathop{\cup}_{U\in\partial \operatorname{Prox}_{r}(\bar{x}+\bar{u})}\operatorname{rge}U\subseteq\operatorname{conv}(\mathop{\cup}_{U\in\partial \operatorname{Prox}_{r}(\bar{x}+\bar{u})}\operatorname{rge}U).
	\end{equation*}
	Together with Lemma \ref{lemma-range-partial-derivative}, this  completes the proof.
\end{proof}

\begin{proposition}\label{prop-4} 
	Let $r:\mathcal{R}^{m}\rightarrow (-\infty,+\infty]$ be a proper l.s.c. convex function with  $\bar{x}\in\operatorname{dom}r$ and $\bar{u}\in\partial r(\bar{x})$. Suppose that the function $r$ is parabolically epi-differentiable at $\bar{x}$ for every $w\in\{d\, |\, r^{\downarrow}_{-}(\bar{x},d)=\langle \bar{u},d\rangle\}$, parabolically regular at $\bar{x}$ for $\bar{u}$, and satisfies  Assumption \ref{assumption-2} at $\bar{x}$ for $\bar{u}$. Then we have
	\begin{equation*}
		\{d\, |\, r^{\downarrow}_{-}(\bar{x},d)=-r^{\downarrow}_{-}(\bar{x},-d)\}^{\circ}=\operatorname{aff}\{d\, |\, r^{*\downarrow}_{-}(\bar{u},d)=\langle \bar{x},d\rangle\}\cap\{d\, |\, \Gamma_{r^{*}}(\bar{u},\bar{x})(d)=0\}.
	\end{equation*}
\end{proposition}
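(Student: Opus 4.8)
The plan is to reduce both sides to the single set $\bigcup_{W\in\partial\operatorname{Prox}_{r}(\bar{x}+\bar{u})}\mathcal{N}(W)$ and then invoke the results already proved. Write $C:=\{d\,|\,r^{\downarrow}_{-}(\bar{x},d)=\langle\bar{u},d\rangle\}$ and $C^{*}:=\{d\,|\,r^{*\downarrow}_{-}(\bar{u},d)=\langle\bar{x},d\rangle\}$. Since $r$ is convex and $\bar{u}\in\partial r(\bar{x})$, the lower directional epiderivative $r^{\downarrow}_{-}(\bar{x},\cdot)$ coincides with the lsc sublinear directional derivative and satisfies $r^{\downarrow}_{-}(\bar{x},d)\geq\langle\bar{u},d\rangle$ for all $d$, so $C$ is a closed convex cone. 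My first step is to identify the set on the left of the claimed identity: for any $d$ the sandwich $-r^{\downarrow}_{-}(\bar{x},-d)\leq\langle\bar{u},d\rangle\leq r^{\downarrow}_{-}(\bar{x},d)$ shows that $r^{\downarrow}_{-}(\bar{x},d)=-r^{\downarrow}_{-}(\bar{x},-d)$ holds exactly when both $d\in C$ and $-d\in C$. Thus the left-hand set equals the lineality space $\operatorname{lin}C=C\cap(-C)$, and the left-hand side of the proposition is $(\operatorname{lin}C)^{\circ}$.

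The second step is the cone-duality identity $(\operatorname{lin}C)^{\circ}=\operatorname{aff}(C^{\circ})$. Because $\operatorname{lin}C$ is a subspace, its polar cone equals its orthogonal complement; and for the closed convex cone $C$ one has $\operatorname{lin}C=(\operatorname{span}C^{\circ})^{\perp}$, whence $(\operatorname{lin}C)^{\perp}=\operatorname{span}(C^{\circ})=\operatorname{aff}(C^{\circ})$ in finite dimensions, the affine hull of the cone $C^{\circ}$ being a linear subspace. This converts the left-hand side into $\operatorname{aff}(C^{\circ})$.

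The heart of the argument is then to show $\operatorname{aff}(C^{\circ})=\bigcup_{W}\mathcal{N}(W)$. By Proposition \ref{prop-2} one has $C^{\circ}=D(\partial r)(\bar{x},\bar{u})(0)$, and taking $h=0$ in Proposition \ref{prop-5} gives $D(\partial r)(\bar{x},\bar{u})(0)=\{d\,|\,\operatorname{Prox}'_{r}(\bar{x}+\bar{u},d)=0\}$. Invoking the second equation of Assumption \ref{assumption-2} turns its affine hull into $\{d\,|\,0\in D^{*}\operatorname{Prox}_{r}(\bar{x}+\bar{u})(d)\}$, and Corollary \ref{corollary-2} identifies the latter with $\bigcup_{W}\mathcal{N}(W)$. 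Chaining these equalities yields $\operatorname{aff}(C^{\circ})=\bigcup_{W}\mathcal{N}(W)$. I expect this chain to be the main obstacle, and specifically the use of Assumption \ref{assumption-2} to replace an \emph{inclusion} by the exact affine hull, since without that hypothesis Lemma \ref{lemma-range-partial-derivative} only guarantees $\operatorname{aff}(C^{\circ})\subseteq\bigcup_{W}\mathcal{N}(W)$.

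Finally I would simplify the right-hand side. Lemma \ref{lemma-Gamma-inequality} gives $\{d\,|\,\Gamma_{r^{*}}(\bar{u},\bar{x})(d)=0\}=\bigcup_{W}\mathcal{N}(W)$, while Corollary \ref{corollary-2} shows $\bigcup_{W}\mathcal{N}(W)=\operatorname{aff}(C^{*})\cap\bigcup_{W}\mathcal{N}(W)$, that is $\bigcup_{W}\mathcal{N}(W)\subseteq\operatorname{aff}(C^{*})$. Consequently the right-hand side $\operatorname{aff}(C^{*})\cap\{d\,|\,\Gamma_{r^{*}}(\bar{u},\bar{x})(d)=0\}$ collapses to $\bigcup_{W}\mathcal{N}(W)$ as well. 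Combining the three steps, both sides equal $\bigcup_{W}\mathcal{N}(W)$, which closes the proof.
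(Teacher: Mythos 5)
Your proof is correct and follows essentially the same route as the paper's: identify the left-hand set as $\operatorname{lin}\{d\,|\,r^{\downarrow}_{-}(\bar{x},d)=\langle\bar{u},d\rangle\}$ via the sandwich inequality, pass to polars, and use Propositions \ref{prop-2} and \ref{prop-5}, Assumption \ref{assumption-2}, Corollary \ref{corollary-2} and Lemma \ref{lemma-Gamma-inequality} to reduce both sides to $\bigcup_{W\in\partial\operatorname{Prox}_{r}(\bar{x}+\bar{u})}\mathcal{N}(W)$. The only cosmetic difference is that you write $(\operatorname{lin}C)^{\circ}=\operatorname{aff}(C^{\circ})$ directly, whereas the paper computes it as $C^{\circ}+(-C)^{\circ}$; these coincide.
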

\begin{proof}
    It follows directly from \cite[Theorem~8.30]{Rockafellar1998}  that 
	\begin{equation*}
		\begin{aligned}
		\{d\, |\, r^{\downarrow}_{-}(\bar{x},d)= -r^{\downarrow}_{-}(\bar{x},-d)\}&=\{d\, |\, r^{\downarrow}_{-}(\bar{x},d)=-r^{\downarrow}_{-}(\bar{x},-d)=\langle\bar{u},d\rangle\}\\
		&=\{d\, |\, r^{\downarrow}_{-}(\bar{x},d)=\langle \bar{u},d\rangle\}\cap\{d\,|\, r^{\downarrow}_{-}(\bar{x},-d)=\langle \bar{u},-d\rangle\}.
		\end{aligned}
	\end{equation*}
	Taking the polar operation and invoking Propositions \ref{prop-2}, \ref{prop-5}, it yields that
	\begin{equation*}
		\begin{aligned}
		\{d\, |\, r^{\downarrow}_{-}(\bar{x},d)= -r^{\downarrow}_{-}(\bar{x},-d)\}^{\circ}
		&=\{d\, |\, r^{\downarrow}_{-}(\bar{x},d)=\langle \bar{u},d\rangle\}^{\circ}+\{d\, |\, r^{\downarrow}_{-}(\bar{x},-d)=\langle \bar{u},-d\rangle\}^{\circ}\\
		&=\{d\, |\, r^{*\downarrow}_{-}(\bar{u},d)=\langle \bar{x},d\rangle\}\cap\{d\,|\, d^{2}r^{*}(\bar{u},\bar{x})(d)=0\}\qquad\\
		&\quad+\{d\, |\, r^{*\downarrow}_{-}(\bar{u},-d)=\langle \bar{x},-d\rangle\}\cap\{d\, |\, d^{2}r^{*}(\bar{u},\bar{x})(-d)=0\}\\
		&=\operatorname{aff}\{d\, |\, 0\in D\operatorname{Prox}_{r}(\bar{x}+\bar{u})(d)\}.
		\end{aligned}
	\end{equation*}
	Finally, combining this result with Assumption \ref{assumption-2}, Lemma \ref{lemma-Gamma-inequality} and Corollary \ref{corollary-2} completes the proof.
\end{proof}
\begin{remark}
	The second equality in Assumption \ref{assumption-2} is essential for establishing the result in Proposition \ref{prop-4}. Although its general validity for $\mathcal{C}^{2}$-cone reducible convex functions is still open, we establish a checkable sufficient condition in Appendix \ref{Appendix-A} and verify it for the indicator function of the positive semidefinite cone in Appendix \ref{Appendix-B}. 
\end{remark}

Now we are ready to introduce some equivalent formulations of SSOSC for problem $(P)$. Let $g$ be a proper l.s.c. convex function with $F(\bar{x})\in\operatorname{dom}g$ and $\bar{\mu}\in\partial g(F(\bar{x}))$. Assume that $g$ is parabolically epi-differentiable at $F(\bar{x})$ for every $w\in\{d\, |\, g^{\downarrow}_{-}(F(\bar{x}),d)=\langle \bar{\mu},d\rangle\}$, parabolically regular at $F(\bar{x})$ for $\bar{\mu}$, and satisfies Assumption \ref{assumption-2} at $F(\bar{x})$ for $\bar{\mu}$.
By Lemma \ref{lemma-relation-range-partial}, the second-order variational function $\Gamma_{g}(F(\bar{x}),\bar{\mu})$ admits the equivalent representation
\begin{equation*}
	\Gamma_{g}(F(\bar{x}),\bar{\mu})(v)=\left\{\begin{array}{ll}\min\limits_{{d\in\mathcal{R}^{m},v=Ud}\atop{U\in\partial \operatorname{Prox}_{g}(F(\bar{x})+\bar{\mu})}}\langle v,d-v\rangle,\ \mbox{if}\ v\in\operatorname{aff}\{d\, |\, g^{\downarrow}_{-}(F(\bar{x}),d)=\langle\bar{\mu},d\rangle\},\\
		\qquad\qquad\qquad+\infty,\qquad\quad\mbox{otherwise}.
	\end{array}\right.
\end{equation*}
Alternatively, it can be expressed as
\begin{equation}\label{eq:equiv-Gamma-g}
	\Gamma_{g}(F(\bar{x}),\bar{\mu})(v)=\left\{\begin{array}{cl}\min\limits_{{d\in\mathcal{R}^{m},v=Ud}\atop{U\in\partial \operatorname{Prox}_{g}(F(\bar{x})+\bar{\mu})}}\langle v,d-v\rangle,&\mbox{if}\ v\in\operatorname{rge}(D^{*}\operatorname{Prox}_{g}(F(\bar{x})+\bar{\mu})),\\
		+\infty,&\mbox{otherwise}.
	\end{array}\right.
\end{equation}

\section{The nonsingularity of Clarke's generalized Jacobian of  $R$}\label{sec:equiv-nonsignularity-SSOSC}
In this section, we explore equivalent characterizations for the nonsingularity of Clarke's generalized Jacobian of the mapping $R$ defined in \eqref{eq:kkt-generalized-equation} at a solution point.
Recall the fundamental definition of strong regularity.
\begin{definition}
	Let $\bar{x}$ be a solution to the generalized equation \eqref{eq:generalized-equation}. It is said that $\bar{x}$ is a strongly regular solution of the generalized equation \eqref{eq:generalized-equation} if there exist neighborhoods $\mathcal{U}$ of $\bar{x}$ and $\mathcal{V}$ of the origin such that for every $\delta\in\mathcal{V}$, the linearized generalized equation
	\begin{equation*}
	\delta\in\phi(\bar{x})+\phi'(\bar{x})(x-\bar{x})+\mathcal{M}(x)
	\end{equation*}
	admits a unique solution $\xi(\delta)$ in $\mathcal{U}$, and the mapping $\xi:\mathcal{V}\rightarrow\mathcal{U}$ is Lipschitz continuous.
\end{definition}	

Recall that a vector-valued function $\Phi:\mathcal{O}\subseteq\mathcal{R}^{t}\rightarrow\mathcal{R}^{t}$ is said to be a locally Lipschitz homeomorphism at $x\in\mathcal{O}$, if there exists an open neighborhood $\mathcal{N}\subseteq\mathcal{O}$ around $x$ such that the restricted mapping $\Phi|_{\mathcal{N}}$ is bijective, and both $\Phi|_{\mathcal{N}}$ and $(\Phi|_{\mathcal{N}})^{-1}$ are Lipschitz continuous on their respective domains.

Now we establish the connection between strong regularity of the KKT system \eqref{eq:comp-prob-kkt} and the locally Lipschitz homeomorphism property of the mapping $R$ around a solution point of the system  \eqref{eq:comp-prob-kkt}.

\begin{lemma}\label{lem:strong-regularity}
	Let $(\bar{x},\bar{\mu})$ be a solution to the KKT system \eqref{eq:comp-prob-kkt}. Strong regularity of the KKT system \eqref{eq:comp-prob-kkt} at $(\bar{x},\bar{\mu})$ is equivalent to that the mapping $R$ defined in \eqref{eq:kkt-generalized-equation} is a locally Lipschitz homeomorphism near $(\bar{x},\bar{\mu})$.
\end{lemma}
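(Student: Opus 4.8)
The plan is to recast the KKT system \eqref{eq:comp-prob-kkt} as a generalized equation of the form \eqref{eq:generalized-equation} and to identify $R$ as its natural residual map. Writing $z=(x,\mu)$, I would set
\[
\phi(z)=\begin{pmatrix}[F'(x)]^{*}\mu\\ -F(x)\end{pmatrix},\qquad \mathcal{M}(z)=\{0\}\times\partial g^{*}(\mu),
\]
so that $0\in\phi(z)+\mathcal{M}(z)$ reproduces $[F'(x)]^{*}\mu=0$ together with $F(x)\in\partial g^{*}(\mu)$, i.e. $\mu\in\partial g(F(x))$. Since $\mathcal{M}=\partial G$ for the convex function $G(x,\mu):=g^{*}(\mu)$, the operator $\mathcal{M}$ is maximal monotone with resolvent $(I+\mathcal{M})^{-1}(a,b)=(a,\operatorname{Prox}_{g^{*}}(b))$, and a direct computation of the natural residual $z-(I+\mathcal{M})^{-1}(z-\phi(z))$ reproduces exactly the mapping $R$ of \eqref{eq:kkt-generalized-equation}. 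In particular $R$ is single-valued and, since $F$ is smooth and $\operatorname{Prox}_{g^{*}}$ is globally nonexpansive, locally Lipschitz near $(\bar{x},\bar{\mu})$.

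Next I would route both sides of the equivalence through a single notion: strong metric regularity of $\Psi:=\phi+\mathcal{M}$ at $((\bar{x},\bar{\mu}),0)$. Because $\phi$ is $C^{1}$, the difference between $\phi$ and its linearization at $(\bar{x},\bar{\mu})$ is a $C^{1}$ perturbation whose derivative vanishes at the base point; since strong metric regularity is stable under such perturbations (Robinson's theorem, in the form given in \cite{DontchevandRockafellar2009}), the strong regularity of $(\bar{x},\bar{\mu})$---which is exactly strong metric regularity of the linearized mapping---is equivalent to strong metric regularity of $\Psi$. At the other end, $R$ is single-valued and locally Lipschitz, and for such mappings strong metric regularity at $((\bar{x},\bar{\mu}),0)$ is equivalent to $R$ being a locally Lipschitz homeomorphism near $(\bar{x},\bar{\mu})$; see \cite{DontchevandRockafellar2009}. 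It therefore remains to prove that $\Psi$ is strongly metrically regular if and only if $R$ is a locally Lipschitz homeomorphism.

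This last equivalence, between a set-valued and a single-valued regularity, is the crux and the step I expect to be hardest. The bridge is the resolvent identity
\[
z\in\Psi^{-1}(p)\iff z=(I+\mathcal{M})^{-1}\big(z-\phi(z)+p\big),
\]
which rewrites the perturbed inclusion as a perturbed fixed-point equation. Introducing the normal-map variable $u:=z-\phi(z)+p$, so that $z=(I+\mathcal{M})^{-1}(u)$, yields a bijection $\Psi^{-1}(p)\leftrightarrow N^{-1}(p)$ for the normal map $N(u):=\phi((I+\mathcal{M})^{-1}(u))+u-(I+\mathcal{M})^{-1}(u)$: a single-valued Lipschitz localization of $\Psi^{-1}$ produces one for $N^{-1}$ by composition with the Lipschitz maps $\phi$ and $(I+\mathcal{M})^{-1}$, while conversely the $1$-Lipschitz resolvent recovers the localization of $\Psi^{-1}$ from that of $N^{-1}$. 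Hence $\Psi$ is strongly metrically regular if and only if $N$ is a locally Lipschitz homeomorphism. The genuinely delicate point is then to transfer this property from the normal map $N$ to the natural map $R$ named in the lemma: a naive contraction argument fails because the second difference of $(I+\mathcal{M})^{-1}$ need not be small, so I would instead invoke the standard equivalence between the natural and normal residual maps, again leaning on the firm nonexpansiveness of the resolvent. Chaining the three steps yields the assertion: the KKT system is strongly regular at $(\bar{x},\bar{\mu})$ if and only if $R$ is a locally Lipschitz homeomorphism near $(\bar{x},\bar{\mu})$.
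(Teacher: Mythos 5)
Your overall architecture is genuinely different from the paper's: you reduce strong regularity to strong metric regularity of $\Psi=\phi+\mathcal{M}$ via Robinson's implicit function theorem, pass to the normal map $N$ through the resolvent change of variables, and then try to transfer the locally Lipschitz homeomorphism property from $N$ to the natural map $R$. The first two stages are sound (the identification of $R$ as the natural residual of $\phi+\mathcal{M}$ with $\mathcal{M}=\{0\}\times\partial g^{*}$ is correct, and the bijection $\Psi^{-1}(p)\leftrightarrow N^{-1}(p)$ via $u=z-\phi(z)+p$ is the standard normal-map argument). But the final stage is a genuine gap, and it is precisely the content of the lemma. The natural map and the normal map parametrize \emph{different} perturbations of the generalized equation: $N(u)=p$ corresponds to the canonical perturbation $p\in\phi(z)+\mathcal{M}(z)$, whereas $R(z)=q$ corresponds to $q-\phi(z)\in -z+(I+\mathcal{M})(z-q)$, a perturbation entering both the range and the argument of $\mathcal{M}$. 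Hence their level sets are not in any evident bijection, and the ``standard equivalence between the natural and normal residual maps'' that you invoke is not an off-the-shelf fact in the generality needed here: the classical versions (Robinson, Facchinei--Pang) are stated for projections onto convex sets in variational inequalities, not for the resolvent $(I+\{0\}\times\partial g^{*})^{-1}$ with the nonlinear map $F$ appearing \emph{inside} the proximal operator. You yourself flag that the naive contraction argument fails; what replaces it is exactly what must be proved, and the proposal does not supply it or cite a theorem that covers this setting.

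For comparison, the paper sidesteps the normal map entirely. It introduces the natural map $\widetilde R$ of the \emph{linearized} KKT system and shows by an explicit, elementary change of variables (absorbing the perturbation $\delta_{2}$ into $\mu$, so that $z(\delta)=\tilde z(\delta_{1}+F'(\bar x)^{*}\delta_{2},\delta_{2})-(0,\delta_{2})$) that strong regularity is equivalent to $\widetilde R$ being a locally Lipschitz homeomorphism; the residual difficulty --- passing from $\widetilde R$ to $R$ --- is then discharged by citing Kummer's inverse-function theorem (Theorem 3.1 of \cite{Kummer1991}), which applies because $R$ and $\widetilde R$ differ only through the linearization of the smooth data. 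If you want to salvage your route, the cleanest fix is to replace the vague appeal at the last step by exactly such a citation (Kummer's theorem applied to $R$ versus the natural map of the linearization), at which point your normal-map detour becomes unnecessary.
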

\begin{proof}
	Denote
	\begin{equation*}
		\widetilde{R}(x,\mu):=\left[\begin{array}{c}
			\langle F''(\bar{x})(x-\bar{x}),\bar{\mu}\rangle+F'(\bar{x})^{*}(\mu-\bar{\mu})\\
			\mu-\operatorname{Prox}_{g^{*}}(F(\bar{x})+F'(\bar{x})(x-\bar{x})+\mu)
		\end{array}\right].
	\end{equation*} 
	We first show  that strong regularity of the KKT system \eqref{eq:comp-prob-kkt} at $(\bar{x},\bar{\mu})$ is equivalent to  $\widetilde{R}$ being a locally Lipschitz homeomorphism near $(\bar{x},\bar{\mu})$.
	
	Suppose that $\widetilde{R}$ is a locally Lipschitz homeomorphism near $(\bar{x},\bar{\mu})$. There exists an open neighborhood $\mathcal{N}$ of $(\bar{x},\bar{\mu})$ such that the restriction  $\widetilde{R}_{\mathcal{N}}$ is bijective, $\widetilde{R}(\mathcal{N})$ is an open neighborhood of the origin, and the inverse mapping $(\widetilde{R}|_{\mathcal{N}})^{-1}:\widetilde{R}(\mathcal{N})\rightarrow\mathcal{N}$ is Lipschitz continuous. Consequently, for every $\tilde{\delta}\in\widetilde{R}(\mathcal{N})$, there exists a unique $\tilde{z}(\tilde{\delta})\in\mathcal{N}$ satisfying $\widetilde{R}(\tilde{z})=\tilde{\delta}$.
	Let $\delta=(\delta_{1},\delta_{2})\in\mathcal{R}^{n}\times\mathcal{R}^{m}$ and $z(\delta)$ be a solution of the following generalized equation
	\begin{equation}\label{eq:comp-prob-kkt-linearize}
		\delta\in\left[\begin{array}{c} \langle F''(\bar{x})(x-\bar{x}),\bar{\mu}\rangle+ F'(\bar{x})^{*}(\mu-\bar{\mu})\\-F(\bar{x})-F'(\bar{x})(x-\bar{x})\end{array}\right]+\left[\begin{array}{c}0\\\partial g^{*}(\mu)\end{array}\right].
	\end{equation}
	It is evident that the generalized equation   \eqref{eq:comp-prob-kkt-linearize} shares the same solution set as the following nonsmooth equation
	\begin{equation*}
		\left[\begin{array}{c}
			\langle F''(\bar{x})(x-\bar{x}),\bar{\mu}\rangle+ F'(\bar{x})^{*}((\mu+\delta_{2})-\bar{\mu})\\
			(\mu+\delta_{2})-\operatorname{Prox}_{g^{*}}(F(\bar{x})+F'(\bar{x})(x-\bar{x})+(\mu+\delta_{2}))
		\end{array}\right]=\left[\begin{array}{c}
		\delta_{1}+F'(\bar{x})^{*}\delta_{2}\\\delta_{2}
	\end{array}\right].
	\end{equation*} 
	This implies that for any $
	\delta\in\mathcal{U}:=\frac{1}{1+\|F'(\bar{x})\|}\widetilde{R}(\mathcal{N})$,
	the solution mapping $z$ of \eqref{eq:comp-prob-kkt-linearize} can be expressed as 
	\begin{equation*}
		z(\delta)=\tilde{z}(\delta_{1}+F'(\bar{x})^{*}\delta_{2},\delta_{2})-\left[\begin{array}{c}0\\\delta_{2}\end{array}\right]
	\end{equation*} 
	and it is also Lipschitz continuous on $\mathcal{U}$. Conversely, assuming the KKT system \eqref{eq:comp-prob-kkt} is strongly regular at $(\bar{x},\bar{\mu})$, there exist neighborhoods $\mathcal{U}$ of the origin and $\mathcal{N}$ of $(\bar{x},\bar{\mu})$ such that for any $\delta\in\mathcal{N}$, $z(\delta)\in\mathcal{N}$ is the unique solution to \eqref{eq:comp-prob-kkt-linearize}. For any 
	$\tilde{\delta}\in\mathcal{U}:=\frac{1}{1+\|F'(\bar{x})\|}\mathcal{N}$, 
	the nonsmooth equation $\widetilde{R}(z)=\tilde{\delta}$ admits a unique solution as 
	\begin{equation*}
	\tilde{z}(\tilde{\delta})=z(\tilde{\delta}_{1}-F'(\bar{x})^{*}\tilde{\delta}_{2},\tilde{\delta}_{2})+\left[\begin{array}{c}0\\\tilde{\delta}_{2}\end{array}\right], 
	\end{equation*} 
	which implies that $\tilde{z}(\cdot)$ is Lipschitz continuous on $\mathcal{U}$ and the mapping $\widetilde{R}$ is a locally Lipschitz homeomorphism near $(\bar{x},\bar{\mu})$. The desired result follows directly from \cite[Theorem~3.1]{Kummer1991} that the mapping $\widetilde{R}$ is a locally Lipschitz homeomorphism near $(\bar{x},\bar{\mu})$ if and only if the mapping $R$ possesses the same property and this completes the proof.
\end{proof}

To establish our primary result, we provide detailed elaboration on the domain of $\Gamma_{g}(F(\bar{x}),\bar{\mu})(F'(\bar{x})(\cdot))$ under SRCQ for problem $(P)$.

\begin{lemma}\label{prop-9}
	Let $(\bar{x},\bar{\mu})$ be a solution to the KKT system \eqref{eq:comp-prob-kkt}. Suppose that the function $g$ is a proper l.s.c. convex function, $g$ is parabolically epi-differentiable at F($\bar{x})$ for every $w\in\{d\, |\, g^{\downarrow}_{-}(F(\bar{x}),d)=\langle \bar{\mu},d\rangle\}$, parabolically regular at $F(\bar{x})$ for $\bar{\mu}$, and satisfies Assumption \ref{assumption-2} at $F(\bar{x})$ for $\bar{\mu}$. If SRCQ \eqref{eq:srcq} is valid, then we have
	\begin{equation*}
		\operatorname{dom}\Gamma_{g}(F(\bar{x}),\bar{\mu})(F'(\bar{x})(\cdot))=\{d\,|\, F'(\bar{x})d\in\operatorname{aff}\mathcal{N}_{\partial g(F(\bar{x}))}(\bar{\mu})\}=\operatorname{aff}\mathcal{C}(\bar{x}).
	\end{equation*}
\end{lemma}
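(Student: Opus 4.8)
The plan is to split the claimed chain of equalities into two parts, handled by different tools: the first equality follows directly from the description of $\operatorname{dom}\Gamma_g$ supplied by Lemma \ref{lemma-relation-range-partial}, whereas the second is a purely conic–linear identity and is the only place where the SRCQ is actually consumed.

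First I would unwind the definition of $\Gamma_g(F(\bar{x}),\bar{\mu})$. Since $\Gamma_g(F(\bar{x}),\bar{\mu})(v)=+\infty$ precisely when $v\notin\bigcup_{U\in\partial\operatorname{Prox}_g(F(\bar{x})+\bar{\mu})}\operatorname{rge}U$, and since on that union the value is finite (a minimum of $\langle v,d-v\rangle$ over a nonempty index set, bounded below by $0$ by Proposition \ref{prop-partial-proximal-mapping}), one has $\operatorname{dom}\Gamma_g(F(\bar{x}),\bar{\mu})=\bigcup_{U}\operatorname{rge}U$. Under the standing hypotheses (parabolic epi-differentiability, parabolic regularity, and Assumption \ref{assumption-2} at $F(\bar{x})$ for $\bar{\mu}$), Lemma \ref{lemma-relation-range-partial} identifies this union with $\operatorname{aff}\{d\mid g^{\downarrow}_{-}(F(\bar{x}),d)=\langle\bar{\mu},d\rangle\}$; and because $\bar{\mu}\in\partial g(F(\bar{x}))$, Proposition 2.126 of \cite{Bonnans2000} rewrites the inner set as $\mathcal{N}_{\partial g(F(\bar{x}))}(\bar{\mu})$. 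Taking the preimage under the linear map $F'(\bar{x})$ then gives $\operatorname{dom}\Gamma_g(F(\bar{x}),\bar{\mu})(F'(\bar{x})(\cdot))=\{d\mid F'(\bar{x})d\in\operatorname{aff}\mathcal{N}_{\partial g(F(\bar{x}))}(\bar{\mu})\}$, which is the first equality; no constraint qualification is needed up to this point.

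For the second equality I would abstract the situation: write $A:=F'(\bar{x})$ and $K:=\mathcal{N}_{\partial g(F(\bar{x}))}(\bar{\mu})$, a closed convex cone, and recall from \eqref{eq:critical-cone-problem(P)} that $\mathcal{C}(\bar{x})=A^{-1}(K)$, itself a convex cone, so that $\operatorname{aff}\mathcal{C}(\bar{x})=A^{-1}(K)-A^{-1}(K)$ and $\operatorname{aff}K=K-K$. The goal reduces to $A^{-1}(K-K)=A^{-1}(K)-A^{-1}(K)$. The inclusion ``$\supseteq$'' is immediate, since $d_1,d_2\in A^{-1}(K)$ forces $A(d_1-d_2)\in K-K$. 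For ``$\subseteq$'', take $d_0$ with $Ad_0=k_1-k_2$, $k_1,k_2\in K$; applying the SRCQ \eqref{eq:srcq} in the form $A\mathcal{R}^n-K=\mathcal{R}^m$ to the vector $k_2$ yields $w_0\in\mathcal{R}^n$ and $k_3\in K$ with $k_2=Aw_0-k_3$, hence $Aw_0=k_2+k_3\in K$ and $A(d_0+w_0)=k_1+k_3\in K$ by convex-cone additivity; therefore $d_0=(d_0+w_0)-w_0\in A^{-1}(K)-A^{-1}(K)=\operatorname{aff}\mathcal{C}(\bar{x})$. This establishes the second equality and completes the argument.

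The only genuinely non-routine step is this last inclusion, i.e.\ upgrading membership in $A^{-1}(\operatorname{aff}K)$ to an honest difference of two preimages lying in $K$. The crux is to apply SRCQ not to $d_0$ but to the single ``obstruction'' vector $k_2$, which is exactly what makes both $k_2+k_3$ and $k_1+k_3$ land in $K$. I expect everything else—the finiteness of $\Gamma_g$ on its stated domain and the identity $\operatorname{aff}(\cdot)=(\cdot)-(\cdot)$ for convex cones—to be routine once the hypotheses of Lemma \ref{lemma-relation-range-partial} are in force.
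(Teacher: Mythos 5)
Your proposal is correct and follows essentially the same route as the paper: the first equality is read off from Lemma \ref{lemma-relation-range-partial} (together with the identification $\{d\mid g^{\downarrow}_{-}(F(\bar{x}),d)=\langle\bar{\mu},d\rangle\}=\mathcal{N}_{\partial g(F(\bar{x}))}(\bar{\mu})$), and the second is proved by noting the inclusion $\operatorname{aff}\mathcal{C}(\bar{x})\subseteq\{d\mid F'(\bar{x})d\in\operatorname{aff}\mathcal{N}_{\partial g(F(\bar{x}))}(\bar{\mu})\}$ is trivial and then using the SRCQ to write any such $d$ as a difference of two elements of $\mathcal{C}(\bar{x})$. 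The only cosmetic difference is which vector the SRCQ is applied to (you use $k_2=\bar{d}_2$, the paper uses $-\bar{d}_1-d$); both yield the same cone-additivity argument.
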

\begin{proof}
	By Lemma \ref{lemma-relation-range-partial}, the first desired equality follows immediately.
	The inclusion  $\operatorname{aff}\mathcal{C}(\bar{x})\subseteq\{d\,|\, F'(\bar{x})d\in\operatorname{aff}\mathcal{N}_{\partial g(F(\bar{x}))}(\bar{\mu})\}$ holds naturally. We now establish the reverse inclusion
	\begin{equation}\label{eq:prop9-result}
		\{d\,|\, F'(\bar{x})d\in\operatorname{aff}\mathcal{N}_{\partial g(F(\bar{x}))}(\bar{\mu})\}\subseteq\operatorname{aff}\mathcal{C}(\bar{x}).
	\end{equation}
	Let $\bar{d}$ be an arbitrary element of $\{d\,|\, F'(\bar{x})d\in\operatorname{aff}\mathcal{N}_{\partial g(F(\bar{x}))}(\bar{\mu})\}$ such that $F'(\bar{x})\bar{d}=\bar{d}_{1}-\bar{d}_{2}$, where $\bar{d}_{1},\bar{d}_{2}\in\mathcal{N}_{\partial g(F(\bar{x}))}(\bar{\mu})$. The convexity of $\mathcal{N}_{\partial g(F(\bar{x}))}(\bar{\mu})$ confirms that  $\bar{d}_{1}+d\in\mathcal{N}_{\partial g(F(\bar{x}))}(\bar{\mu})$ holds for any $d\in\mathcal{N}_{\partial g(F(\bar{x}))}(\bar{\mu})$. By SRCQ \eqref{eq:srcq}, there exist $d_{1}\in\mathcal{R}^{n}$ and $d_{2}\in\mathcal{N}_{\partial g(F(\bar{x}))}(\bar{\mu})$ such that 
	$F'(\bar{x})(-d_{1})+d_{2}=-\bar{d}_{1}-d$. This implies $F'(\bar{x})d_{1}=\bar{d}_{1}+d+d_{2}\in\mathcal{N}_{\partial g(F(\bar{x}))}(\bar{\mu})$ 
	and consequently  $d_{1}\in\mathcal{C}(\bar{x})$ and $F'(\bar{x})(d_{1}-\bar{d})=d+d_{2}+\bar{d}_{2}\in\mathcal{N}_{\partial g(F(\bar{x}))}(\bar{\mu})$, which shows that $d_{1}-\bar{d}\in\mathcal{C}(\bar{x})$ and $\bar{d}=d_{1}-(d_{1}-\bar{d})\in\operatorname{aff}\mathcal{C}(\bar{x})$. The required inclusion \eqref{eq:prop9-result} follows and this completes the proof.
\end{proof}

Lemma \ref{prop-9} demonstrates that
under SRCQ \eqref{eq:srcq}, SSOSC of problem $(P)$ admits an equivalent formulation as
\begin{equation*}
	\langle \bar{\mu},F''(\bar{x})(d,d)\rangle +\Gamma_{g}(F(\bar{x}),\bar{\mu})(F'(\bar{x})d)>0,\quad\forall\, d\in\operatorname{aff}\mathcal{C}(\bar{x})\setminus\{0\}.
\end{equation*}

We now present the first main theorem that characterizes the relationships among the following three conditions: SSOSC combined with the constraint nondegeneracy condition, the nonsingularity of Clarke's generalized Jacobian of the KKT system, and strong regularity of the KKT point.
\begin{theorem}\label{thm:ssosc-strong-regularity}
	Let $g$ be a proper l.s.c. convex function and $(\bar{x},\bar{\mu})$ be a solution to the KKT system \eqref{eq:comp-prob-kkt}. Assume that $g$ is parabolically regular at $F(\bar{x})$ for $\bar{\mu}$, parabolically epi-differentiable at F($\bar{x})$ for every $w\in\{d\, |\, g^{\downarrow}_{-}(F(\bar{x}),d)=\langle \bar{\mu},d\rangle\}$, and satisfies Assumption \ref{assumption-2} at $F(\bar{x})$ for $\bar{\mu}$. Under SSOSC \eqref{eq:comp-prob-SSOSC} and the nondegeneracy condition \eqref{eq:comp-prob-nondegeneracy-cond}, all elements of $\partial R(\bar{x},\bar{\mu})$ are nonsingular. Moreover, if all elements of $\partial R(\bar{x},\bar{\mu})$ are nonsingular, then $(\bar{x},\bar{\mu})$ is a strongly regular solution of the KKT system \eqref{eq:comp-prob-kkt}.
\end{theorem}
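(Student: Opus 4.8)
The plan is to prove the two implications separately; essentially all the work sits in the forward implication, while the converse is a direct appeal to Clarke's inverse mapping theorem.

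For the forward implication I would first record the block structure of $\partial R(\bar{x},\bar{\mu})$. Since the first component $F'(x)^*\mu$ is $C^1$ and the inner map $(x,\mu)\mapsto F(x)+\mu$ of the second component is $C^1$ with Jacobian $[F'(x),\,I]$, the Clarke chain rule shows that every $W\in\partial R(\bar{x},\bar{\mu})$ has the form
\[
W=\begin{pmatrix} F''(\bar{x})\bar{\mu} & F'(\bar{x})^* \\ -VF'(\bar{x}) & I-V\end{pmatrix},\qquad V\in\partial\operatorname{Prox}_{g^*}(F(\bar{x})+\bar{\mu}),
\]
with a single $V$ in both lower entries, so it suffices to show each such $W$ is injective. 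Assuming $W(\Delta x,\Delta\mu)=0$, I would set $U:=I-V$, which by Moreau's identity belongs to $\partial\operatorname{Prox}_g(F(\bar{x})+\bar{\mu})$, and put $v:=F'(\bar{x})\Delta x$ and $d:=F'(\bar{x})\Delta x+\Delta\mu$. The lower block then reads $v=Ud=U(v+\Delta\mu)$, while pairing the upper block with $\Delta x$ yields $\langle\bar{\mu},F''(\bar{x})(\Delta x,\Delta x)\rangle=-\langle v,\Delta\mu\rangle$.

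Next I would feed $v=U(v+\Delta\mu)$ into Lemma \ref{lemma-Gamma-inequality} to obtain $\Gamma_g(F(\bar{x}),\bar{\mu})(v)\le\langle v,\Delta\mu\rangle$; combining this with the upper-block identity gives
\[
\langle\bar{\mu},F''(\bar{x})(\Delta x,\Delta x)\rangle+\Gamma_g(F(\bar{x}),\bar{\mu})(F'(\bar{x})\Delta x)\le 0.
\]
Because the nondegeneracy lineality subspace $\mathcal{S}:=\{d\,|\,g^\downarrow_-(F(\bar{x}),d)=-g^\downarrow_-(F(\bar{x}),-d)\}$ is contained in $\{d\,|\,g^\downarrow_-(F(\bar{x}),d)=\langle\bar{\mu},d\rangle\}$, the nondegeneracy condition \eqref{eq:comp-prob-nondegeneracy-cond} implies the SRCQ \eqref{eq:srcq}; hence $\Lambda(\bar{x})=\{\bar{\mu}\}$ and, by Lemma \ref{prop-9}, $\operatorname{dom}\Gamma_g(F(\bar{x}),\bar{\mu})(F'(\bar{x})(\cdot))=\operatorname{aff}\mathcal{C}(\bar{x})$. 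Finiteness of the left-hand side above then forces $\Delta x\in\operatorname{aff}\mathcal{C}(\bar{x})$, and the single-multiplier form of the SSOSC forces $\Delta x=0$. With $\Delta x=0$ I have $v=0$, so $U\Delta\mu=0$ and $F'(\bar{x})^*\Delta\mu=0$; that is, $\Delta\mu\in\bigcup_{U}\mathcal{N}(U)$ and $\Delta\mu\in\mathcal{N}(F'(\bar{x})^*)$. Using \eqref{eq:prop8-result-3}, Corollary \ref{corollary-2} and Proposition \ref{prop-4} (all applied to $g$ at $F(\bar{x})$ for $\bar{\mu}$) I would identify $\bigcup_{U}\mathcal{N}(U)$ with $\mathcal{S}^{\circ}=\mathcal{S}^{\perp}$. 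Polarizing $\operatorname{rge}F'(\bar{x})+\mathcal{S}=\mathcal{R}^m$ (the nondegeneracy condition) gives $\mathcal{N}(F'(\bar{x})^*)\cap\mathcal{S}^{\perp}=\{0\}$, whence $\Delta\mu=0$ and $W$ is nonsingular.

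For the converse, nonsingularity of every element of $\partial R(\bar{x},\bar{\mu})$ together with Clarke's inverse mapping theorem \cite{Clarke1976} shows that $R$ is a locally Lipschitz homeomorphism near $(\bar{x},\bar{\mu})$, and Lemma \ref{lem:strong-regularity} then translates this into strong regularity of the KKT system. The main obstacle is the bookkeeping in the forward part: converting the algebraic relation $v=U(v+\Delta\mu)$ into the SSOSC inequality, and, after $\Delta x=0$, correctly placing $\Delta\mu$ inside $\mathcal{S}^{\perp}$ by assembling \eqref{eq:prop8-result-3}, Corollary \ref{corollary-2} and Proposition \ref{prop-4} so that nondegeneracy can be invoked. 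Verifying that each $W\in\partial R(\bar{x},\bar{\mu})$ really carries the single-$V$ block form, rather than mixing distinct subgradients across the two lower entries, also deserves care.
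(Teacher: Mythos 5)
Your proposal is correct and follows essentially the same route as the paper: the same single-$V$ block decomposition of an element of $\partial R(\bar{x},\bar{\mu})$, the same use of Lemma \ref{lemma-Gamma-inequality} and Lemma \ref{prop-9} to convert the lower block into the SSOSC inequality forcing $\Delta x=0$, the same identification of $\bigcup_{U}\mathcal{N}(U)$ with the polar of the lineality set via \eqref{eq:prop8-result-3} and Proposition \ref{prop-4} so that nondegeneracy yields $\Delta\mu=0$, and the same appeal to Clarke's inverse function theorem plus Lemma \ref{lem:strong-regularity} for the converse. The only cosmetic difference is that you make explicit the implication nondegeneracy $\Rightarrow$ SRCQ needed to invoke Lemma \ref{prop-9}, which the paper leaves implicit.
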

\begin{proof}
	Let $E$ be an arbitrary element of $\partial R(\bar{x},\bar{\mu})$ and $(\Delta x,\Delta\mu)\in\mathcal{R}^{n}\times\mathcal{R}^{m}$ satisfy $E(\Delta x,\Delta\mu)=0$. There exists  $V\in\partial\operatorname{Prox}_{g^{*}}(F(\bar{x})+\bar{\mu})$ with $U=I-V\in\partial\operatorname{Prox}_{g}(F(\bar{x})+\bar{\mu})$ such that
	\begin{equation}\label{eq:prop9-1}
		\left[\begin{array}{c}\langle F''(\bar{x})\Delta x,\bar{\mu}\rangle+F'(\bar{x})^{*}\Delta\mu\\
			F'(\bar{x})\Delta x - U(F'(\bar{x})\Delta x+\Delta\mu)\end{array}\right]=0.
	\end{equation}
	Applying Lemma \ref{lemma-Gamma-inequality} and Lemma \ref{prop-9} to the second equality of \eqref{eq:prop9-1}, it follows that
	\begin{equation}\label{eq:prop9-2-1}
		\begin{aligned}
		&F'(\bar{x})\Delta x=U(F'(\bar{x})\Delta x+\Delta\mu)\in\operatorname{aff}\mathcal{N}_{\partial g(F(\bar{x}))}(\bar{\mu}),\\
		&\Gamma_{g}(F(\bar{x}),\bar{\mu})(F'(\bar{x})\Delta x)
		\leq\langle F'(\bar{x})\Delta x,\Delta\mu\rangle.
		\end{aligned}
	\end{equation}
    Since the nondegeneracy condition \eqref{eq:comp-prob-nondegeneracy-cond} implies SRCQ \eqref{eq:srcq} and $\Lambda(\bar{x})=\{\bar{\mu}\}$,  combining Lemma \ref{prop-9}, the first equality of \eqref{eq:prop9-1} and \eqref{eq:prop9-2-1} indicate that $\langle\bar{\mu},F''(\bar{x})(\Delta x,\Delta x)\rangle+\langle\Delta x,F'(\bar{x})^{*}\Delta
	\mu\rangle=0$ and $\Delta x\in\operatorname{aff}\mathcal{C}(\bar{x})$,	$\langle\bar{\mu},F''(\bar{x})(\Delta x,\Delta x)\rangle+\Gamma_{g}(F(\bar{x}),\bar{\mu})(F'(\bar{x})\Delta x)\leq0$. Given that SSOSC \eqref{eq:comp-prob-SSOSC} holds, it derives that $\Delta x=0$ and consequently  \eqref{eq:prop9-1} reduces to
	\begin{equation}\label{eq:prop9-3}
		F'(\bar{x})^{*}\Delta\mu=0,\; U\Delta\mu=0.
	\end{equation}	
	By invoking Lemmas \ref{lemma-Gamma-inequality}, \ref{lemma-relation-range-partial} and Proposition \ref{prop-4}, the results in \eqref{eq:prop9-3} imply that $\Delta \mu\in\operatorname{aff}\{d\, |\, g^{*\downarrow}_{-}(\bar{\mu};d)=\langle F(\bar{x}),d\rangle\}$, $\Gamma_{g^{*}}(\bar{\mu},F(\bar{x}))(\Delta\mu)=0$
	and in addition
	\begin{equation*}
		\Delta\mu\in\{d\, |\, g^{\downarrow}_{-}(F(\bar{x}),d)=-g^{\downarrow}_{-}(F(\bar{x}),-d)\}^{\circ}\cap\operatorname{ker}F'(\bar{x})^{*}.
	\end{equation*}
	By virtue of the nondegeneracy condition specified in  \eqref{eq:comp-prob-nondegeneracy-cond}, it obtains that $\Delta\mu=0$. With this,
	the first segment of the intended result is established.
	
	Furthermore, suppose every element of $\partial R(\bar{x},\bar{\mu})$ is nonsingular, in light of Clarke's inverse function theorem \cite[Theorem~1]{Clarke1976}, $R$ is a locally Lipschitz homeomorphism near $(\bar{x},\bar{\mu})$. As a direct consequence of Lemma \ref{lem:strong-regularity}, the point $(\bar{x},\bar{\mu})$ is a strongly regular solution of the KKT system \eqref{eq:comp-prob-kkt}. With this, the proof is hereby concluded.
\end{proof}
Prior to further development, we establish some auxiliary lemmas that provide deeper insight into SSOSC.
\begin{lemma}\label{lem-equivalent-SSOSC}
	Let $(\bar{x},\bar{\mu})$ be a solution to the KKT system \eqref{eq:comp-prob-kkt}. Suppose that the function $g$ satisfies Assumption \ref{assumption-3} at $F(\bar{x})$ for $\bar{\mu}$. For any $v\in\operatorname{rge}(D^{*}\operatorname{Prox}_g(F(\bar{x})+\bar{\mu}))$, we have
	\begin{equation*}
		\Gamma_g(F(\bar{x}),\bar{\mu})(v)=\min_{d\in D^{*}(\partial g)(F(\bar{x}),\bar{\mu})(v)}\langle v,d\rangle.
	\end{equation*}
\end{lemma}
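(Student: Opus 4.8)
The plan is to prove the identity by establishing the two inequalities separately, using as a bridge the coderivative correspondence recorded in the proof of Proposition~\ref{prop-partial-proximal-mapping}: for a lower semicontinuous proper convex $g$ one has $v\in D^{*}\operatorname{Prox}_{g}(F(\bar{x})+\bar{\mu})(d)\Longleftrightarrow v-d\in D^{*}(\partial g)(F(\bar{x}),\bar{\mu})(-v)$. Replacing $v$ by $-v$ and renaming, this is equivalent to the dual form $d\in D^{*}(\partial g)(F(\bar{x}),\bar{\mu})(v)\Longleftrightarrow -v\in D^{*}\operatorname{Prox}_{g}(F(\bar{x})+\bar{\mu})(-v-d)$, which connects the feasible set of the program defining $\Gamma_{g}$ with the set $D^{*}(\partial g)(F(\bar{x}),\bar{\mu})(v)$. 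Throughout I write $P:=\operatorname{Prox}_{g}$ and $\xi:=F(\bar{x})+\bar{\mu}$, so that $P(\xi)=F(\bar{x})$.

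For the inequality $\Gamma_{g}(F(\bar{x}),\bar{\mu})(v)\le\min_{d}\langle v,d\rangle$, I would take any $d\in D^{*}(\partial g)(F(\bar{x}),\bar{\mu})(v)$ and pass it through the dual form to obtain $-v\in D^{*}P(\xi)(-v-d)$. Because $P$ is globally Lipschitz and its generalized Jacobian consists of symmetric positive semidefinite matrices (Proposition~\ref{prop-partial-proximal-mapping} together with the identity $D^{*}P(\xi)(w)=\partial\langle w,P\rangle(\xi)$), one has the containment $D^{*}P(\xi)(w)\subseteq\{Uw\mid U\in\partial P(\xi)\}$, so there exists $U\in\partial P(\xi)$ with $-v=U(-v-d)$, i.e.\ $v=U(v+d)$. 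Inequality~\eqref{eq:corollary-result-2} of Lemma~\ref{lemma-Gamma-inequality} then gives $\Gamma_{g}(F(\bar{x}),\bar{\mu})(v)\le\langle v,d\rangle$, and minimizing over admissible $d$ yields this direction. Notably this half needs neither Assumption~\ref{assumption-3} nor parabolic regularity.

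For the reverse inequality I would call on Assumption~\ref{assumption-3}, which supplies a minimizer $d^{*}$ of the program defining $\Gamma_{g}(F(\bar{x}),\bar{\mu})(v)$ that also lies in $\{d\mid v\in D^{*}P(\xi)(d)\}$; thus $v=U_{*}d^{*}$ for some $U_{*}\in\partial P(\xi)$, $\Gamma_{g}(F(\bar{x}),\bar{\mu})(v)=\langle v,d^{*}-v\rangle$, and $v\in D^{*}P(\xi)(d^{*})$. I would then propose $\hat{d}:=d^{*}-v$ as the minimizer on the right-hand side: by the dual form, the statement $\hat{d}\in D^{*}(\partial g)(F(\bar{x}),\bar{\mu})(v)$ is equivalent to $-v\in D^{*}P(\xi)(-d^{*})$, and once this is in hand the attained value is $\langle v,\hat{d}\rangle=\langle v,d^{*}-v\rangle=\Gamma_{g}(F(\bar{x}),\bar{\mu})(v)$, so $\min_{d}\langle v,d\rangle\le\Gamma_{g}(F(\bar{x}),\bar{\mu})(v)$ and equality follows.

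The hard part is the membership $-v\in D^{*}P(\xi)(-d^{*})$. Since the Mordukhovich coderivative is only positively homogeneous and need not be symmetric under negation at points where $P$ is nonsmooth, $v\in D^{*}P(\xi)(d^{*})$ does not by itself force $-v\in D^{*}P(\xi)(-d^{*})$. The way I would resolve this is to exploit that Assumption~\ref{assumption-3} places $d^{*}$ in the coderivative domain, where the realizing value $v$ can be taken to come from a B-subdifferential element, i.e.\ $v=U_{*}d^{*}$ with $U_{*}\in\partial_{B}P(\xi)$; such $U_{*}$ are symmetric, so $-v=U_{*}(-d^{*})$, and Proposition~2.4 of \cite{Gfrerer2022}---already invoked in Proposition~\ref{prop-partial-proximal-mapping}---guarantees $U_{*}(-d^{*})\in D^{*}P(\xi)(-d^{*})$, which is exactly what is needed. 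Verifying that the minimizer produced by Assumption~\ref{assumption-3} admits a representation through $\partial_{B}P(\xi)$ rather than only through the convexified $\partial P(\xi)$ is the delicate step, and it is precisely here that the symmetric positive semidefinite structure of the proximal Jacobian established in Proposition~\ref{prop-partial-proximal-mapping} is indispensable.
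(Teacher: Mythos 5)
Your first inequality, $\Gamma_{g}(F(\bar{x}),\bar{\mu})(v)\le\min_{d\in D^{*}(\partial g)(F(\bar{x}),\bar{\mu})(v)}\langle v,d\rangle$, is correct and is essentially the paper's own argument: the coderivative correspondence from Proposition \ref{prop-partial-proximal-mapping}, the containment $D^{*}\operatorname{Prox}_{g}(\xi)(w)\subseteq\{Uw\mid U\in\partial\operatorname{Prox}_{g}(\xi)\}$, and inequality \eqref{eq:corollary-result-2}. The gap is in the reverse inequality. You apply Assumption \ref{assumption-3} at $v$, obtain a minimizer $d^{*}$ with $v\in D^{*}\operatorname{Prox}_{g}(\xi)(d^{*})$, and then need $-v\in D^{*}\operatorname{Prox}_{g}(\xi)(-d^{*})$ to certify $\hat{d}=d^{*}-v\in D^{*}(\partial g)(F(\bar{x}),\bar{\mu})(v)$. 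As you note, this does not follow from positive homogeneity: already for $g=|\cdot|$ on $\mathcal{R}$ with $\bar{x}=0$, $\bar{\mu}=1$, $\xi=1$, one checks $1\in D^{*}\operatorname{Prox}_{g}(1)(2)$ while $-1\notin D^{*}\operatorname{Prox}_{g}(1)(-2)$, so the implication you need fails in general. Your proposed repair---that the minimizer supplied by Assumption \ref{assumption-3} can be realized as $v=U_{*}d^{*}$ with $U_{*}\in\partial_{B}\operatorname{Prox}_{g}(\xi)$---is asserted, not proved, and it is not a consequence of the assumption: Assumption \ref{assumption-3} only places $d^{*}$ in $\{d\mid v\in D^{*}\operatorname{Prox}_{g}(\xi)(d)\}$, and Proposition 2.4 of \cite{Gfrerer2022} gives only the inclusion $\{Ud\mid U\in\partial_{B}\operatorname{Prox}_{g}(\xi)\}\subseteq D^{*}\operatorname{Prox}_{g}(\xi)(d)$, which in general is strict. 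So the step you yourself flag as delicate is a genuine hole.

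The paper closes the argument by a small change of variable that makes the issue disappear, and the same device rescues your proof: since the program defining $\Gamma_{g}(F(\bar{x}),\bar{\mu})$ is invariant under $(U,d)\mapsto(U,-d)$, one has $\Gamma_{g}(F(\bar{x}),\bar{\mu})(v)=\Gamma_{g}(F(\bar{x}),\bar{\mu})(-v)$, and Assumption \ref{assumption-3} is invoked at $-v$ rather than at $v$. It then yields a minimizer $d^{*}$ of the program for $\Gamma_{g}(F(\bar{x}),\bar{\mu})(-v)$ satisfying $-v\in D^{*}\operatorname{Prox}_{g}(\xi)(d^{*})$, and your own coderivative correspondence, written in the form $\{d\mid -v\in D^{*}\operatorname{Prox}_{g}(\xi)(d)\}=-v-D^{*}(\partial g)(F(\bar{x}),\bar{\mu})(v)$, converts this membership directly into $d':=-v-d^{*}\in D^{*}(\partial g)(F(\bar{x}),\bar{\mu})(v)$ with $\langle v,d'\rangle=\langle -v,d^{*}+v\rangle=\Gamma_{g}(F(\bar{x}),\bar{\mu})(v)$. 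No negation symmetry of the limiting coderivative and no B-subdifferential representability of the minimizer are required.
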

\begin{proof}
	By \cite[Theorem~13.52]{Rockafellar1998},  it is easy to see that
	\begin{equation*}
		\Gamma_{g}(F(\bar{x}),\bar{\mu})(-v)\leq\min_{{d\in\mathcal{R}^{m}}\atop{-v\in D^{*}\operatorname{Prox}_{g}(F(\bar{x})+\bar{\mu})(d)}}\langle -v,d+v\rangle=\min_{d\in D^{*}(\partial g)(F(\bar{x}),\bar{\mu})(v)}\langle v,d\rangle.
	\end{equation*}
	In order to prove the reverse inequality, combining Assumption \ref{assumption-3} with \eqref{eq:inverse-of-DS&DS*}, we have
	\begin{equation*}
		\mathop{\operatorname{argmin}}_{{d\in\mathcal{R}^{m},-v=Ud}\atop{U\in\partial \operatorname{Prox}_{g}(F(\bar{x})+\bar{\mu})}}\langle -v,d+v\rangle\cap (-v-D^{*}(\partial g)(F(\bar{x}),\bar{\mu})(v))\neq\emptyset
	\end{equation*}
	and there exists $d_{v}=-v-\bar{d}$ with $\bar{d}\in D^{*}(\partial g)(F(\bar{x}),\bar{\mu})(v)$ such that $\Gamma_{g}(F(\bar{x}),\bar{\mu})(v)=\Gamma_{g}(F(\bar{x}),\bar{\mu})(-v)=\langle -v,d_{v}+v\rangle=\langle v,\bar{d}\rangle$. This verifies
	\begin{equation*}
		\Gamma_{g}(F(\bar{x}),\bar{\mu})(v)=\Gamma_{g}(F(\bar{x}),\bar{\mu})(-v)\geq\min_{d\in D^{*}(\partial g)(F(\bar{x}),\bar{\mu})(v)}\langle v,d\rangle
	\end{equation*}
	and thus the desired result follows.
\end{proof}

\begin{remark}
Assumption \ref{assumption-3} plays a crucial role in Lemma \ref{lem-equivalent-SSOSC}. We establish in the appendix a computationally tractable sufficient condition for practical applications. As a concrete demonstration, we also show the validity of this condition for the indicator function of the positive semidefinite cone. 
\end{remark}

\begin{lemma}\label{lemma-equivalent-SSOSC}
	Let $(\bar{x},\bar{\mu})$ be a solution to the KKT system \eqref{eq:comp-prob-kkt}. Suppose that $g$ is a proper l.s.c.  convex $\mathcal{C}^{2}$-cone reducible function and satisfies Assumption \ref{assumption-2} at $F(\bar{x})$ for $\bar{\mu}$. Given any $\bar{v}\in\operatorname{aff}\{d\,|\,g^{\downarrow}_{-}(F(\bar{x}),d)=\langle\bar{\mu},d\rangle\}$ and $\bar{\beta}\in\mathcal{R}$ satisfying $(\bar{v},\bar{\beta})\in\operatorname{dom}(D^{*}\mathcal{N}_{\operatorname{epi}g}((F(\bar{x}),g(F(\bar{x}))),(\bar{\mu},-1)))$, we have
	\begin{equation*}
		\min_{\bar{v}^{*}\in D^{*}(\partial g)(F(\bar{x}),\bar{\mu})(\bar{v})}\langle \bar{v}^{*},\bar{v}\rangle\geq\min_{(\bar{z},\bar{\gamma})\in D^{*}\mathcal{N}_{\operatorname{epi}g}((F(\bar{x}),g(F(\bar{x}))),(\bar{\mu},-1))(\bar{v},\bar{\beta})}\langle(\bar{v},\bar{\beta}), (\bar{z},\bar{\gamma})\rangle.
	\end{equation*}
\end{lemma}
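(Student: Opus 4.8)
\emph{The plan.} Write $\bar{y}:=F(\bar{x})$ and $\bar{c}:=g(\bar{y})$. The strategy is to reduce the minimum inequality to a single membership: I will show that $\bar{\beta}=\langle\bar{v},\bar{\mu}\rangle$ and that, for every $\bar{v}^{*}\in D^{*}(\partial g)(\bar{y},\bar{\mu})(\bar{v})$, one has $(\bar{v}^{*},0)\in D^{*}\mathcal{N}_{\operatorname{epi}g}((\bar{y},\bar{c}),(\bar{\mu},-1))(\bar{v},\bar{\beta})$. Granting this, the vanishing second slot of $(\bar{v}^{*},0)$ gives $\langle(\bar{v},\bar{\beta}),(\bar{v}^{*},0)\rangle=\langle\bar{v}^{*},\bar{v}\rangle$, so the right-hand minimum is at most $\langle\bar{v}^{*},\bar{v}\rangle$ for each admissible $\bar{v}^{*}$, and taking the minimum over $\bar{v}^{*}$ yields the assertion. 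Note that the first identity in Assumption \ref{assumption-2} identifies $\operatorname{dom}D^{*}(\partial g)(\bar{y},\bar{\mu})$ with $\operatorname{aff}\{d\,|\,g^{\downarrow}_{-}(\bar{y},d)=\langle\bar{\mu},d\rangle\}$, so the hypothesis on $\bar{v}$ guarantees that the left-hand set is nonempty and the minimum finite.

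\emph{Reduction to the graphs.} Using the inverse relation $v\in D^{*}S(z,u)(y)\Leftrightarrow(v,-y)\in\mathcal{N}_{\operatorname{gph}S}(z,u)$, both memberships become normal-cone statements on $N:=\operatorname{gph}\partial g$ and $M:=\operatorname{gph}\mathcal{N}_{\operatorname{epi}g}$. The bridge is the map $q(x,\nu):=((x,g(x)),(\nu,-1))$, which sends $N$ into $M$ because $\nu\in\partial g(x)\Leftrightarrow(\nu,-1)\in\mathcal{N}_{\operatorname{epi}g}(x,g(x))$. First I would compute the tangent cone to $M$ at $q(x,\nu)$. Every point of $M$ close to $q(x,\nu)$ has last coordinate near $-1<0$, which forces the epigraphical constraint active, $c=g(\cdot)$, and forces $\mu=-\gamma\nu'$ with $\nu'\in\partial g(\cdot)$; the subgradient squeeze $\langle\nu,x'-x\rangle\le g(x')-g(x)\le\langle\nu',x'-x\rangle$ then pins the epigraphical direction to $dc=\langle\nu,dx\rangle$, while conversely any contingent direction $(dx,d\nu)\in\mathcal{T}_{N}(x,\nu)$ lifts by choosing $\gamma_{t}=-1+td\gamma$ and $c_{t}=g(x_{t})$. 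This gives
\[
\mathcal{T}_{M}(q(x,\nu))=\Big\{\big((dx,\langle\nu,dx\rangle),(d\nu-d\gamma\,\nu,d\gamma)\big)\,\Big|\,(dx,d\nu)\in\mathcal{T}_{N}(x,\nu),\ d\gamma\in\mathcal{R}\Big\}.
\]

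\emph{Normal cone, coderivative, and the lifting.} Polarizing (recall $\widehat{\mathcal{N}}_{C}=\mathcal{T}^{\circ}_{C}$) and letting $q(x_{k},\nu_{k})\to((\bar{y},\bar{c}),(\bar{\mu},-1))$, I would obtain
\[
\mathcal{N}_{M}((\bar{y},\bar{c}),(\bar{\mu},-1))=\Big\{\big((z,\eta),(w,\langle w,\bar{\mu}\rangle)\big)\,\Big|\,(z+\eta\bar{\mu},w)\in\mathcal{N}_{N}(\bar{y},\bar{\mu})\Big\}.
\]
The inclusion ``$\subseteq$'' follows because every regular normal to $M$ near the base point carries the constraint $\beta'=\langle w,\nu\rangle$ (read off the polar of the analogous tangent-cone description), whence the limit gives $\beta'=\langle w,\bar{\mu}\rangle$; the inclusion ``$\supseteq$'' is the lifting: given $(a,b)\in\mathcal{N}_{N}(\bar{y},\bar{\mu})$, approximate it by $(a_{k},b_{k})\in\widehat{\mathcal{N}}_{N}(x_{k},\nu_{k})$, check by direct pairing against the tangent cone that $((a_{k},0),(b_{k},\langle b_{k},\nu_{k}\rangle))\in\widehat{\mathcal{N}}_{M}(q(x_{k},\nu_{k}))$ (the zero slot annihilates the $\langle\nu_{k},dx\rangle$-term and the two $d\gamma$-terms cancel, leaving $\langle a_{k},dx\rangle+\langle b_{k},d\nu\rangle\le0$), and pass to the limit using $g(x_{k})\to g(\bar{y})$, itself a consequence of the squeeze $g(\bar{y})+\langle\bar{\mu},x_{k}-\bar{y}\rangle\le g(x_{k})\le g(\bar{y})+\langle\nu_{k},x_{k}-\bar{y}\rangle$. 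Translating back, this reads: $(\bar{z},\bar{\gamma})\in D^{*}\mathcal{N}_{\operatorname{epi}g}(\ldots)(\bar{v},\bar{\beta})$ iff $\bar{\beta}=\langle\bar{v},\bar{\mu}\rangle$ and $\bar{z}+\bar{\gamma}\bar{\mu}\in D^{*}(\partial g)(\bar{y},\bar{\mu})(\bar{v})$. In particular $\bar{\beta}=\langle\bar{v},\bar{\mu}\rangle$ is forced by the domain hypothesis, and taking $\bar{\gamma}=0$ shows $(\bar{v}^{*},0)\in D^{*}\mathcal{N}_{\operatorname{epi}g}(\ldots)(\bar{v},\bar{\beta})$, closing the reduction. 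Since the pairing is constantly $\langle\bar{v}^{*},\bar{v}\rangle$ along this correspondence, the two minima in fact coincide, so the stated inequality holds a fortiori.

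\emph{Main obstacle.} The delicate step is the tangent-cone identity for $M$, because $M$ is described through the nonsmooth relation $c=g(x)$ and one cannot differentiate $(x,\nu,\gamma)\mapsto((x,g(x)),(-\gamma\nu,\gamma))$ directly. The resolution is that along $N=\operatorname{gph}\partial g$ the convexity squeeze pins the epigraphical direction to $dc=\langle\nu,dx\rangle$ exactly; this is the only first-order datum required, and the remaining structure is a linear reparametrization together with a closed-graph limiting argument. It is worth stressing that the argument rests on convexity and the closedness of $\operatorname{gph}\partial g$, whereas $C^{2}$-cone reducibility and Assumption \ref{assumption-2} enter only to make $\operatorname{dom}D^{*}(\partial g)(\bar{y},\bar{\mu})$ equal to the affine hull appearing in the hypothesis, ensuring the left-hand minimum is genuinely finite.
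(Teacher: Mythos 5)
Your proposal is correct, and it takes a genuinely different route from the paper. The paper's proof never computes $\mathcal{N}_{\operatorname{gph}\mathcal{N}_{\operatorname{epi}g}}$ exactly: it uses the $C^{2}$-cone reduction and Corollary 5.4 of \cite{Helmut2019} to exhibit an explicit subset $\mathcal{G}$ of that normal cone built from the Hessian of the reduction map $h$, bounds the right-hand minimum by $\liminf d^{2}g(x,\mu)(v)$ via parabolic regularity and the second-order epiderivative, and then bounds that $\liminf$ by the left-hand minimum using Proposition \ref{prop-5} and Proposition 8.37 of \cite{Rockafellar1998}; the second subderivative $d^{2}g$ is the pivot of the whole chain, and the argument leans on Assumption \ref{assumption-2} to guarantee the domain statement. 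Your argument instead establishes the exact first-order correspondence $\mathcal{N}_{\operatorname{gph}\mathcal{N}_{\operatorname{epi}g}}((\bar{y},\bar{c}),(\bar{\mu},-1))=\{((z,\eta),(w,\langle w,\bar{\mu}\rangle))\,|\,(z+\eta\bar{\mu},w)\in\mathcal{N}_{\operatorname{gph}\partial g}(\bar{y},\bar{\mu})\}$, using only convexity (the normal vectors to $\operatorname{epi}g$ with negative last component are exactly $\lambda(\nu,-1)$ with $\nu\in\partial g$, the subgradient squeeze pins $dc=\langle\nu,dx\rangle$, and the free ray direction $(\nu,-1)$ forces $\beta'=\langle w,\nu\rangle$); I checked the tangent-cone identity, its polar, and both limiting inclusions, and they hold. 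This buys you more than the lemma asks for: the two minima coincide (your correspondence is value-preserving in both directions via $\bar{v}^{*}=\bar{z}+\bar{\gamma}\bar{\mu}$ and the lift $\bar{\gamma}=0$), and neither $C^{2}$-cone reducibility nor Assumption \ref{assumption-2} is needed for the inequality itself --- they only serve to make the left-hand feasible set nonempty, as you observe. What the paper's longer route buys in exchange is the explicit link between the coderivative minimum and $\liminf d^{2}g$, which is what Remark \ref{remark-equivalent-SSOSC} exploits to connect with the second-order subdifferential condition of \cite{MordukhovichNghia2015}. The only caveat is presentational: your tangent-cone identity and the two limiting passages are sketched ("I would compute") rather than written out, so a full write-up should include the verification that every point of $\operatorname{gph}\mathcal{N}_{\operatorname{epi}g}$ near the base point has the form $((x,g(x)),(\lambda\nu,-\lambda))$ with $\nu\in\partial g(x)$, $\lambda>0$, and carry the scaling factor $\lambda$ through the regular-normal computation before passing to the limit.
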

\begin{proof}
	We first show that for any $\bar{v}\in\operatorname{aff}\{d\,|\,g^{\downarrow}_{-}(F(\bar{x}),d)=\langle\bar{\mu},d\rangle\}$, there exists $\bar{\beta}\in\mathcal{R}$ such that $(\bar{v},\bar{\beta})\in\operatorname{dom}(D^{*}\mathcal{N}_{\operatorname{epi}g}((F(\bar{x}),g(F(\bar{x}))),(\bar{\mu},-1)))$. Given $\lambda>0$, $x\in\operatorname{dom}g$, $\alpha:=g(x)$ and $\mu\in\partial g(x)$, Proposition \ref{prop-domd2g},  \cite[Theorem~8.2]{Rockafellar1998} and  \cite[Proposition~2.126]{Bonnans2000} imply that the domain of $D\mathcal{N}_{\operatorname{epi}g}((x,\alpha),\lambda(\mu,-1))$ can be  expressed as
	\begin{equation}\label{eq:dom-graphical-normal-epi-g}
		\begin{aligned}
		\operatorname{dom}(D\mathcal{N}_{\operatorname{epi}g}(x,\alpha),\lambda(\mu,-1)))&=\operatorname{dom}(D(\partial\delta_{\operatorname{epi}g})((x,\alpha),\lambda(\mu,-1)))\\
		&=\{(y,c)\,|\,(\delta_{\operatorname{epi}g})^{\downarrow}_{-}((x,\alpha),(y,c))=\lambda\langle\mu,y\rangle-\lambda c\}\\
		&=\{(y,c)\,|\,g^{\downarrow}_{-}(x,y)=\langle \mu,y\rangle=c\}\\
		&=\{(y,c)\,|\,y\in\operatorname{dom}D(\partial g)(x,\mu),\,c=\langle\mu,y\rangle\}\\
		&:=\mathcal{K}((x,\alpha),\lambda(\mu,-1)).
		\end{aligned}
	\end{equation}	
	According to \eqref{eq:inverse-of-DS&DS*} and \cite[Exercise~10.17]{Rockafellar1998}, the set $-\operatorname{dom}(\widehat{D}^{*}\mathcal{N}_{\operatorname{epi}g}((x,\alpha),\lambda(\mu,-1)))$ satisfies the following relationship
	\begin{align}\label{eq:dom-regular-coderivative-dom-graphical-derivative-epi}
		-\operatorname{dom}(\widehat{D}^{*}\mathcal{N}_{\operatorname{epi}g}((x,\alpha),\lambda(\mu,-1)))&=\operatorname{rge}\widehat{D}^{*}\operatorname{Prox}_{\delta_{\operatorname{epi}g}}(x+\lambda\mu,\alpha-\lambda)\nonumber\\
		&\subseteq\mathcal{N}_{(I+\partial\delta_{\operatorname{epi}g})(x,\alpha)}(x+\lambda\mu,\alpha-\lambda)=\mathcal{N}_{\partial\delta_{\operatorname{epi}g}(x,\alpha)}(\lambda\mu,-\lambda)\nonumber\\
		&=\{(y,c)\,|\,(\delta_{\operatorname{epi}g})^{\downarrow}_{-}((x,\alpha),(y,c))=\lambda\langle\mu,y\rangle-\lambda c\}\nonumber\\
		&=\mathcal{K}((x,\alpha),\lambda(\mu,-1)).
	\end{align}
	Since RCQ holds for the constraint $(x,\gamma)\in\operatorname{epi}g$, the set-valued mapping $M(x,\gamma):=(x,\gamma)-\operatorname{epi}g$ is metrically subregular at $(x,\alpha)$ for $(0,0)$. Moreover,  $\operatorname{epi}g$ is $\mathcal{C}^{2}$-cone reducible at $(x,\alpha)$ with $h$ being the corresponding twice continuously differentiable mapping. By  \cite[Corollary~5.4]{Helmut2019}, for any $(y,c)\in\mathcal{K}((x,\alpha),\lambda(\mu,-1))$, 
	\begin{equation*}
		\begin{aligned}
		D\mathcal{N}_{\operatorname{epi}g}((x,\alpha),\lambda(\mu,-1))(y,c)=\lambda\nabla^{2}\langle( h'(x,\alpha)^{*})^{-1}(\mu,-1),h(\cdot,\cdot)\rangle(x,\alpha)(y,c)&\\
		+\mathcal{N}_{\mathcal{K}((x,\alpha),\lambda(\mu,-1))}(y,c)&.
		\end{aligned}
	\end{equation*}
	Consequently, by \eqref{eq:dom-regular-coderivative-dom-graphical-derivative-epi}, given $(v,\beta)\in\operatorname{dom}(\widehat{D}^{*}\mathcal{N}_{\operatorname{epi}g}((x,\alpha),\lambda(\mu,-1)))\subseteq-\mathcal{K}((x,\alpha),\lambda(\mu,-1))$ and $(v^{*},\beta^{*})\in\lambda\nabla^{2}\langle(h'(x,\alpha)^{*})^{-1}(\mu,-1),h(\cdot,\cdot)\rangle(x,\alpha)(v,\beta)
	+\mathcal{K}((x,\alpha),\lambda(\mu,-1))^{\circ}$, we have
	\begin{equation*}
		\langle((v^{*},\beta^{*}),-(v,\beta)),((y,c),(y^{*},c^{*}))\rangle\leq 0,\;\forall\;(y^{*},c^{*})\in D\mathcal{N}_{\operatorname{epi}g}((x,\alpha),\lambda(\mu,-1))(y,c).
	\end{equation*}
	Therefore 
	\begin{equation*}
		\begin{aligned}
		\lambda\nabla^{2}\langle(h'(x,\alpha)^{*})^{-1}(\mu,-1),h(\cdot,\cdot)\rangle(x,\alpha)(v,\beta)
		+\mathcal{K}((x,\alpha),\lambda(\mu,-1))^{\circ}\\
		\subseteq \widehat{D}^{*}\mathcal{N}_{\operatorname{epi}g}((x,\alpha),\lambda(\mu,-1))(v,\beta).
		\end{aligned}
	\end{equation*}
    Taking into account the definition of limiting/Mordukhovich coderivative together with the outer semicontinuity of limiting/Mordukhovich normal cone mapping (see e.g.,  \cite[Proposition~6.5]{Rockafellar1998}), it yields that
	\begin{equation}\label{eq:relation-normal-gph-normal-epi-g}
		\begin{aligned}
		&\mathcal{N}_{\operatorname{gph}\mathcal{N}_{\operatorname{epi}g}}((F(\bar{x}),g(F(\bar{x}))),(\bar{\mu},-1))\\
		&=\limsup_{((x,g(x)),\lambda(\mu,-1))\rightarrow((F(\bar{x}),g(F(\bar{x}))),(\bar{\mu},-1))}\widehat{\mathcal{N}}_{\operatorname{gph}\mathcal{N}_{\operatorname{epi}g}}((x,g(x)),\lambda(\mu,-1))\\
		&\supseteq\limsup_{((x,g(x)),\lambda(\mu,-1))\rightarrow((F(\bar{x}),g(F(\bar{x}))),(\bar{\mu},-1))}G((x,g(x)),\lambda(\mu,-1)):=\mathcal{G}, 
		\end{aligned}
	\end{equation}
	where
	\begin{equation*}
		\begin{aligned}
		G((x,g(x)),\lambda(\mu,-1))&:=\{((v^{*},\beta^{*}),-(v,\beta))\,|\,(v^{*},\beta^{*})\in\\
		&\lambda\nabla^{2}\langle(h'(x,\alpha)^{*})^{-1}(\mu,-1),h(\cdot,\cdot)\rangle(x,\alpha)(v,\beta)\\
		&
		+\mathcal{K}((x,g(x)),\lambda(\mu,-1))^{\circ},\,(v,\beta)\in\operatorname{dom}(\widehat{D}^{*}\mathcal{N}_{\operatorname{epi}g}((x,\alpha),\lambda(\mu,-1)))\}.
		\end{aligned}
	\end{equation*}
	According to \cite[Exercise~10.17]{Rockafellar1998} and Proposition \ref{prop-domd2g}, it follows that
	\begin{equation}\label{eq:dom-regular-coderivative-dom-graphical-derivative}
		\begin{aligned}
		-\operatorname{dom}\widehat{D}^{*}(\partial g)(x,\mu)&=\operatorname{rge}\widehat{D}^{*}\operatorname{Prox}_{g}(x+\mu)\subseteq\mathcal{N}_{(I+\partial g)(x)}(x+\mu)=\mathcal{N}_{\partial g(x)}(\mu)\\
		&=\{d\,|\,g^{\downarrow}_{-}(x,d)=\langle\mu,d\rangle\}=\operatorname{dom}D(\partial g)(x,\mu).
		\end{aligned}
	\end{equation}
	Assumption \ref{assumption-2}, \eqref{eq:dom-graphical-normal-epi-g}, \eqref{eq:relation-normal-gph-normal-epi-g}  and \eqref{eq:dom-regular-coderivative-dom-graphical-derivative} together with
	\begin{equation*}
		\limsup_{(x,\mu)\rightarrow(F(\bar{x}),\bar{\mu})}\operatorname{dom}\widehat{D}^{*}(\partial g)(x,\mu)=\operatorname{dom}D^{*}(\partial g)(F(\bar{x}),\bar{\mu})=\operatorname{aff}\{d\,|\,g^{\downarrow}_{-}(F(\bar{x}),d)=\langle\bar{\mu},d\rangle\},
	\end{equation*}
	show that
	$
		\operatorname{aff}\{d\,|\,g^{\downarrow}_{-}(F(\bar{x}),d)=\langle\bar{\mu},d\rangle\}\subseteq\limsup\limits_{(x,\mu)\rightarrow(F(\bar{x}),\bar{\mu})}\operatorname{dom}D(\partial g)(x,\mu)
	$
	and for any $\bar{v}\in\operatorname{aff}\{d\,|\,g^{\downarrow}_{-}(F(\bar{x}),d)=\langle\bar{\mu},d\rangle\}$, there exist $(\bar{v}^{*},\bar{\beta}^{*})\in\mathcal{R}^{m+1}$, $\bar{\beta}\in\mathcal{R}$ such that
	$
		(\bar{v},\bar{\beta})\in\operatorname{dom}(D^{*}\mathcal{N}_{\operatorname{epi}g}((F(\bar{x}),g(F(\bar{x}))),(\bar{\mu},-1)))$ and 	$((\bar{v}^{*},\bar{\beta}^{*}),-(\bar{v},\bar{\beta}))\in\mathcal{G}$. 
	Therefore, it follows from  \eqref{eq:relation-normal-gph-normal-epi-g} that for any $\bar{v}\in\operatorname{aff}\{d\,|\,g^{\downarrow}_{-}(F(\bar{x}),d)=\langle\bar{\mu},d\rangle\}$, there exists $\bar{\beta}\in\mathcal{R}$ such that $(\bar{v},\bar{\beta})\in\operatorname{dom}(D^{*}\mathcal{N}_{\operatorname{epi}g}((F(\bar{x}),g(F(\bar{x}))),(\bar{\mu},-1)))$. 
	
	Based on the preceding proof, we have
	\begin{align}\label{eq:inequlity-minimal-coderivative-1}
		&\min_{(\bar{z},\bar{\gamma})\in D^{*}\mathcal{N}_{\operatorname{epi}g}((F(\bar{x}),g(F(\bar{x}))),(\bar{\mu},-1))(\bar{v},\bar{\beta})}\langle(\bar{v},\bar{\beta}), (\bar{z},\bar{\gamma})\rangle\nonumber\\
		&=
		\min_{((\bar{z},\bar{\gamma}),-(\bar{v},\bar{\beta}))\in \mathcal{N}_{\operatorname{gph}\mathcal{N}_{\operatorname{epi}g}}((F(\bar{x}),g(F(\bar{x}))),(\bar{\mu},-1))}\langle(\bar{v},\bar{\beta}), (\bar{z},\bar{\gamma})\rangle\leq\min_{((\bar{v}^{*},\bar{\beta}^{*}),-(\bar{v},\bar{\beta}))\in\mathcal{G}}\langle(\bar{v}^{*},\bar{\beta}^{*}), (\bar{v},\bar{\beta})\rangle\nonumber\\
		&=\liminf_{((x,g(x)),\lambda(\mu,-1))\rightarrow((F(\bar{x}),g(F(\bar{x}))),(\bar{\mu},-1))}\ \min_{((v^{*},\beta^{*}),-(v,\beta))\in G((x,g(x)),\lambda(\mu,-1))}\langle(v^{*},\beta^{*}), (v,\beta)\rangle.
	\end{align}
	For any $\bar{v}^{*}\in D^{*}(\partial g)(F(\bar{x}),\bar{\mu})(\bar{v})$, there exist $(x,\mu)$ and $(v^{*},v)$ with $v^{*}\in\widehat{D}^{*}(\partial g)(x,\mu)(v)$ such that $(x,\mu)\rightarrow(F(\bar{x}),\bar{\mu})$ and $(v^{*},v)\rightarrow(\bar{v}^{*},\bar{v})$. By \eqref{eq:dom-regular-coderivative-dom-graphical-derivative}, Proposition \ref{prop-5} and \cite[Proposition~8.37]{Rockafellar1998}, it follows that for any $-z\in D(\partial g)(x,\mu)(-v)$,
	\begin{equation}\label{eq:inequlity-minimal-coderivative-2}
		-d^{2}g(x,\mu)(-v)=\langle -z,v\rangle\geq\langle v^{*},-v\rangle\rightarrow\langle \bar{v}^{*},-\bar{v}\rangle.
	\end{equation}
	Denote  $\mathcal{T}:=\mathcal{T}_{\operatorname{epi}g}^{2}((x,\alpha),(-v,g^{\downarrow}_{-}(x,-v)))=\{(w,\gamma)\,|\,g^{\downdownarrows}_{-}(x;-v,w)\leq\gamma\}$. Since $g$ is parabolically regular and 
	\begin{equation}\label{eq:inequlity-minimal-coderivative-3}
		\begin{aligned}
		&\min_{((v^{*},\beta^{*}),-(v,\beta))\in G((x,g(x)),\lambda(\mu,-1))}\langle(v^{*},\beta^{*}), (v,\beta)\rangle\\
		&\leq\lambda\langle(h'(x,\alpha)^{*})^{-1}(\mu,-1),h''(x,\alpha)(-(v,\beta),-(v,\beta))\rangle\\
		&=-\lambda\sup_{(w,\gamma)\in\mathcal{T}}\{\langle w,\mu\rangle-\gamma\}=\lambda\inf_{w\in\mathcal{R}^{m}}\{g^{\downdownarrows}_{-}(x;-v,w)-\langle w,\mu\rangle\}=\lambda d^{2}g(x,\mu)(-v),
		\end{aligned}
	\end{equation}
	by virtue of \eqref{eq:inequlity-minimal-coderivative-1}, \eqref{eq:inequlity-minimal-coderivative-2} and \eqref{eq:inequlity-minimal-coderivative-3}, we have 
	\begin{equation*}
		\begin{aligned}
		\min_{(\bar{z},\bar{\gamma})\in D^{*}\mathcal{N}_{\operatorname{epi}g}((F(\bar{x}),g(F(\bar{x}))),(\bar{\mu},-1))(\bar{v},\bar{\beta})}\langle(\bar{v},\bar{\beta}), (\bar{z},\bar{\gamma})\rangle&\leq\liminf_{{(x,\mu)\rightarrow(F(\bar{x}),\bar{\mu})}\atop{v\rightarrow \bar{v}}} d^{2}g(x,\mu)(-v)\\
		&\leq\min_{\bar{v}^{*}\in D^{*}(\partial g)(F(\bar{x}),\bar{\mu})(\bar{v})}\langle \bar{v}^{*},\bar{v}\rangle.
		\end{aligned}
	\end{equation*}
	This completes the proof.
\end{proof}
\begin{remark}\label{remark-equivalent-SSOSC}
    Consequently, as shown by Lemma \ref{lem-equivalent-SSOSC}, Lemma \ref{lemma-equivalent-SSOSC} and \eqref{eq:equiv-Gamma-g}, the second-order subdifferential condition in equation (71) of \cite{MordukhovichNghia2015}, with $\Theta$ replaced by  $\operatorname{epi}g$, implies SSOSC \eqref{eq:comp-prob-SSOSC}, provided that the corresponding Lagrange multiplier set $\Lambda(\bar{x})$ is a singleton and $g$ satisfies Assumptions \ref{assump-1}, \ref{assumption-2} and \ref{assumption-3}.
\end{remark}

Finally, we present the second main theorem of the equivalence between 
SSOSC together with the nondegeneracy condition and strong regularity of the KKT system at the given point when Assumptions \ref{assump-1}, \ref{assumption-2} and \ref{assumption-3} hold.
\begin{theorem}\label{thm:equiv-SSOSC-nondegeneracy-nonconvex}
    Let $(\bar{x},\bar{\mu})$ be a solution to the KKT system \eqref{eq:comp-prob-kkt}. Suppose that the function $g$ satisfies Assumption \ref{assump-1}, Assumptions \ref{assumption-2} and \ref{assumption-3} at $F(\bar{x})$ for $\bar{\mu}$. Then the following statements are equivalent to each other.\\
	{\bf(a)} Both SSOSC \eqref{eq:comp-prob-SSOSC} and the nondegeneracy condition \eqref{eq:comp-prob-nondegeneracy-cond} are satisfied at $\bar{x}$.\\
	{\bf(b)} All the elements of $\partial R(\bar{x},\bar{\mu})$ are nonsingular and $\bar{x}$ is a local minimizer of problem $(P)$.\\
	{\bf(c)} The point $(\bar{x},\bar{\mu})$ is a strongly regular point of the KKT system \eqref{eq:comp-prob-kkt} and $\bar{x}$ is a local minimizer of problem $(P)$.\\
	{\bf(d)} $R$ is a locally Lipschitz homeomorphism near  $(\bar{x},\bar{\mu})$ and $\bar{x}$ is a local minimizer of problem $(P)$.
\end{theorem}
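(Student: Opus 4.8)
The plan is to close the cycle of implications, since the forward direction is already in hand. By Theorem \ref{thm:ssosc-strong-regularity}, statement (a) implies (b), and (b) implies (c); by Lemma \ref{lem:strong-regularity}, (c) and (d) are equivalent. Hence it remains only to prove the single back-implication (c) $\Rightarrow$ (a), which I would split into first deriving the nondegeneracy condition \eqref{eq:comp-prob-nondegeneracy-cond} and then the SSOSC \eqref{eq:comp-prob-SSOSC} from strong regularity.

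First I would show that strong regularity forces the nondegeneracy condition, arguing by contraposition. Suppose \eqref{eq:comp-prob-nondegeneracy-cond} fails. Since both $F'(\bar{x})\mathcal{R}^{n}$ and $\big\{d\mid g^{\downarrow}_{-}(F(\bar{x}),d)=-g^{\downarrow}_{-}(F(\bar{x}),-d)\big\}$ are subspaces, their failure to span $\mathcal{R}^{m}$ yields a nonzero $\Delta\mu$ orthogonal to both, so that $\Delta\mu\in\mathcal{N}(F'(\bar{x})^{*})$ and $\Delta\mu$ lies in the polar of the second subspace. By Proposition \ref{prop-4} this polar equals $\operatorname{aff}\{d\mid g^{*\downarrow}_{-}(\bar{\mu},d)=\langle F(\bar{x}),d\rangle\}\cap\{d\mid\Gamma_{g^{*}}(\bar{\mu},F(\bar{x}))(d)=0\}$, whence $\Gamma_{g^{*}}(\bar{\mu},F(\bar{x}))(\Delta\mu)=0$; then \eqref{eq:prop8-result-3} of Lemma \ref{lemma-Gamma-inequality}, applied to $g$, produces $U\in\partial\operatorname{Prox}_{g}(F(\bar{x})+\bar{\mu})$ with $U\Delta\mu=0$. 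The element $E\in\partial R(\bar{x},\bar{\mu})$ built from $V=I-U$ then satisfies $E(0,\Delta\mu)=0$, exactly as in the computation in Theorem \ref{thm:ssosc-strong-regularity}. I would then promote this infinitesimal degeneracy to an actual violation of single-valuedness of the Lipschitz solution map of the canonically perturbed system, by exhibiting the branch $(\bar{x},\bar{\mu}+t\Delta\mu)$ as a second solution of the linearized equation \eqref{eq:comp-prob-kkt-linearize} at $\delta=0$ for small $t$, using $U\Delta\mu=0$ and the Moreau identity to keep $\bar{\mu}+t\Delta\mu\in\partial g(F(\bar{x}))$. This contradicts strong regularity, so (c) implies nondegeneracy.

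With nondegeneracy established, the multiplier set $\Lambda(\bar{x})$ is a singleton, since nondegeneracy implies the SRCQ \eqref{eq:srcq} (the nondegeneracy subspace is contained in $\{d\mid g^{\downarrow}_{-}(F(\bar{x}),d)=\langle\bar{\mu},d\rangle\}$) and hence dual uniqueness. Thus Assumption \ref{assump-1} together with nondegeneracy places us in the setting of Theorem 5.6 of \cite{MordukhovichNghia2015} with $\Theta=\operatorname{epi}g$, which gives the equivalence of strong regularity with the second-order subdifferential condition (71) of \cite{MordukhovichNghia2015}; hence (c) yields that condition. Finally, invoking Remark \ref{remark-equivalent-SSOSC}, which rests on Lemma \ref{lem-equivalent-SSOSC}, Lemma \ref{lemma-equivalent-SSOSC}, the reformulation \eqref{eq:equiv-Gamma-g}, and Assumptions \ref{assump-1}--\ref{assumption-3}, this second-order subdifferential condition implies the SSOSC \eqref{eq:comp-prob-SSOSC}. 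Combining the two halves gives (c) $\Rightarrow$ (a), closing the cycle $(a)\Rightarrow(b)\Rightarrow(c)\Leftrightarrow(d)\Rightarrow(a)$.

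The main obstacle I anticipate is the step (c) $\Rightarrow$ nondegeneracy: converting the singular element $E\in\partial R(\bar{x},\bar{\mu})$ into a genuine contradiction with the local homeomorphism property. A singular Clarke–Jacobian element does not by itself preclude strong regularity, so one must check carefully, using $U\Delta\mu=0$, the proximal/Moreau identities, and the convexity of $\partial g(F(\bar{x}))$, that $\bar{\mu}+t\Delta\mu$ genuinely remains a KKT multiplier and therefore generates a true second solution branch. A secondary technical point is matching the second-order subdifferential condition (71) of \cite{MordukhovichNghia2015} with our $\Gamma_{g}$-based SSOSC; this is precisely the bridge that Lemmas \ref{lem-equivalent-SSOSC}--\ref{lemma-equivalent-SSOSC} and Assumptions \ref{assumption-2}--\ref{assumption-3} are designed to supply.
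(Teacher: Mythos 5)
Your handling of $(a)\Rightarrow(b)\Rightarrow(c)\Leftrightarrow(d)$ and of the SSOSC half of $(c)\Rightarrow(a)$ coincides with the paper, which also closes the cycle by applying Theorem 5.6 of \cite{MordukhovichNghia2015} to the lifted problem \eqref{general-prob-equivalent} at $((\bar{x},g(F(\bar{x}))),(\bar{\mu},-1))$ and then invoking Remark \ref{remark-equivalent-SSOSC} (i.e.\ Lemmas \ref{lem-equivalent-SSOSC} and \ref{lemma-equivalent-SSOSC} together with \eqref{eq:equiv-Gamma-g}) to pass from the second-order subdifferential condition (71) to the SSOSC \eqref{eq:comp-prob-SSOSC}. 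The genuine gap is in your separate, self-contained derivation of nondegeneracy from strong regularity. Your construction yields a nonzero $\Delta\mu\in\mathcal{N}(F'(\bar{x})^{*})$ lying in the orthogonal complement of the lineality space of $g^{\downarrow}_{-}(F(\bar{x}),\cdot)$, which is $\operatorname{span}\big(\partial g(F(\bar{x}))-\bar{\mu}\big)$, together with one $U\in\partial\operatorname{Prox}_{g}(F(\bar{x})+\bar{\mu})$ with $U\Delta\mu=0$. Neither fact gives $\bar{\mu}+t\Delta\mu\in\partial g(F(\bar{x}))$ for small $t$: that membership requires $\Delta\mu$ to be a feasible direction of the convex set $\partial g(F(\bar{x}))$ at $\bar{\mu}$, whereas you only control its affine-hull direction. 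If $\bar{\mu}$ lies on the relative boundary of $\partial g(F(\bar{x}))$ and $\Delta\mu$ is tangent to that boundary (e.g.\ $\partial g(F(\bar{x}))$ a two-dimensional disk and $\Delta\mu$ tangent to the bounding circle at $\bar{\mu}$), then $\bar{\mu}+t\Delta\mu\notin\partial g(F(\bar{x}))$ for every $t\neq 0$, and no second solution branch of \eqref{eq:comp-prob-kkt-linearize} at $\delta=0$ appears. The relation $U\Delta\mu=0$ for a single Clarke--Jacobian element, being a statement about limits of Jacobians at nearby differentiability points, says nothing about $\operatorname{Prox}_{g}$ along the specific ray $F(\bar{x})+\bar{\mu}+t\Delta\mu$ at finite $t$, and the Moreau identity does not repair this. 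You flag exactly this as the anticipated obstacle, but it is not a detail to be checked: as written the branch need not exist, so this half of $(c)\Rightarrow(a)$ is unproved.

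The gap is load-bearing in your architecture, because you use nondegeneracy as the entry ticket to Theorem 5.6 of \cite{MordukhovichNghia2015}. The paper instead reads \emph{both} the nondegeneracy condition and the second-order subdifferential condition out of that theorem applied at the lifted KKT point, so it never proves strong regularity $\Rightarrow$ nondegeneracy separately. If you want a direct argument, the known route (cf.\ the nonlinear SDP case in \cite{Sun2006}) perturbs the canonical right-hand side $\delta_{2}\neq 0$ so that the perturbed problem admits two distinct multipliers, rather than keeping the primal data fixed and translating $\bar{\mu}$; that is a substantially longer argument than the one you sketch. Either adopt the paper's citation-based route for nondegeneracy as well, or carry out that perturbation argument in full.
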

\begin{proof}
	Given $d\in\mathcal{C}(\bar{x})$ and $\mu\in\Lambda(\bar{x})$, it follows from Propositions \ref{prop-domd2g}, \ref{prop-5}, Lemma \ref{lem-equivalent-SSOSC} and \cite[Theorem~13.57]{Rockafellar1998} that $\varphi_{g}(F(\bar
    x),\mu)(F'(\bar{x})d)=d^{2}g(F(\bar{x}),\mu)(F'(\bar{x})d)\geq\Gamma_{g}(F(\bar{x}),\mu)(F'(\bar{x})d)$.
    Therefore, if SSOSC  \eqref{eq:comp-prob-SSOSC} is satisfied, it implies the fulfillment of SOSC \eqref{eq:SOSC}. Consequently, according to \cite[Theorem~6.1]{Mohammadi2020}, $\bar{x}$ can be confirmed as a local minimizer of problem $(P)$. Through the integration of  Proposition \ref{prop-domd2g}, Theorem \ref{thm:ssosc-strong-regularity}, Lemma \ref{lem:strong-regularity}, and Clarke's inverse function theorem \cite{Clarke1976}, the implications  {(a)}$\Rightarrow${(b)}$\Rightarrow${(c)}$\Leftrightarrow${(d)} are derived. Moreover, strong regularity of the KKT system of problem \eqref{general-prob-equivalent} at $((\bar{x},g(F(\bar{x}))),(\bar{\mu},-1))$ for the origin is equivalent to the solution mapping $S$ of the linearized generalized equation 
	\begin{equation*}\label{eq-linearized-kkt}
	\delta'\in\left[\begin{array}{c}
		\langle F''(\bar{x})(x-\bar{x}),\bar{\mu}\rangle+F'(\bar{x})^{*}(\mu-\bar{\mu})\\
		\gamma+1\\
		-(F(\bar{x})+F'(\bar{x})(x-\bar{x}),c)
	\end{array}\right]+\left[\begin{array}{c}
	0\\0\\
	(\mathcal{N}_{\operatorname{epi}g})^{-1}(\mu,\gamma)
\end{array}\right],
	\end{equation*}	
	where $\delta'=(\delta_{1},\delta'_{1},(\delta_{2},\delta'_{2}))$, being single-valued Lipschitz continuous around the origin. For $\delta'$ with $\|\delta'\|$ sufficiently small and  $\delta'_{1}<1$,  it follows from \cite[Theorem~8.9]{Rockafellar1998} that
	\begin{equation*}
		\begin{aligned}
		S(\delta')=\{((x,c),(\mu,\gamma))\;|\; (x,\tilde{\mu})\in Z(\delta_{1}/(1-\delta'_{1}),\delta_{2}),\; \mu=(1-\delta'_{1})\tilde{\mu},\\
	c=g(F(\bar{x})+F'(\bar{x})(x-\bar{x})+\delta_{2})-\delta'_{2},\;\gamma=-1+\delta'_{1}	\},
		\end{aligned}
	\end{equation*} 
	where $Z$ is the solution mapping of the system \eqref{eq:comp-prob-kkt-linearize} with $\bar{\mu
    }$ substituted by $\bar{\mu}/(1-\delta'_{1})$. For a sufficiently small $\delta'_{1}$ in its absolute value, Lemma \ref{lem:strong-regularity} ensures that if $(\bar{x},\bar{\mu})$ is a strongly regular point of the KKT system \eqref{eq:comp-prob-kkt} then  $(\bar{x},\bar{\mu}/(1-\delta'_{1}))$ is also a strongly regular point of the same system.  Therefore, 
	the statement {(c)} is equivalent to strong regularity of the KKT system for problem \eqref{general-prob-equivalent} at the point  $((\bar{x},g(F(\bar{x})),(\bar{\mu},-1))$. As noted in Remark \ref{remark-equivalent-SSOSC}, the second-order subdifferential condition given in equation (71) of \cite{MordukhovichNghia2015} implies SSOSC \eqref{eq:comp-prob-SSOSC}.
	By applying \cite[Theorem~5.6]{MordukhovichNghia2015} to problem \eqref{general-prob-equivalent}, we establish the implication  {(c)}$\Rightarrow${(a)}, relying on the equivalence of  nondegeneracy conditions between problems $(P)$ and \eqref{general-prob-equivalent}. Thus, the desired results are obtained.
\end{proof}
\section{Conclusions and future research}
\label{sec:Conclusion}
In this paper, we present a comprehensive perturbation analysis for a class of composite optimization problems. We construct a kind of second-order variational function and then provide a formal definition of SSOSC applicable to general optimization problems. Our research results establish that SSOSC together with the nondegeneracy condition, the nonsingularity of Clarke's generalized Jacobian of the KKT system, and strong regularity of the KKT point are equivalent under certain mild assumptions. These findings provide a robust foundation for further research in optimization, both from computational and theoretical perspectives.

Our future research will focus on three interconnected directions in composite optimization. Firstly, we aim to identify new classes of nonpolyhedral problems where Assumptions 2.2 and 2.3 can be explicitly characterized using the initial data. Secondly, we will investigate the relaxation of nondegeneracy assumptions currently required for establishing key equivalences in the field. Finally, we plan to develop second-order numerical algorithms with provable local convergence rates that do not rely on nondegeneracy conditions.

\section*{Acknowledgement}
We would like to thank the handling editors Professors Jos\'{e} Carrillo and Fran\c{c}oise Tisseur, and anonymous referees for their valuable suggestions and insightful comments, which allowed us to greatly improve the quality of the manuscript.

\begin{appendices}	
	\section{Sufficient conditions for Assumptions \ref{assumption-2} and \ref{assumption-3}}\label{Appendix-A}
	Verifying the conditions in Assumptions \ref{assumption-2} and \ref{assumption-3} is crucial, as they involve the limiting/Mordukhovich coderivative of the proximal mapping. To facilitate this, we propose alternative sufficient conditions that are more readily verifiable. This appendix is dedicated to establishing these conditions.
	\begin{proposition}\label{prop-assump-sufficient-cond}
		Let $r:\mathcal{R}^{m}\rightarrow (-\infty,+\infty]$ be a proper l.s.c. convex function with $\bar{x}\in\operatorname{dom}r$ and $\bar{u}\in\partial r(\bar{x})$. Suppose that the function $r$ is parabolically epi-differentiable at $\bar{x}$ for every $w\in\{d\, |\, r^{\downarrow}_{-}(\bar{x},d)=\langle \bar{u},d\rangle\}$ and parabolically regular at $\bar{x}$ for $\bar{u}$. Then we have the following statements.\\
		{\bf(a)} The equality 
		\begin{equation}\label{assump-2-first-cond}
			\operatorname{dom}D^{*}(\partial r)(\bar{x},\bar{u})=\operatorname{aff}\{d\,|\,r^{\downarrow}_{-}(\bar{x},d)=\langle\bar{u},d\rangle\}
		\end{equation}
		holds if and only if that $\operatorname{rge}(D^{*}\operatorname{Prox}_{r}(\bar{x}+\bar{u}))$
		is a linear subspace. Moreover, the condition \eqref{assump-2-first-cond} is valid if there exists $W\in\partial_{B}\operatorname{Prox}_{r}(\bar{x}+\bar{u})$ such that $\operatorname{rge}W=\mathop{\cup}\limits_{U\in\partial\operatorname{Prox}_{r}(\bar{x}+\bar{u})}\operatorname{rge}U$. \\
		{\bf(b)}
		The condition $\{d\, |\, 0\in D^{*}\operatorname{Prox}_{r}(\bar{x}+\bar{u})(d)\}=\operatorname{aff}\{d\, |\, 0\in D\operatorname{Prox}_{r}(\bar{x}+\bar{u})(d)\}$ is equivalent to $(\operatorname{lin}(\operatorname{rge}D\operatorname{Prox}_{r}(\bar{x}+\bar{u})))^{\perp}=\mathop{\cup}\limits_{U\in\partial \operatorname{Prox}_{r}(\bar{x}+\bar{u})}\operatorname{NULL}(U)$.\\
		{\bf(c)} For any given $v\in\operatorname{rge}(D^{*}\operatorname{Prox}_{r}(\bar{x}+\bar{u}))$, if there exist $W'\in\partial_{B}\operatorname{Prox}_{r}(\bar{x}+\bar{u})$ and $d'\in\mathcal{R}^{m}$ with $v=W'd'$ such that
		$
		\langle v,d'-v\rangle=\min\limits_{{d\in\mathcal{R}^{m},v=Ud}\atop{U\in\partial \operatorname{Prox}_{r}(\bar{x}+\bar{u})}}\langle v,d-v\rangle$,
		then Assumption \ref{assumption-3} holds.
	\end{proposition}
	\begin{proof}
		By applying \eqref{eq:inverse-of-DS}, \eqref{eq:inverse-of-DS&DS*}, and   \cite[Theorem~13.57]{Rockafellar1998}, it follows that
		\begin{equation*}
		\operatorname{dom}D(\partial r)(\bar{x},\bar{u})=\operatorname{rge}(D\operatorname{Prox}_{r}(\bar{x}+\bar{u}))\subseteq\operatorname{dom}D^{*}(\partial r)(\bar{x},\bar{u})=-\operatorname{rge}(D^{*}\operatorname{Prox}_{r}(\bar{x}+\bar{u})).
		\end{equation*}
		To establish the equivalence in {(a)}, it suffices to ‌prove the necessity‌.
		Combining Proposition \ref{prop-domd2g} with \cite[Theorem~13.52]{Rockafellar1998} yields the following inclusion relationship
	\begin{equation*}
        \begin{aligned}        
			\{d\, |\, r^{\downarrow}_{-}(\bar{x},d)=\langle\bar{u},d\rangle\}&\subseteq-\operatorname{rge}(D^{*}\operatorname{Prox}_{r}(\bar{x}+\bar{u}))\subseteq\mathop{\cup}_{U\in\partial\operatorname{Prox}_{r}(\bar{x}+\bar{u})}\operatorname{rge}U.
        \end{aligned}
	\end{equation*}
		Consequently, if $\operatorname{rge}(D^{*}\operatorname{Prox}_{r}(\bar{x}+\bar{u}))$
		is a linear subspace, the above inclusion relation implies
		\begin{align*}
			\operatorname{aff}\{d\,|\,r^{\downarrow}_{-}(\bar{x},d)=\langle\bar{u},d\rangle\}\subseteq\operatorname{rge}(D^{*}\operatorname{Prox}_{r}(\bar{x}+\bar{u}))\subseteq\operatorname{conv}\mathop{\cup}_{U\in\partial\operatorname{Prox}_{r}(\bar{x}+\bar{u})}\operatorname{rge}U.
		\end{align*}
		Lemma \ref{lemma-range-partial-derivative}  ensures the validity of  \eqref{assump-2-first-cond}. 
		Thus, given the existence of  $W\in\partial_{B}\operatorname{Prox}_{r}(\bar{x}+\bar{u})$ with  $\operatorname{rge}W=\mathop{\cup}\limits_{U\in\partial\operatorname{Prox}_{r}(\bar{x}+\bar{u})}\operatorname{rge}U$, \cite[Proposition~2.4]{Gfrerer2022} indicates that  
		$\mathop{\cup}\limits_{U\in\partial\operatorname{Prox}_{r}(\bar{x}+\bar{u})}\operatorname{rge}U=\operatorname{reg}W\subseteq\operatorname{rge}(D^{*}\operatorname{Prox}_{r}(\bar{x}+\bar{u}))\subseteq\mathop{\cup}\limits_{U\in\partial\operatorname{Prox}_{r}(\bar{x}+\bar{u})}\operatorname{rge}U
		$. Consequently, $\operatorname{rge}(D^{*}\operatorname{Prox}_{r}(\bar{x}+\bar{u}))$ is a linear subspace, which implies \eqref{assump-2-first-cond} from the previous argument.
		
		To prove the statement in part  {(b)}, we invoke  Propositions \ref{prop-2} and \ref{prop-5} to obtain 
		\begin{equation*}
			\operatorname{aff}\{d\, |\, 0\in D\operatorname{Prox}_{r}(\bar{x}+\bar{u})(d)\}=(\operatorname{lin}(\operatorname{rge}(D\operatorname{Prox}_{r}(\bar{x}+\bar{u}))))^{\perp}.
		\end{equation*}
		Combining this with Corollary \ref{corollary-2} establishes the equivalence claimed in {(b)}.
		
		Now we proceed to show the assertion in {(c)}. Given $v\in\operatorname{rge}(D^{*}\operatorname{Prox}_{r}(\bar{x}+\bar{u}))$, it yields from \cite[Proposition~2.4]{Gfrerer2022} that 
		\begin{align*}
			\{d\in\mathcal{R}^{m}\,|\,v=Ud,\,U\in\partial_{B}\operatorname{Prox}_{r}(\bar{x}+\bar{u})\}\subseteq
			\{d\in\mathcal{R}^{m}\,|\,v\in D^{*}\operatorname{Prox}_{r}(\bar{x}+\bar{u})(d)\},
		\end{align*}
		which confirms the validity of  Assumption \ref{assumption-3}.
	\end{proof}
	
	\section{The Validation of Assumptions \ref{assumption-2} and \ref{assumption-3}}\label{Appendix-B}
	Let $X\in S^{m}_{+}$ and $U\in\mathcal{N}_{S^{m}_{+}}(X)$. In this appendix, we demonstrate that Assumptions \ref{assumption-2} and \ref{assumption-3} at $X$ for $U$ are valid when $g$ is the  indicator function of the positive semidefinite cone.
	Denote $S^{m}$ as the space of $m\times m$ symmetric matrices, $S^{m}_{+}$ as
	the cone of all positive semidefinite matrices in $S^{m}$. For any $A,B\in S^{m}$, define Frobenius inner product by
	$\langle A,B\rangle:=\operatorname{Tr}(AB)$, 
	where ``Tr'' denotes the trace of a matrix. Given $X\in S^{m}_{+}$, $U\in\mathcal{N}_{S^{m}_{+}}(X)$ and $A:=X+U$, let $\lambda_{i}(A)$ denote the eigenvalues of $A$ for $i=1,\ldots,m$ and $\Lambda$ be the diagonal matrix of eigenvalues of $A$. The spectral decomposition of $A$ is 
	\begin{equation*}
			A = P\left(\begin{array}{ccc}\Lambda_{\alpha} & 0 & 0 \\						0 & \Lambda_{\beta} & 0\\
			0 & 0 & \Lambda_{\gamma}\end{array}\right)P^{*},
	\end{equation*}
	where $\alpha:=\{i\,|\, \lambda_{i}(A)>0\}$, $\beta:=\{i\,|\,\lambda_{i}(A)=0\}$, $\gamma:=\{i\,|\,\lambda_{i}(A)<0\}$, and $P$ is the corresponding orthogonal matrix of orthonormal eigenvectors.
	\subsection{The validation of Assumption \ref{assumption-2}}
	Since the first equality of Assumption \ref{assumption-2} has been established in \cite[Lemma~6.3]{MordukhovichNghia2015}, we focus solely on proving the second equality.
	For any $W\in\partial\operatorname{\Pi}_{S^{m}_{+}}(A)\ (\textrm{or } W\in\partial_{B}\operatorname{\Pi}_{S^{m}_{+}}(A))$, and for any $D\in S^{m}$ with $V=W(D)$, \cite[Proposition~2.2]{Sun2006} ensures the existence of 
	$Z\in\partial\operatorname{\Pi}_{S^{|\beta|}_{+}}(0)\ (\textrm{or } Z\in\partial_{B}\operatorname{\Pi}_{S^{|\beta|}_{+}}(0))$ such that
	\begin{equation}\label{eq:partial-proximal-indicator-sd}
		V=P\widetilde{V}P^{*},\;\widetilde{V}=\left(\begin{array}{ccc}
			\widetilde{D}_{\alpha\alpha} & \widetilde{D}_{\alpha\beta} & \operatorname{\Sigma}_{\alpha\gamma}\circ\widetilde{D}_{\alpha\gamma}\\
			\widetilde{D}_{\beta\alpha} & Z(\widetilde{D}_{\beta\beta}) & 0\\
			\operatorname{\Sigma}_{\gamma\alpha}\circ\widetilde{D}_{\gamma\alpha} & 0 & 0
		\end{array}\right),
	\end{equation}
	where ``$\circ$'' denotes the Hadamard product, the matrix $\operatorname{\Sigma}$ is defined by
	\begin{equation*}
		\operatorname{\Sigma}_{ij}:=\frac{\max\{\lambda_{i}(A),0\}+\max\{\lambda_{j}(A),0\}}{|\lambda_{i}(A)|+|\lambda_{j}(A)|},\; i,j=1,2,\ldots,m
	\end{equation*}
	with the convention that $0/0:=1$.
	
	By picking $\alpha'=\beta$ in \cite[Lemma~11]{Pang2003}, we can choose $\widehat{Z}\in\partial_{B}\operatorname{\Pi}_{S^{|\beta|}_{+}}(0)$ such that $\widetilde{V}_{\beta\beta}=\widehat{Z}(\widetilde{D}_{\beta\beta})=\widetilde{D}_{\beta\beta}$. Therefore, there exists $W'\in\partial_{B}\operatorname{\Pi}_{S^{m}_{+}}(A)$ such that $\operatorname{rge}W'=\mathop{\cup}\limits_{W\in\partial\operatorname{\Pi}_{S^{m}_{+}}(A)}\operatorname{rge}W$. The first condition of Assumption \ref{assumption-2} holds by applying Proposition \ref{prop-assump-sufficient-cond}(a). 
    
    Now we proceed to prove the validity of the other condition in Assumption \ref{assumption-2}. It follows from \cite[Example~3.140]{Bonnans2000}, Propositions \ref{prop-domd2g}, \ref{prop-5} and \cite{Sun2006} that $\operatorname{\Pi}_{S^{m}_{+}}$ is directionally differentiable at $A$ and
	the explicit expression of the directional derivative $\operatorname{\Pi}_{S^{m}_{+}}'(A,D)$ 
	takes the following form
		\begin{equation*}
			\operatorname{\Pi}_{S^{m}_{+}}'(A,D)=P\left(\begin{array}{ccc}
				\widetilde{D}_{\alpha\alpha} & \widetilde{D}_{\alpha\beta} & \operatorname{\Sigma}_{\alpha\gamma}\circ\widetilde{D}_{\alpha\gamma}\\
				\widetilde{D}_{\beta\alpha} & \operatorname{\Pi}_{S^{|\beta|}_{+}}(\widetilde{D}_{\beta\beta}) & 0\\
				\operatorname{\Sigma}_{\gamma\alpha}\circ\widetilde{D}_{\gamma\alpha} & 0 & 0
			\end{array}\right)P^{*}
		\end{equation*}
	with  $\widetilde{D}=P^{*}DP$ and $D\operatorname{\Pi}_{S^{m}_{+}}(A)(D)=\{\operatorname{\Pi}_{S^{m}_{+}}'(A,D)\}$.
	This ensures that
	\begin{equation*}
	\operatorname{lin}(\operatorname{rge}D\operatorname{\Pi}_{S^{m}_{+}}(A))=\{V=P\widetilde{V}P^{*}\,|\,\widetilde{V}_{\beta\beta}=0,\widetilde{V}_{\beta\gamma}=0,\widetilde{V}_{\gamma\gamma}=0\}.
	\end{equation*}
	Consequently, take the polar operation on both sides and obtain 
	\begin{equation*}
		(\operatorname{lin}(\operatorname{rge}D\operatorname{\Pi}_{S^{m}_{+}}(A)))^{\perp}=\{V=P\widetilde{V}P^{*}\,|\,\widetilde{V}_{\alpha\alpha}=0,\widetilde{V}_{\alpha\beta}=0,\widetilde{V}_{\alpha\gamma}=0\}.
	\end{equation*}
	By virtue of \cite[Lemma~11]{Pang2003}, it confirms the existence of $\widehat{W}$ with
	\begin{equation*}
		\widehat{W}(D)=P\widetilde{W}P^{*},\;\widetilde{W}=\left(\begin{array}{ccc}
			\widetilde{D}_{\alpha\alpha} & \widetilde{D}_{\alpha\beta} & \operatorname{\Sigma}_{\alpha\gamma}\circ\widetilde{D}_{\alpha\gamma}\\
			\widetilde{D}_{\beta\alpha} & 0 & 0\\
			\operatorname{\Sigma}_{\gamma\alpha}\circ\widetilde{D}_{\gamma\alpha} & 0 & 0
		\end{array}\right),\,\forall\, D\in S^{m},
	\end{equation*}
	such that
	$
	\mathop{\cup}\limits_{W\in\partial\operatorname{\Pi}_{S^{m}_{+}}(A)}\operatorname{NULL}(W)=\operatorname{NULL}(\widehat{W})=(\operatorname{lin}(\operatorname{rge}D\operatorname{\Pi}_{S^{m}_{+}}(A)))^{\perp}$.
	Therefore the second equality of Assumption \ref{assumption-2} follows directly by Proposition \ref{prop-assump-sufficient-cond}(b).
	
	\subsection{The validation of Assumption \ref{assumption-3}}	
	Given $X\in S^{m}_{+}$ and $U\in\mathcal{N}_{S^{m}_{+}}(X)$ with $A:=X+U$, we demonstrate that Assumption \ref{assumption-3} holds for $\delta_{S^{m}_{+}}(\cdot)$ at $X$ for $U$. For any $V\in\operatorname{rge}(D^{*}\operatorname{\Pi}_{S^{m}_{+}}(A))$, there exist $W\in\partial\operatorname{\Pi}_{S^{m}_{+}}(A)$ and $D\in S^{m}$ such that $V=W(D)$.
	Applying  Proposition \ref{prop-partial-proximal-mapping} together with  \eqref{eq:partial-proximal-indicator-sd}, we have 
	\begin{equation}\label{eq:sdp-inequality}
    \begin{aligned}
		\langle V,D-V\rangle&=\langle\widetilde{V},\widetilde{D}-\widetilde{V}\rangle=2\langle \widetilde{V}_{\alpha\gamma},\widetilde{D}_{\alpha\gamma}-\widetilde{V}_{\alpha\gamma}\rangle+\langle Z(\widetilde{D}_{\beta\beta}),\widetilde{D}_{\beta\beta}-Z(\widetilde{D}_{\beta\beta})\rangle\\
		&\geq2\langle \widetilde{V}_{\alpha\gamma},\widetilde{D}_{\alpha\gamma}-\widetilde{V}_{\alpha\gamma}\rangle=-2\sum\limits_{i\in\alpha,j\in\gamma}\frac{\lambda_{j}}{\lambda_{i}}(\widetilde{V}_{ij})^{2}.
        \end{aligned}
	\end{equation}
	By selecting $\alpha'=\beta$ in \cite[Lemma~11]{Pang2003} and choosing  $\widehat{Z}\in\partial_{B}\operatorname{\Pi}_{S^{|\beta|}_{+}}(0)$ such that $\widetilde{V}_{\beta\beta}=\widehat{Z}(\widetilde{D}_{\beta\beta})=\widetilde{D}_{\beta\beta}$, the inequality in \eqref{eq:sdp-inequality} becomes an  equality. Therefore for any $V\in\operatorname{rge}(D^{*}\operatorname{\Pi}_{S^{m}_{+}}(A))$, there exist $W'\in\partial_{B}\operatorname{\Pi}_{S^{m}_{+}}(A)$ and $D'\in S^{m}$ satisfying $V=W'(D')$ such that
	\begin{equation*}
	\langle V,D'-V\rangle=\min\limits_{{D\in S^{m},V=U(D)}\atop{U\in\partial \operatorname{\Pi}_{S^{m}_{+}}(A)}}\langle V,D-V\rangle.
	\end{equation*}
	Finally Proposition \ref{prop-assump-sufficient-cond}(c) guarantees the validity of Assumption \ref{assumption-3}.
		
\end{appendices}	

\bibliographystyle{amsplain}
\bibliography{ref}

%%
%\begin{acknowledgements}
%Acknowledgements belong here.
%\end{acknowledgements}

\end{document}